\newcommand{\arxiv}[1]{\href{http://arxiv.org/pdf/#1}{arXiv:#1}}
\DeclareMathAlphabet{\mathbbe}{U}{bbold}{m}{n}
\newcommand{\simplexcategory}{\mathbbe{\Delta}}
\newcommand{\newnabla}{\rotatebox[origin=c]{180}{\(\simplexcategory\)}}
\DeclareMathAlphabet{\mathbfsf}{\encodingdefault}{\sfdefault}{bx}{n}
\newcommand{\ds}[1]{\mathbfsf{#1}}
\renewenvironment{abstract}{%
  \ifx\maketitle\relax
    \ClassWarning{\@classname}{Abstract should precede
      \protect\maketitle\space in AMS document classes; reported}%
  \fi
  \global\setbox\abstractbox=\vtop \bgroup
    \normalfont\Small
    \list{}{\labelwidth\z@
      \leftmargin2pc \rightmargin\leftmargin
      \listparindent\normalparindent \itemindent\z@
      \parsep\z@ \@plus\p@
      
    }%
    \item[\hskip\labelsep\scshape\abstractname.]%
}{%
  \endlist\egroup
  \ifx\@setabstract\relax \@setabstracta \fi
}
\def\@setabstract{\@setabstracta \global\let\@setabstract\relax}
\def\@setabstracta{%
  \ifvoid\abstractbox
  \else
    \skip@20\p@ \advance\skip@-\lastskip
    \advance\skip@-\baselineskip \vskip\skip@
    \box\abstractbox
    \prevdepth\z@ % because \abstractbox is a vtop
  \fi
}
\newcommand{\onedot}{
  \bsegment
    \move (0 0) \fcir f:0 r:2
  \esegment
}
\newcommand{\dlpullback}[1][dl]{\save*!/#1-4ex/#1:(-1,1)@^{|-}\restore}
\newcommand{\urpullback}[1][ur]{\save*!/#1-4ex/#1:(-1,1)@^{|-}\restore}
\newcommand{\drpullback}[1][dr]{\save*!/#1-4ex/#1:(-1,1)@^{|-}\restore}
\newcommand{\genmap}{\rightarrow\Mapsfromchar}
\newcommand{\Map}{\operatorname{Map}}
\newcommand{\iso}{\mathrm{iso}}
\newcommand{\un}{\underline}
\newcommand{\unDelta}{\un\simplexcategory}
\newcommand{\Deltagen}{\simplexcategory_{\text{\rm gen}}}
\newcommand{\culf}{CULF\xspace}
\newcommand{\Dec}{\operatorname{Dec}}
\providecommand{\norm}[1]{\left| {#1}\right|}
\def\into{\hookrightarrow}
\providecommand{\kat}[1]{\text{\textbf{\textsl{#1}}}}
\newcommand{\rat}{\rightarrowtail}
\newcommand{\lat}{\leftarrowtail}
\newcommand{\upper}{\mathrm{upper}}
\newcommand{\low}{\mathrm{lower}}
\newcommand{\upperstar}{^{\raisebox{-0.25ex}[0ex][0ex]{\(\ast\)}}}
\newcommand{\lowershriek}{_!}
\newcommand{\isopil}{\stackrel{\raisebox{0.1ex}[0ex][0ex]{\(\sim\)}}%
			{\raisebox{-0.15ex}[0.28ex]{\(\rightarrow\)}}}
\newcommand{\tensor}{\otimes}
\newcommand{\op}{^{\text{{\rm{op}}}}}
\newcommand{\Set}{\kat{Set}}
\newcommand{\Grpd}{\kat{Grpd}}
\newcommand{\fatnerve}{\mathbf{N}}
\newcommand{\Q}{\mathbb{Q}}
\newcommand{\F}{\mathbb{F}}
\newcommand{\G}{\mathbb{G}}
\newcommand{\C}{\mathbb{C}}
\newcommand{\D}{\mathbb{D}}
\newcommand{\I}{\mathbb{I}}
\renewcommand{\P}{\mathbb{P}}
\newcommand{\R}{\mathbb{R}}
\newcommand{\CC}{\mathscr{C}}
\newcommand{\comma}{\raisebox{1pt}{$\downarrow$}}
\newcommand{\name}[1]{\ulcorner #1\urcorner}
\newcommand{\Hom}{\operatorname{Hom}}
\newcommand{\Fun}{\operatorname{Fun}}
\newcommand{\Aut}{\operatorname{Aut}}
\newcommand{\id}{\operatorname{id}}
\newcommand{\Ar}{\operatorname{Ar}}
\let\sv@thm\@thm
\def\@thm{\let\indent\relax\sv@thm}
\renewenvironment{proof}[1][\proofname]{\par
  \pushQED{\qed}%
  \normalfont \topsep6\p@\@plus6\p@\relax
  \trivlist
  \itemindent\z@ % original has \normalparindent
  \item[\hskip\labelsep
        \scshape
    #1\@addpunct{.}]\ignorespaces
}{%
  \popQED\endtrivlist\@endpefalse
}
\newtheorem{lemma}{\bf Lemma}[section]
\newtheorem{prop}[lemma]{\bf Proposition}
\newtheorem{thm}[lemma]{\bf Theorem}
\newtheorem{theorem}[lemma]{\bf Theorem}
\newtheorem{cor}[lemma]{\bf Corollary}
\theoremstyle{definition}
\newtheorem{taller}[lemma]{$\!\!$}
\newenvironment{blanko}[1]%
{\begin{taller}{\normalfont\bfseries  #1}\normalfont}%
{\end{taller}}
\newenvironment{proof*}[1]{\begin{list}{\em #1 }%
{\setlength{\labelsep}{0mm}\setlength{\leftmargin}{0mm}%
\setlength{\labelwidth}{0mm}\setlength{\listparindent}{\parindent}%
\setlength{\parsep}{\parskip}\setlength{\partopsep}{0mm}}%
\item}{\qed\end{list}}
\thanks{%
  The first author %IMMA:
  was partially supported by grants 
  MTM2012-38122-C03-01,  % Casanelles
  2014-SGR-634,          % Alberich  
  MTM2013-42178-P,       % Casacuberta
  MTM2015-69135-P, and    % Casanelles
  MTM2016-76453-C2-2-P,  % Casacuberta
  the second author %JOACHIM:
  by 
  MTM2013-42293-P and % Castellana-Kock
  MTM2016-80439-P
  and the third author %ANDY:
  by
  MTM2013-42178-P and
  MTM2016-76453-C2-2-P}
\author{Imma G\'alvez-Carrillo}
\address{Departament de Matem\`atiques
      \\Universitat Polit\`ecnica de Catalunya
      \\Escola d'Enginyeria de Barcelona Est (EEBE) 
      \\Carrer Eduard Maristany 10-14\\08019 Barcelona\\Spain}
\email{m.immaculada.galvez@upc.edu}
\author{Joachim Kock}
\address{Departament de Matem\`atiques
       \\Universitat Aut\`onoma de Barcelona
       \\08193 Bellaterra (Barcelona), Spain}
\email{kock@mat.uab.cat}
\author{Andrew Tonks}
\address{Department of Mathematics\\ 
University of Leicester\\ 
University Road\\ 
Leicester LE1 7RH, UK}
\email{apt12@le.ac.uk}
\title{Decomposition spaces and restriction species}
\begin{document}

\begin{abstract}
  We show that Schmitt's restriction species (such as graphs, matroids,
  posets, etc.)\  naturally induce decomposition spaces (a.k.a.~unital
  $2$-Segal spaces), and that their associated coalgebras are an instance
  of the general construction of incidence coalgebras of decomposition
  spaces.  We introduce the notion of {\em directed restriction species}
  that subsume Schmitt's restriction species and also induce
  decomposition spaces. Whereas ordinary restriction species are presheaves
  on the category of finite sets and injections, directed restriction
  species are presheaves on the category of finite posets and convex maps.
  We also introduce the notion of {\em monoidal (directed)
  restriction species}, which induce monoidal decomposition spaces and
  hence bialgebras, most often Hopf algebras. Examples of this notion
  include rooted forests, directed graphs, posets, double posets, and many
  related structures.  A prominent instance of a resulting incidence
  bialgebra is the Butcher--Connes--Kreimer Hopf algebra of rooted trees.
  Both ordinary and directed restriction species are shown to be examples
  of a construction of decomposition spaces from certain cocartesian
  fibrations over the category of finite ordinals that are also cartesian
  over convex maps.  The proofs rely on some beautiful simplicial
  combinatorics, where the notion of convexity plays a key role.  The
  methods developed are of independent interest as techniques for
  constructing decomposition spaces.
\end{abstract}

\subjclass[2010]{18G30, 16T10, 06A07; 18-XX, 55Pxx}

% 18G30 Simplicial sets
% 16T10 Bialgebras
% 06A07 combinatorics of posets
% 06A11 algebraic aspects of posets
% 18-XX Category theory
% 55Pxx Homotopy Theory

\maketitle

\begin{center}
\begin{minipage}{120mm}	

\small

\tableofcontents

\normalsize

\end{minipage}
\end{center}

\setcounter{section}{-1}

\pagebreak

\addtocontents{toc}{\protect\setcounter{tocdepth}{1}}

%%%%%%%%%%%%%%%%%%%%%%%%%%%%%%%%%%%%%%%%%%%%%%%%%%
\section{Introduction}
%%%%%%%%%%%%%%%%%%%%%%%%%%%%%%%%%%%%%%%%%%%%%%%%%%

The notion of decomposition space was introduced in \cite{GKT:DSIAMI-1} as
a very general framework for incidence (co)algebras and M\"obius inversion.
Let us briefly recount the abstraction steps that led to this notion,
taking as starting point the classical theory of incidence algebras of
locally finite posets.  More extensive introductions can be found in
\cite{GKT:DSIAMI-1} and in \cite{GKT:ex}.  A very different motivation and
formulation of the notion is due to Dyckerhoff and
Kapranov~\cite{Dyckerhoff-Kapranov:1212.3563}.

The first step is the observation due to Leroux~\cite{Leroux:1975}, that
both the notions of locally finite poset (Rota
et.~al~\cite{JoniRotaMR544721,Rota:Moebius}) and monoid with the
finite decomposition property (Cartier--Foata~\cite{Cartier-Foata}) admit a
natural common generalisation in the notion of M\"obius category, and that
this setting allows for good functorial properties.

The next step is to observe that in many examples where
symmetries play a role, a more elegant treatment can be achieved by considering
groupoid-enriched categories instead of plain (set-enriched) 
categories, as illustrated in \cite{GalvezCarrillo-Kock-Tonks:1207.6404}.  This
involves a homotopical viewpoint, in which the algebraic identities arise as
homotopy cardinality of equivalences of groupoids, rather than just ordinary 
cardinality of bijections of sets.
At the same time it becomes clear that the algebraic structures can actually
be defined and manipulated at the objective level, postponing the act of taking
cardinality, and that structural phenomena can be seen at this level which are
not visible at the usual `numerical' level.  For example, at this level of
abstraction one can view the algebra of species under the Cauchy tensor product
as the incidence algebra of the symmetric monoidal category of finite sets and
bijections~\cite{GKT:ex}.  (The homotopy viewpoint induces one to consider even
$\infty$-groupoids~\cite{GKT:HLA,GKT:DSIAMI-1}, but this is not important 
in the present contribution.)

Finally, considering groupoid-enriched categories as simplicial groupoids via
the nerve construction led to the discovery~\cite{GKT:DSIAMI-1}
that the Segal condition, which essentially
characterises category objects among simplicial groupoids, is not actually
needed, and that a weaker notion suffices for the theory of incidence 
(co)algebras
and M\"obius inversion: this is the notion of decomposition space, which can be
seen as the systematic theory of decompositions, where categories are the
systematic theory of compositions.

While many coalgebras and bialgebras in combinatorics do arise
from (groupoid-enriched) categories, there are also many examples that can
easily be seen {\em not} to arise from such categories. Two 
prominent examples are the Schmitt Hopf algebra of graphs~\cite{Schmitt:1994} 
(also called the chromatic Hopf algebra~\cite{Aguiar-Bergeron-Sottile}), and
the Butcher--Connes--Kreimer Hopf algebra of rooted trees 
(see~\cite{Dur:1986} and~\cite{Connes-Kreimer:9808042}).
These two examples are reviewed below, where we shall see
that they cannot possibly arise directly from categories,
but that they do naturally come from decomposition spaces,
cf.~\cite{GKT:DSIAMI-1,GKT:ex}.
(They can be obtained indirectly from certain auxiliary categories, 
by means of a reduction step, cf.~D\"ur~\cite{Dur:1986}.)

The aim of the present paper is to fit these two examples into a large
class of decomposition spaces.  One may say there are two large classes of
decomposition spaces, but the first can be regarded as a special case of
the second.  The first is the class of decomposition spaces coming from
Schmitt's restriction species~\cite{Schmitt:hacs}---Schmitt already showed
that the Hopf algebra of graphs comes from a restriction species. 
While restriction species are presheaves on the category of finite sets
and injections, expressing the ability to decompose combinatorial 
structures, the new notion of directed restriction species expresses
decompositions compatible with an underlying partial order:

\medskip

\noindent{\em Definition.} A {\em directed restriction species} is a
presheaf on
the category of finite posets and convex maps.

\medskip

\noindent Ordinary restriction species can be regarded
as directed restriction species supported on
discrete posets.

We show that every directed restriction species defines a decomposition
space, and hence a coalgebra.  Instead of constructing these simplicial
objects by hand, we found it worth taking a slight detour through some more
abstract constructions.  On one hand, this serves to exhibit the general
principles behind the results, and on the other to develop machinery of
independent interest for the sake of constructing decomposition spaces.  We
route the construction through certain sesquicartesian fibrations over
$\un\simplexcategory$ (the category of finite ordinals, including the empty
ordinal): they are cocartesian fibrations which are furthermore cartesian
over convex maps, satisfying Beck--Chevalley, and subject to one further
condition which we refer to as the {\em iesq} (for `{\em
identity-extension-square}') condition.

The main results can now be organised as follows:

\medskip % similarly below, so that section 1 starts new page

\noindent {\bf Theorem.}
(Proposition~\ref{prop:DRSp->iesq} and Corollary~\ref{cor:RSp->iesq}.)
{\em  Restriction species and directed restriction species
naturally induce iesq sesquicartesian fibrations.}

\medskip

\noindent {\bf Theorem~\ref{thm:iesqsesqui->decomp}.} 
{\em Iesq sesquicartesian fibrations naturally induce decomposition spaces.}

\medskip

Together, and more precisely:

\medskip

\noindent {\bf Theorem.} 
(Theorems~\ref{thm:DRSp->decomp/C=ff} and \ref{thm:RSp->decomp/I=ff}.)
{\em There is a functor from restriction species to decomposition spaces
\culf over $\ds I$, and this functor is fully faithful.  Similarly there is
a functor from directed restriction species to decomposition spaces \culf
over $\ds C$, also fully faithful.}

\medskip

\noindent Here $\ds I$ is a certain decomposition space of layered finite
sets (\S\ref{sec:B}), and $\ds C$ is a certain decomposition space of
layered finite posets (\S\ref{sec:C}). For \culf functors, see \ref{culf} 
below.

\medskip

Many combinatorial structures which form (directed) restriction species
are closed under taking disjoint union in a way compatible with 
restrictions.  We capture this through the notion of monoidal 
directed restriction species (\ref{monoidaldirected}), and show:

\medskip

\noindent
{\bf Proposition~\ref{prop:mdrsp-mds}.} {\em Monoidal directed restriction
species naturally induce monoidal decomposition spaces and hence
bialgebras.}

\medskip

\noindent
Examples of this notion
  include rooted forests, directed graphs, posets, double posets, and many
  related structures.  A prominent instance of a resulting incidence
  bialgebra is the Butcher--Connes--Kreimer Hopf algebra of rooted trees.

\medskip

\noindent {\bf Note.}
  This paper was originally posted as Section~6 of the long
  manuscript {\em Decomposition spaces, incidence algebras 
  and M\"obius inversion}~\cite{GKT:1404.3202}, which has now been split into
  six papers, the first five being \cite{GKT:HLA,GKT:DSIAMI-1,GKT:DSIAMI-2,GKT:MI,GKT:ex}.
  The relevant definitions and results from these papers 
  (mostly~\cite{GKT:DSIAMI-1}) are reviewed below as needed, to render the 
  paper reasonably self-contained.

\bigskip

\noindent {\bf Acknowledgments.} We wish to thank Andr\'e Joyal and Mark Weber
for some very pertinent remarks, and apologise for not being able to follow them
through to their full depth in the present contribution. 

%%%%%%%%%%%%%%%%%%%%%%%%%%%%%%%%%%%%%%%%%%%%%%%%%%
\section{Decomposition spaces}
%%%%%%%%%%%%%%%%%%%%%%%%%%%%%%%%%%%%%%%%%%%%%%%%%%

In this section we briefly recall and motivate the notion of decomposition space.

\begin{blanko}{Incidence coalgebras of locally finite posets and categories.}
  Recall from Rota et al.~\cite{JoniRotaMR544721,Rota:Moebius}
  that for a locally finite poset, a coalgebra
  structure is induced on the vector space spanned by its intervals,
  with comultiplication given by
  $$
  \Delta( [x,y] ) = \sum_{m\in [x,y]} [x,m] \tensor [m,y].
  $$
  The local finiteness condition is precisely what ensures that the sum is 
  finite.  Coassociativity is a consequence of transitivity of the poset 
  relation.
  
  A poset can be regarded as a category in which there is one arrow
  from $x$ to $y$ if and only if $x\leq y$.  Thus intervals in a poset
  correspond to arrows in the category, and the incidence coalgebra construction
  generalises immediately to locally finite categories, as first observed by 
  Leroux~\cite{Leroux:1975}: the coalgebra has as
  underlying vector space the one spanned by the arrows, and the comultiplication
  is given by
  $$
  \Delta(f) = \sum_{b\circ a = f} a \tensor b  .
  $$
  Coassociativity follows from associativity of composition of arrows.
\end{blanko}

\begin{blanko}{Nerves, and an objective comultiplication.}
  The nerve of a category $\CC$ (e.g.~a poset) is the simplicial set $X: \simplexcategory\op 
  \to \Set$ whose $n$-simplices are sequences of $n$ composable arrows.
  This can be written formally as 
  $$
  X_n = \Fun([n],\CC)  ,
  $$
  where $\Fun([n],\CC)$ denotes just the {\em set} of functors $[n]\to\CC$.
  The face maps $d_i: X_{n+1} \to X_n$ compose the two consecutive arrows at 
  the $i$th object (for the inner face maps, $0 < i < n$) or project away the first or last arrow
  in the sequence (for the outer face maps, $i=0$ or $i=n$).  The comultiplication formula can now be
  seen at the objective level of the arrows themselves (not the vector space
  spanned by them) as given by the canonical span
  $$
  X_1 \stackrel{d_1}{\longleftarrow} X_2 \stackrel{(d_2,d_0)}{\longrightarrow} X_1 
  \times X_1
  $$
  by pullback along $d_1$ and composing along $(d_2,d_0)$.  Indeed the fibre
  of $d_1$ over an arrow $f\in X_1$ is the set of composable 
  pairs with composite $f$,
  and $(d_2,d_0)$ then returns the two constituents.
  Properly formalising this construction involves working with the slice
  category $\Set_{/X_1}$ instead of the vector space spanned by $X_1$, and the
  comultiplication is then a functor rather than just a function.  The classical
  viewpoint can be recovered by taking cardinality of the sets involved.
\end{blanko}

\begin{blanko}{Groupoids and homotopy viewpoints.}
  In practice one is often interested in combinatorial objects up to
  isomorphism, but at the same time wants to keep track of automorphisms.
  This can be accomplished elegantly by working with groupoids
  instead of sets, provided the homotopy viewpoint is taken consistently.
  The classical viewpoint is recovered by taking 
  homotopy cardinality, and all constructions should be performed in a
  homotopy invariant way.  In particular, all pullbacks must be homotopy 
  pullbacks, since this is the homotopy invariant notion.  
  \begin{quote}
	  {\em Throughout,
  when we say pullback, we refer to the homotopy pullback.}
  \end{quote}
  Strict pullbacks are {\em not} in general homotopy invariant, except
  if one of the maps pulled back along is an
  iso-fibration; this will be exploited occasionally.  Similarly, when we
  talk about simplicial groupoids, we must allow pseudo-functors
  $\simplexcategory\op\to\Grpd$ instead of just strict functors, since this
  is the homotopy invariant notion.  Most of our simplicial groupoids will
  actually happen to be strict, though, as is the case with fat nerves:
\end{blanko}

\begin{blanko}{Fat nerve.}  
  Starting with a small category $\CC$, instead of working with its
  ordinary nerve as above, one considers instead its
  {\em fat nerve}. This is a simplicial groupoid
  $X = \fatnerve \CC
  : \simplexcategory\op \to \Grpd$ rather
  than a simplicial set, and is
  defined formally by
  $$
  X_n = \Map([n],\CC) ,
  $$
  the groupoid whose objects are functors $[n]\to \CC$ (i.e.~$n$-sequences of 
  arrows), and whose morphisms are invertible natural transformations between 
  them.  This means that we keep track of the fact that two arrows $f$ and $g$
  in $\CC$ may be isomorphic by way of a commutative square
  $$\xymatrix{
     \cdot  \ar[r]^f\ar[d]_\simeq & \cdot \ar[d]^\simeq \\
     \cdot \ar[r]_g & \cdot
  }$$
  and similarly for $n$-sequences.
  The fat nerve constitutes a functor from categories to simplicial 
  groupoids, and this functor is fully faithful.
  
  The comultiplication formula resulting from the span construction now
  concerns isoclasses of arrows, and the sum is over isoclasses of
  factorisations.  In practice this is precisely what one wants.  For
  example, if $\CC$ is the category of finite sets and surjections, the
  incidence coalgebra resulting from the fat nerve is the Fa\`a di Bruno
  coalgebra~\cite{JoyalMR633783}.  Recovering the classical setting now
  involves homotopy cardinality of groupoids rather than cardinality of
  sets---this is just a question of taking the isomorphisms into account
  properly.  This will be recalled below in \ref{card}.
\end{blanko}

\begin{blanko}{Decomposition spaces.}
  It turns out that simplicial groupoids other than fat nerves of categories
  induce coalgebras.  Fat nerves of categories can be characterised (in part)
  by the Segal condition, which can be stated as requiring all squares of the form
  $$\xymatrix{
     X_{n+1} \drpullback \ar[r]^{d_0}\ar[d]_{d_{n+1}} & X_n \ar[d]^{d_n} \\
     X_n \ar[r]_{d_0} & X_{n-1}
  }$$
  to be pullbacks.  The most important one is
  $$\xymatrix{
     X_2 \drpullback \ar[r]^{d_0}\ar[d]_{d_2} & X_1 \ar[d]^{d_1} \\
     X_1 \ar[r]_{d_0} & X_0
  }$$
  which says that $X_2$ can be identified with the groupoid $X_1 \times_{X_0} 
  X_1$ of composable pairs of arrows.  The Segal condition thus
  expresses the ability to compose.
  
  The decomposition-space axiom, which is weaker, stipulates that certain
  other squares are pullbacks, the most important cases being
  $$\centerline{
  \xymatrix{
     X_3 \drpullback \ar[r]^{d_2}\ar[d]_{d_0} & X_2 \ar[d]^{d_0} \\
     X_2 \ar[r]_{d_1} & X_1
  }
  \qquad 
  \qquad
  \xymatrix{
     X_3 \drpullback \ar[r]^{d_1}\ar[d]_{d_3} & X_2 \ar[d]^{d_2} \\
     X_2 \ar[r]_{d_1} & X_1   .
  }
  }$$
  We refer to \cite{GKT:ex} for an explanation of the combinatorial meaning
  of this condition and a picture. It can be interpreted as the 
  expression of the ability to {\em decompose}.
\end{blanko}

To define more formally what a decomposition space is---and to
construct them---we need some simplicial technicalities.

\begin{blanko}{Generic and free maps (active and inert 
	maps~\cite{Lurie:HA}).}\label{generic-and-free}
  The category $\simplexcategory$ of nonempty finite ordinals $[n] = 
  \{0,1,\ldots,n\}$ and monotone maps
  has a so-called generic-free factorisation system (a general categorical
  notion, important in monad theory \cite{Weber:TAC13,Weber:TAC18}).  An arrow
  $a: [m]\to [n]$ in $\simplexcategory$ is \emph{generic} (also 
  called {\em active}) when it preserves
  end-points, $a(0)=0$ and $a(m)=n$; we use the special arrow symbol 
  $\genmap$ to denote generic maps. An arrow
  $a: [m]\to [n]$ in $\simplexcategory$ is \emph{free} (also called 
  {\em inert}) if it is distance
  preserving, $a(i+1)=a(i)+1$ for $0\leq i\leq m-1$; we use the 
  special arrow symbol $\rat$.  The generic maps are
  generated by the codegeneracy maps $s^i : [n{+}1] \to [n]$ and by the {\em
  inner} coface maps $d^i : [n{-}1]\to [n]$, $0 < i < n$, while the free maps are
  generated by the {\em outer} coface maps $d^\bot := d^0$ and $d^\top:= d^n$.
  Every morphism in $\simplexcategory$ factors uniquely as a generic map
  followed by a free map.  Furthermore, it is a basic fact \cite{GKT:DSIAMI-1}
  that generic and free maps in $\simplexcategory$ admit pushouts along each
  other, and the resulting maps are again generic and free.
\end{blanko}

\begin{blanko}{Decomposition spaces \cite{GKT:DSIAMI-1}.}
  A simplicial groupoid $\ds X:\simplexcategory\op\to\Grpd$ is called a {\em decomposition space}
  when it takes generic-free pushouts in $\simplexcategory$ to pullbacks.

  The notion is equivalent to the unital $2$-Segal spaces of Dyckerhoff and
  Kapranov~\cite{Dyckerhoff-Kapranov:1212.3563}, formulated in terms of
  triangulations of polygons.  Their work shows that the notion is of
  interest well beyond combinatorics.
\end{blanko}

\begin{theorem}
  \cite{GKT:DSIAMI-1}
  If $\ds X:\simplexcategory\op\to\Grpd$ is a decomposition space, the span construction
  above induces on $\Grpd_{/X_1}$ the structure of a coassociative and counital
  coalgebra (up to coherent equivalence).  Upon taking homotopy cardinality
  (in suitably finite situations, cf.~\ref{finite} below),
  this yields a coalgebra in the classical sense.
\end{theorem}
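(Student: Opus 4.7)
The plan is to build the coalgebra structure on $\Grpd_{/X_1}$ directly from the simplicial data encoded in $\ds X$, using the decomposition space axioms to verify the coalgebra laws, and only at the very end pass to a classical coalgebra via homotopy cardinality.

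First I would define the comultiplication $\Delta:\Grpd_{/X_1}\to\Grpd_{/X_1\times X_1}$ as the span functor $X_1\xleftarrow{d_1}X_2\xrightarrow{(d_2,d_0)}X_1\times X_1$, that is, homotopy pullback along $d_1$ followed by postcomposition with $(d_2,d_0)$; and the counit $\epsilon:\Grpd_{/X_1}\to\Grpd$ as the span functor $X_1\xleftarrow{s_0}X_0\to 1$. Then coassociativity reduces to comparing the two composite spans $X_1\leftarrow Y\to X_1\times X_1\times X_1$ obtained from $(\id\otimes\Delta)\circ\Delta$ and $(\Delta\otimes\id)\circ\Delta$; in each case $Y$ is a homotopy pullback of two face maps $X_2\to X_1$, and the two pullback squares displayed in the decomposition space definition identify both choices of $Y$ with $X_3$. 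A simplicial-identity calculation then shows that, under these identifications, the three maps $X_3\to X_1$ giving the legs of the composite span agree on both sides, yielding a canonical equivalence between the two iterated comultiplications.

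Counitality would be handled analogously: each composite $(\epsilon\otimes\id)\circ\Delta$ and $(\id\otimes\epsilon)\circ\Delta$ is computed by a span whose middle object is a pullback of a degeneracy against $d_1$, and the simplicial identities $d_1 s_0=\id=d_0 s_0$ together with the fact that degeneracies are iso-fibrations (so that strict pullback computes the homotopy pullback) collapse the middle object back to $X_1$, giving the identity functor up to canonical equivalence.

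The main obstacle will be the phrase ``up to coherent equivalence''. Since pullback in $\Grpd$ is only pseudofunctorial, the equivalences between spans produced above are themselves only specified up to natural isomorphism and must satisfy higher coherence conditions. The plan is to extract these coherences from the higher simplices $X_n$ for $n\geq 4$, whose pullback structure is also part of the decomposition space axiom; this is precisely the $2$-categorical bookkeeping carried out in~\cite{GKT:DSIAMI-1}, which I would invoke rather than reproduce in full. Finally, homotopy cardinality, under the finiteness hypotheses recalled in~\ref{finite}, turns the groupoids $X_n$ into $\Q$-vector spaces and pullback squares into numerical identities of structure constants, so the coassociativity and counitality established above descend directly to a coassociative counital coalgebra in the classical sense.
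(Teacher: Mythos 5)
Your definition of $\Delta$ and $\epsilon$ and your coassociativity argument follow the same route as the proof in the cited reference \cite{GKT:DSIAMI-1} (the present paper only quotes the result): the two iterated comultiplications are composite spans whose middle objects are the homotopy pullbacks of $d_2$, respectively $d_0$, against $d_1\colon X_2\to X_1$, and the two displayed decomposition-space squares identify both with $X_3$; the legs are then matched by simplicial identities. That part is sound.

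The counitality step, however, contains a genuine gap. The middle object of, say, $(\epsilon\otimes\id)\circ\Delta$ is the homotopy pullback of $s_0\colon X_0\to X_1$ against $d_2\colon X_2\to X_1$, and you claim it collapses to $X_1$ using ``the simplicial identities together with the fact that degeneracies are iso-fibrations''. Neither ingredient suffices. First, $s_0$ is not an iso-fibration in general (already for the fat nerve of a category an arrow isomorphic to an identity need not be an identity, so iso-lifts along $s_0$ fail). Second, and more importantly, even if the strict pullback did compute the homotopy pullback, nothing formal identifies that pullback with $X_1$: the statement that the square with top edge $s_0\colon X_1\to X_2$, vertical edges $d_1\colon X_1\to X_0$ and $d_2\colon X_2\to X_1$, and bottom edge $s_0\colon X_0\to X_1$ is a pullback is itself one of the decomposition-space axioms, namely the image under $\ds X$ of the pushout of the generic map $s^0\colon[1]\to[0]$ along the free map $d^\top\colon[1]\to[2]$ (dually $s^1$ and $d^\bot$ for the other counit identity). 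This is exactly the ``unital'' half of ``unital $2$-Segal''; for a general simplicial groupoid the square merely commutes, and it is not a consequence of the simplicial identities. The fix is to invoke the defining axiom here just as you do for coassociativity, but now for the degeneracy-against-outer-face squares; after that identification the legs do collapse by $d_1s_0=d_0s_0=\id$ (resp.\ $d_1s_1=d_2s_1=\id$) as you indicate. Deferring the higher coherences to \cite{GKT:DSIAMI-1} and the passage to cardinality under the finiteness hypotheses of \ref{finite} are both reasonable as stated.
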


The fat nerve of a category is always a
decomposition space.  Since the Segal-axiom squares are not special cases of the
decomposition-space axioms, this requires proof, but it is not a deep result
\cite{GKT:DSIAMI-1}.
Intuitively, the reason is that in situations where one
can compose (that is, in a category), one can always decompose, by summing 
over all
possible ways an object could have arisen by composition.

\begin{blanko}{Finiteness conditions (cf.~\cite{GKT:DSIAMI-2}).}\label{finite}
  Various finiteness conditions are important for various reasons.
  They tend to be satisfied in examples coming from combinatorics, and
  we shall establish them for all restriction species and directed
  restriction species.  Let us briefly comment on these conditions.
    
  In order to be able to take homotopy cardinality to get a coalgebra
  in vector spaces, it is necessary to assume that $X$ is {\em locally
  finite} (cf.~\cite[\S7]{GKT:DSIAMI-2}).  This means first of all
  that $X_1$ is a locally finite groupoid (i.e.~has finite
  automorphism groups), and second that each generic map is a finite
  map (i.e.~has finite fibres).  For a decomposition space $X$, this
  can be measured on the two maps
  $$
  X_0 \stackrel{s_0}\to X_1 \stackrel{d_1}\leftarrow X_2.
  $$

  For the comultiplication formula to be free of denominators, another
  condition is required, namely that $X$ must be {\em locally
  discrete} (cf.~\cite[\S 1.4]{GKT:ex}), which for a decomposition
  space amounts to the two displayed maps having discrete fibres.

  In order to have a M\"obius inversion formula, yet another finiteness
  condition is needed, which refers to a notion of non-degeneracy
  which is meaningful for {\em complete} decomposition spaces
  (cf.~\cite[\S2]{GKT:DSIAMI-2}), i.e.~those for which $s_0$ is mono.
  The condition is to have {\em locally finite length}, and it means
  (cf.~\cite[\S6]{GKT:DSIAMI-2}) that for each $a\in X_1$ there is an
  upper bound on the $n$ for which the map $X_n\to X_1$ has
  non-degenerate elements in the fibre.  See op cit.~for
  precision---the upshot is that there are only finitely many ways of
  splitting an object into non-degenerate pieces.
\end{blanko}

\begin{blanko}{Homotopy cardinality.}\label{card}
  Assuming local finiteness, the groupoid-level incidence coalgebra
  yields a vector-space level coalgebra by taking homotopy
  cardinality.  We refer to \cite{GKT:HLA} for the full story (in the
  setting of $\infty$-groupoids) and to \cite{GKT:ex} for some
  introduction geared towards combinatorics.  Very briefly, the
  homotopy cardinality of a groupoid $X$ is defined to be
  $\sum_{x\in\pi_0 X} \frac{1}{\norm{\Aut(x)}}$.  The groupoid slice
  $\Grpd_{/S}$ is the objective counterpart of the vector space
  $\Q_{\pi_0 S}$ spanned by the symbols $\delta_s$ denoting isoclasses
  of objects in $S$.  The cardinality of an object $X \to S$ is then
  the formal linear combination $\sum_{s\in \pi_0 S}
  \frac{\norm{X_s}}{\norm{\Aut(s)}}\, \delta_s$, where $\norm{X_s}$ is the
  homotopy cardinality of the homotopy fibre $X_s$.

  If the groupoids
  involved are just sets, the automorphism groups are trivial,
  and the notion reduces to ordinary cardinality.  Building the
  automorphism groups into the definition ensures it
  behaves well with respect to all the important operations, such as
  products and sums, (homotopy) pullbacks and (homotopy) fibres, etc.
\end{blanko}

\begin{blanko}{CULF functors.}\label{culf}
  The relevant notion of morphism between decomposition spaces is that
  of \culf functor \cite{GKT:DSIAMI-1}: \culf functors between
  decomposition spaces induce coalgebra homomorphisms.  A simplicial
  map is called {\em ULF} (unique lifting of factorisations) if it is
  cartesian on generic face maps, and it is called {\em conservative}
  if cartesian on degeneracy maps.  We say {\em \culf} for
  conservative and ULF, that is, cartesian on all generic maps.
  
  Since {\culf}ness refers to generic maps, just as the finiteness 
  conditions just stated, we have the following useful result.
\end{blanko}

\begin{lemma}\label{lem:culf-local}
  Let $P$ denote a property of decomposition spaces which is measured on
  generic maps (such as being locally discrete or of locally finite
  length).  Then if $F: \ds Y \to \ds X$ is \culf and $\ds X$ has property
  $P$, then also $\ds Y$ has property $P$.  This is also the case for the
  property of being locally finite, except we must check additionally that
  $Y_1$ is locally finite.
\end{lemma}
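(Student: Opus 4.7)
The plan is to exploit the defining property of \culf functors stated in \ref{culf}: being cartesian on all generic maps means that for every generic $g:[m]\genmap[n]$ the naturality square
$$\xymatrix{
  Y_n \drpullback \ar[r]^{g^*}\ar[d]_F & Y_m \ar[d]^F \\
  X_n \ar[r]_{g^*} & X_m
}$$
is a pullback.  In particular, every generic face map and every degeneracy of $\ds Y$ is obtained as a pullback of the corresponding map of $\ds X$ along $F$.  Properties of these maps that are stable under pullback will then transfer from $\ds X$ to $\ds Y$ automatically.

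For local discreteness, the relevant maps are $s_0:X_0\to X_1$ and $d_1:X_2\to X_1$, both of which are generic.  The corresponding maps of $\ds Y$ are pullbacks along $F:Y_1\to X_1$, and since having discrete fibres is pullback-stable, discreteness transfers directly.

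For locally finite length, I would first point out that completeness (i.e.\ $s_0$ mono, which is required to state the property at all) transfers from $\ds X$ to $\ds Y$ by the same pullback argument, monomorphisms being pullback-stable.  The long face maps $X_n\to X_1$ are generic (they correspond to the unique active map $[1]\genmap[n]$), so $Y_n\to Y_1$ is the pullback of $X_n\to X_1$ along $F$, and the fibre of the former over $a\in Y_1$ is equivalent to the fibre of the latter over $F(a)$.  The remaining point is to transfer non-degeneracy; here I would use that a \culf functor is in particular cartesian on degeneracy maps, so a simplex of $\ds Y$ is degenerate if and only if its image in $\ds X$ is.  Hence the upper bound on $n$ coming from locally finite length of $\ds X$ applies verbatim to $\ds Y$.

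The main (modest) obstacle is local finiteness.  Finiteness of the generic maps $s_0:X_0\to X_1$ and $d_1:X_2\to X_1$ transfers to $\ds Y$ by the usual pullback argument, since a map with finite fibres has finite fibres after pullback.  However, local finiteness of $Y_1$ as a groupoid (finiteness of automorphism groups) is \emph{not} automatic from local finiteness of $X_1$: the functor $F:Y_1\to X_1$ is not forced to be injective on automorphisms nor to have finite fibres, and in principle automorphism groups in $Y_1$ could be larger than those in $X_1$.  This is exactly the extra hypothesis the lemma asks us to verify separately.
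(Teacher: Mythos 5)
Your argument is correct and is exactly the unpacking of what the paper leaves implicit: the lemma is stated without proof, justified only by the preceding remark that \culf{}ness and the finiteness conditions both refer to generic maps, so that pullback-stability of the relevant properties (discrete fibres, finite fibres, monomorphisms, detection of degeneracy) does all the work. Your treatment of each case, including the observation that local finiteness of $Y_1$ does not transfer and must be checked separately, matches the intended reasoning.
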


In fact, also:
\begin{lemma}\label{lem:CULF}
  A simplicial groupoid \culf over a decomposition space is itself a 
  decomposition space.
\end{lemma}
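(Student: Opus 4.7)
The plan is, given a generic-free pushout in $\simplexcategory$, to produce from the \culf hypothesis and from the fact that $\ds X$ is a decomposition space enough pullback squares to conclude, by an elementary cube-pasting argument, that $\ds Y$ sends the pushout to a pullback in $\Grpd$.

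Fix a generic-free pushout
$$\xymatrix{[k] \ar[r]^g \ar[d]_f & [m] \ar[d]^{f'} \\ [l] \ar[r]_{g'} & [n]}$$
in $\simplexcategory$, with $g,g'$ generic and $f,f'$ free (cf.~\ref{generic-and-free}). Applying $\ds Y$, $\ds X$ and the components of $F$ yields a cube in $\Grpd$ whose top face is the image under $\ds Y$ of the pushout (the square we must show is a pullback), whose bottom face is the image under $\ds X$ (a pullback, since $\ds X$ is a decomposition space), and whose four vertical faces are the naturality squares of $F$ along the four maps of the pushout. The \culf hypothesis, that $F$ is cartesian on all generic maps, says exactly that the two vertical faces indexed by $g$ and $g'$ are pullbacks; no hypothesis is made on, nor needed from, the vertical faces indexed by the free maps $f$ and $f'$.

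Now paste. Stacking the naturality pullback for $g'$ on top of the bottom face produces a pullback rectangle with corners $Y_n$, $Y_l$, $X_m$, $X_k$. By naturality of $F$ this is the very same rectangle that one obtains by instead stacking the top face of the cube on top of the naturality pullback for $g$. In this second factorisation the lower square is a pullback (it is the \culf square for $g$), so the two-out-of-three form of the pasting lemma for homotopy pullbacks forces the upper square---that is, $\ds Y$ applied to the original pushout---to be a pullback. This is the decomposition-space axiom for $\ds Y$.

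The argument is entirely elementary; the only care required is to keep track of which of the four naturality squares lie over generic edges (where the \culf hypothesis gives a pullback) and which over free edges (where nothing is asserted), and to invoke the pasting lemma in the correct direction. There is no deep obstacle, and the homotopy-invariant character is automatic from the blanket convention that every pullback in the paper is a homotopy pullback.
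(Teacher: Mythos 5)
Your argument is correct: the prism obtained from the naturality squares over the generic edges $g$ and $g'$, together with the pasting lemma for homotopy pullbacks applied to the rectangle with corners $Y_n$, $Y_l$, $X_m$, $X_k$, is exactly the right way to transfer the decomposition-space axiom from $\ds X$ to $\ds Y$ (noting, as you implicitly do, that $g'$ is again generic because generic maps push out along free maps to generic maps). The present paper gives no proof of this lemma, deferring to \cite{GKT:DSIAMI-1}, and your argument is essentially the one given there.
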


\begin{blanko}{Monoidal decomposition spaces and bialgebras.}
  There is a natural notion of {\em monoidal decomposition space}
  \cite{GKT:DSIAMI-1}, leading to bialgebras.  Briefly, it is a
  decomposition space $\ds X$ equipped with a functor $\tensor : \ds X
  \times \ds X \to \ds X$ required to be a monoidal structure, and required
  to be \culf.  The homotopy cardinality of this monoidal structure is an
  algebra structure, and the \culf condition ensures the compatibility with
  the coalgebra structure to result altogether in a bialgebra.  This is
  important in most applications to combinatorics, where almost always this
  monoidal structure, and hence the algebra structure, is given by disjoint
  union.  In the present contribution we focus mostly on the
  comultiplication, but comment on monoidal structure in
  \ref{monoidalrestr}--\ref{prop:monoidalrestr} and
  \ref{monoidaldirected}--\ref{prop:mdrsp-mds}.
\end{blanko}

\begin{blanko}{Decalage.}
  (See \cite{Illusie2}.)  Given a simplicial groupoid
  $\ds X$ as the top row in the following diagram, the {\em lower 
  Dec},
  $\Dec_\bot(\ds X)$, is a new simplicial groupoid (the bottom row in the diagram)
  obtained by deleting $X_0$ and shifting everything one place down, deleting
  also all $d_0$ face maps and all $s_0$ degeneracy maps.  It comes equipped
  with a simplicial map, called the {\em dec map},
  $d_\bot:\Dec_\bot(\ds X)\to \ds X$ given by the original $d_0$:

$$
\xymatrix@C+1em{
X_0  
\ar[r]|(0.55){s_0} 
&
\ar[l]<+2mm>^{d_0}\ar[l]<-2mm>_{d_1} 
X_1  
\ar[r]<-2mm>|(0.6){s_0}\ar[r]<+2mm>|(0.6){s_1}  
&
\ar[l]<+4mm>^(0.6){d_0}\ar[l]|(0.6){d_1}\ar[l]<-4mm>_(0.6){d_2}
X_2 
\ar[r]<-4mm>|(0.6){s_0}\ar[r]|(0.6){s_1}\ar[r]<+4mm>|(0.6){s_2}  
&
\ar[l]<+6mm>^(0.6){d_0}\ar[l]<+2mm>|(0.6){d_1}\ar[l]<-2mm>|(0.6){d_2}\ar[l]<-6mm>_(0.6){d_3}
X_3 
\ar@{}|\cdots[r]
&
\\
\\
X_1  \ar[uu]_{d_0}
\ar[r]|(0.55){s_1} 
&
\ar[l]<+2mm>^{d_1}\ar[l]<-2mm>_{d_2} 
X_2  \ar[uu]_{d_0}
\ar[r]<-2mm>|(0.6){s_1}\ar[r]<+2mm>|(0.6){s_2}  
&
\ar[l]<+4mm>^(0.6){d_1}\ar[l]|(0.6){d_2}\ar[l]<-4mm>_(0.6){d_3}
X_3 \ar[uu]_{d_0}
\ar[r]<-4mm>|(0.6){s_1}\ar[r]|(0.6){s_2}\ar[r]<+4mm>|(0.6){s_3}  
&
\ar[l]<+6mm>^(0.6){d_1}\ar[l]<+2mm>|(0.6){d_2}\ar[l]<-2mm>|(0.6){d_3}\ar[l]<-6mm>_(0.6){d_4}
X_4 \ar[uu]_{d_0}
\ar@{}|\cdots[r]
&
}
$$

  In the present contribution, we shall exploit decalage to relate the fat nerve
  of the Grothendieck construction of a restriction species with its associated
  decomposition space (Proposition~\ref{prop:Dec-of-DRSp} and Corollary~\ref{cor:DecR=NR}), in turn
  important in proving fully faithfulness of the construction of decomposition 
  spaces.  
  
  In a broader perspective, decalage plays an
  important role in the
  theory of decomposition spaces: on one hand,
  many reduction procedures in classical combinatorics can be expressed in terms
  of decalage \cite{GKT:ex}, and on the other hand, the very notion of
  decomposition space can be characterised in terms of decalage, by virtue of
  the following result from \cite[Theorem 4.11]{GKT:DSIAMI-1} (see 
  also \cite{Dyckerhoff-Kapranov:1212.3563}):
\end{blanko}

\begin{thm}\label{thm:decomp-dec-segal}
  $\ds X$ is a decomposition space if and only if $\Dec_\top(\ds X)$ and 
  $\Dec_\bot(\ds X)$ are
  Segal spaces, and the dec maps $d_\top : \Dec_\top(\ds X) \to \ds X$ and $d_\bot :
  \Dec_\bot(\ds X) \to \ds X$ are \culf.
\end{thm}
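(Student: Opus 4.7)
The plan is to identify the Segal squares of $\Dec_\bot \ds X, \Dec_\top \ds X$ and the naturality squares of the dec maps as images under $\ds X$ of specific generic-free pushouts in $\simplexcategory$, and then to apply pullback pasting.

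\textbf{Forward direction.} Suppose $\ds X$ is a decomposition space. Under the identifications $(\Dec_\bot\ds X)_n = X_{n+1}$ and $d_i^{\Dec_\bot} = d_{i+1}^X$, the Segal square of $\Dec_\bot\ds X$ at level $m\ge 2$ reads
$$\xymatrix{
X_{m+1} \ar[r]^{d_1} \ar[d]_{d_{m+1}} & X_m \ar[d]^{d_m} \\
X_m \ar[r]_{d_1} & X_{m-1}
}$$
which is the $\ds X$-image of the generic-free pushout in $\simplexcategory$ of $d^1$ (inner, generic) against $d^m$ (outer top, free), hence a pullback. The Segal squares of $\Dec_\top\ds X$ are analogous, with the outer bottom $d^0$ playing the role of $d^\top$. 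The naturality square of $d_\bot$ on a generic $\alpha:[m]\genmap[n]$ is the $\ds X$-image of the generic-free pushout of $\alpha$ against $d^0$ in $\simplexcategory$; and similarly for $d_\top$ using $d^\top$. All these squares are pullbacks by the decomposition-space axiom.

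\textbf{Reverse direction.} Conversely, suppose $\Dec_\bot\ds X, \Dec_\top\ds X$ are Segal and the dec maps are \culf. Given an arbitrary generic-free pushout
$$\xymatrix{
[p] \ar[r]^g \ar[d]_f & [q] \ar[d] \\
[p'] \ar[r] & [q']
}$$
in $\simplexcategory$, with $g$ generic and $f$ free, I would induct on the length of $f$ as a composition of elementary outer face maps $d^0$ and $d^\top$. In the inductive step, write $f = e\circ f_1$ with $e\in\{d^0,d^\top\}$. The pushout splits vertically as a composite of two smaller generic-free pushouts: an inner one (of $g$ against $f_1$), a pullback by the induction hypothesis; and an outer one (of the intermediate map against $e$). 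By the closure property of \ref{generic-and-free}, the intermediate map parallel to $g$ is again generic, so the outer piece is precisely the naturality square of $d_\bot$ (if $e=d^0$) or $d_\top$ (if $e=d^\top$) at a generic map, hence a pullback by \culf. Pullback pasting then gives that the total square is a pullback.

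\textbf{Main obstacle.} The key combinatorial input used in both directions is that generic-free pushouts in $\simplexcategory$ exist and that the output legs retain the generic/free character of the input legs (\ref{generic-and-free}). In the reverse direction, this is what makes the inductive decomposition work: it ensures that at every intermediate stage the generic leg really is generic, so that the CULF hypothesis on the appropriate dec map applies. In the forward direction, the challenge is purely the explicit identification of the Segal squares of $\Dec$ and the naturality squares of the dec maps with specific generic-free pushouts, which is routine but requires careful bookkeeping of the index shifts involved in the dec construction.
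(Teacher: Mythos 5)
This theorem is not proved in the present paper: it is imported verbatim from \cite[Theorem 4.11]{GKT:DSIAMI-1}, so there is no in-paper proof to measure you against. Judged on its own terms, your argument is correct and is essentially the standard proof of the cited result. The forward direction correctly identifies the Segal squares of the two decalages and the naturality squares of the dec maps on generic maps as $\ds X$-images of specific generic--free pushouts (inner $d^1$ against $d^\top$, inner $d^{n+1}$ against $d^\bot$, and an arbitrary generic $\alpha$ against $d^\bot$ or $d^\top$, respectively); the index bookkeeping you flag as the main chore checks out, including the edge case $m=2$, which recovers the two displayed ``most important'' decomposition-space squares. The reverse direction, decomposing an arbitrary free map into elementary outer cofaces and pasting, is also sound, and the closure property of \ref{generic-and-free} is indeed the key input guaranteeing that each intermediate horizontal leg stays generic. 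One observation worth making explicit: your reverse direction never uses the Segal hypotheses on $\Dec_\bot(\ds X)$ and $\Dec_\top(\ds X)$ --- \culf{}ness of the two dec maps alone already forces the decomposition-space condition, and the Segal conditions then come for free by your forward direction. This is not an error (the theorem is stated as an equivalence, so redundancy among the right-hand conditions is harmless, and it is consistent with the several equivalent characterisations listed in the cited Theorem 4.11), but noting it would sharpen the statement and clarify which hypotheses do the work.
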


\begin{blanko}{Right fibrations and left fibrations.}
  (See \cite{GKT:DSIAMI-1}.)
  A functor between simplicial groupoids $f:\ds Y \to \ds X$
  is called a {\em right fibration} if it
  is cartesian on all bottom face maps $d_\bot$.  This implies that it is 
  also cartesian on all generic maps (i.e.~is \culf).
  The terminology is motivated by the case where $\ds Y$ and $\ds X$ 
  are Segal spaces, in which case it corresponds to standard usage in 
  the theory of $\infty$-categories.  If $\ds X$ and $\ds Y$ are fat nerves 
  of categories, then `right fibration' corresponds to groupoid fibration
  in the sense of Street~\cite{Street:fibsinbicats}.

  Similarly, $f$ is called a {\em left fibration} if it
  is cartesian on $d_\top$ (and consequently on all generic maps also).
\end{blanko}

\begin{lemma}\label{lem:DecULF-is-right}
  If $f:\ds Y \to \ds X$ is a \culf functor between decomposition spaces, then
  $\Dec_\bot(f) : \Dec_\bot(\ds Y) \to \Dec_\bot(\ds X)$ is a right fibration of Segal
  spaces.  Similarly, $\Dec_\top(f) : \Dec_\top(\ds Y) \to
  \Dec_\top(\ds X)$ is a left fibration.
\end{lemma}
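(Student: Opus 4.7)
The plan has two ingredients: Theorem~\ref{thm:decomp-dec-segal}, which upgrades the two decalages from simplicial groupoids to Segal spaces, and a short combinatorial identification of the ``new outer'' face maps of the decalage with ``old inner'' face maps of the original. First I would invoke Theorem~\ref{thm:decomp-dec-segal} to deduce that $\Dec_\bot(\ds X)$, $\Dec_\bot(\ds Y)$, $\Dec_\top(\ds X)$ and $\Dec_\top(\ds Y)$ are all Segal spaces; this disposes of the Segal part of the conclusion and reduces the statement to verifying the fibration property.

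Next I would unwind the definition of $\Dec_\bot$: since it amputates the $d_0$ face map of $\ds X$ at every level, the bottom face map of $\Dec_\bot(\ds X)$ at level $n$ is the original $d_1\colon X_{n+1}\to X_n$, induced by the coface $d^1\colon [n]\rat [n+1]$. For every $n\geq 1$ the omitted vertex $1$ is an interior vertex of $[n+1]$, so $d^1$ is an inner coface and hence a generic map in the sense of~\ref{generic-and-free}. By the \culf hypothesis, $f$ is cartesian on every generic map, so the naturality squares
$$
\xymatrix{
Y_{n+1}\drpullback \ar[r]^{d_1}\ar[d]_{f_{n+1}} & Y_n \ar[d]^{f_n}\\
X_{n+1} \ar[r]_{d_1} & X_n
}
$$
are pullbacks for all $n\geq 1$; this says exactly that $\Dec_\bot(f)$ is cartesian on bottom face maps, i.e.\ is a right fibration. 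The upper-decalage statement is strictly analogous: the top face map of $\Dec_\top(\ds X)$ at level $n$ is the original $d_n\colon X_{n+1}\to X_n$, induced by the inner coface $d^n\colon [n]\rat [n+1]$ (skipping the interior vertex $n$), and \culfness of $f$ again delivers the required pullback squares, so $\Dec_\top(f)$ is a left fibration.

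I do not foresee a genuine obstacle here: the only step needing care is the bookkeeping identifying the decalage's outer face maps with the original's inner face maps, after which the statement is an immediate consequence of \culfness of $f$ combined with Theorem~\ref{thm:decomp-dec-segal}.
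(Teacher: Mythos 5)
Your proof is correct and is the expected argument: the paper itself states this lemma without proof (it is recalled from \cite{GKT:DSIAMI-1}), and your two ingredients---Theorem~\ref{thm:decomp-dec-segal} for the Segal-space part, and the observation that the bottom (resp.\ top) face maps of $\Dec_\bot$ (resp.\ $\Dec_\top$) are the original inner face maps $d_1$ (resp.\ $d_n$), hence generic, hence covered by the \culf hypothesis---are exactly what that proof amounts to.
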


%%%%%%%%%%%%%%%%%%%%%%%%%%%%%%%%%%%%%%%%%%%%%%%%%%
\section{Two motivating examples and two basic examples}
%%%%%%%%%%%%%%%%%%%%%%%%%%%%%%%%%%%%%%%%%%%%%%%%%%

While many important examples of coalgebras in combinatorics come from
decomposition spaces which are just (fat nerves of) categories, there are
also many examples which do not (directly) come from a category.
(Sometimes, a construction can be made, involving a reduction procedure
\cite{Dur:1986}.)

In this section we first explain the two examples that triggered the
present investigations, and then explain the most basic example from the
two families they belong to.  The first example, Schmitt's  Hopf algebra
of graphs, is an example of a restriction species.  The terminal
restriction species is that of finite sets.  The second example, the
Butcher--Connes--Kreimer Hopf algebra is an example of a new notion we
introduce, directed restriction species, and the terminal such is the
example of finite posets.

\begin{blanko}{The chromatic Hopf algebra (of graphs).}\label{ex:graphs}
  The following Hopf algebra of graphs was first studied by
  Schmitt~\cite{Schmitt:1994},
%   \S 12, 
  and later by 
  Aguiar--Bergeron--Sottile~\cite{Aguiar-Bergeron-Sottile} and 
  Humpert--Martin~\cite{Humpert-Martin}.  For a graph $G$ with vertex set $V$
  (admitting multiple edges and loops), and a subset $U \subset V$, define $G|U$
  to be the graph whose vertex set is $U$, and whose graph structure is induced
  by restriction (that is, the edges of $G|U$ are those edges of $G$ both of
  whose incident vertices belong to $U$).  On the vector space spanned by
  isoclasses of graphs, define a comultiplication by the rule
  $$
  \Delta(G) = \sum_{A+B=V} G|A \tensor G|B .
  $$
  
  This coalgebra is the cardinality of the coalgebra of a {\em
  decomposition space} but not directly of a {\em category}.  Indeed,
  define a simplicial groupoid with $\ds G_1$ the groupoid of graphs, and
  more generally $\ds G_k$ the groupoid of graphs with an ordered partition
  of the vertex set $V$ into $k$ parts (possibly empty), i.e.~a function $V
  \to \un k$ (this is what we shall call a layering~(\ref{layering})).  In
  particular, $\ds G_0$ is the contractible groupoid consisting only of the
  empty graph.  The outer face maps delete the first or last part of the
  graph, and the inner face maps join adjacent parts.  The degeneracy maps
  insert an empty part.  It is clear that this is not a Segal space: a
  graph structure on a given set cannot be reconstructed from knowledge of
  the graph structure of the parts of the set, since chopping up the graph
  and restricting to the parts throws away all information about edges
  going from one part to another.  One can easily check that it {\em is} a
  decomposition space (see \cite{GKT:ex}, where there is also a nice
  picture illustrating the decomposition-space axiom in this case), hence 
  induces a coalgebra.
  Note that disjoint union of graphs makes this into a bialgebra.  With
  grading by the number of vertices, this is a connected graded bialgebra,
  hence a Hopf algebra, clearly precisely Schmitt's chromatic 
  Hopf algebra.
\end{blanko}

\begin{blanko}{Butcher--Connes--Kreimer Hopf algebra.}\label{ex:CK}
  A {\em rooted tree} is a connected and simply-connected graph with a specified
  root vertex; a {\em forest} is a disjoint union of rooted trees.  The
  Butcher--Connes--Kreimer Hopf algebra of rooted trees
  \cite{Connes-Kreimer:9808042} is the free algebra on the set of isoclasses of
  rooted trees, with comultiplication defined by summing over certain admissible
  cuts $c$:
  $$
  \Delta(T) = \sum_{c\in \operatorname{adm.cuts}(T)} P_c \tensor R_c  .
  $$
  An admissible cut $c$ is a splitting of the set of nodes into two subsets,
  such that the second forms a subtree $R_c$ containing the root node (or is the
  empty forest); the first subset, the complement `crown', then forms a
  subforest $P_c$, regarded as a monomial of trees.  Note that compared to the
  arbitrary splitting allowed in Schmitt's Hopf algebra of graphs, the
  admissible cuts are thus required to be compatible with the partial order
  underlying trees and forests.
  
  D\"ur~\cite{Dur:1986} (Ch.IV, \S3) gave an incidence-coalgebra construction of
  the Butcher--Connes--Kreimer coalgebra by starting with the category $\CC$ of
  forests and root-preserving inclusions, generating a coalgebra (in our
  language the incidence coalgebra of the fat nerve of $\CC$, cf.~\cite{GKT:ex}),
  and imposing the
  equivalence relation that identifies two root-preserving forest inclusions if
  their complement crowns are isomorphic forests.  To be precise, this yields
  the opposite of the Butcher--Connes--Kreimer coalgebra, in the sense that the
  factors $P_c$ and $R_c$ are interchanged.  To remedy this, one should just use
  $\CC\op$ instead of $\CC$.
  
  We can obtain the Butcher--Connes--Kreimer coalgebra directly from a
  decomposition space (cf.~\cite{GKT:ex}): let $\ds H_1$ denote the
  groupoid of forests, and let $\ds H_2$ denote the groupoid of
  forests with an admissible cut.  More generally, $\ds H_0$ is
  defined to be a point, and $\ds H_k$ is the groupoid of forests with
  $k-1$ compatible admissible cuts.  These form a simplicial groupoid
  $\ds H$ in which the inner face maps forget a cut, and the outer
  face maps project away either the crown or the bottom layer (the
  part of the forest below the bottom cut).  It is clear that $\ds H$
  is not a Segal space: a tree with a cut cannot be reconstructed from
  its crown and its bottom tree, which is to say that $\ds H_2$ is not
  equivalent to $\ds H_1 \times_{\ds H_0} \ds H_1$.  It is
  straightforward to check that it {\em is} a decomposition space, and
  that its incidence coalgebra is precisely the
  Butcher--Connes--Kreimer coalgebra.
  
  The relationship with D\"ur's construction is this (cf.~\cite{GKT:ex}): the
  `raw' decomposition space $\fatnerve(\CC\op)$ is the decalage of $\ds H$:
  $$
  \Dec_\top \ds H \simeq \fatnerve(\CC\op) .
  $$
  Furthermore, the dec map $\Dec_\top \ds H \to \ds H$, always a \culf
  functor, realises precisely D\"ur's reduction.
    
  As in the graph example, disjoint union makes this 
  coalgebra into a bialgebra.  It is graded by the number of nodes, and since
  the empty forest is the only one without nodes, this bialgebra is connected,
  and hence a Hopf algebra.

  (While the decomposition space $\ds H$ is not a Segal space, it admits
  important variations which {\em are} Segal spaces, namely by replacing the 
  combinatorial trees above by {\em operadic} trees, as explained in 
  \ref{ex:trees}.)
\end{blanko}

\begin{blanko}{Getting decomposition spaces from restriction species and directed restriction species.}
  The graph example is just one in a large family of coalgebras (and bialgebras)
  constructed by Schmitt~\cite{Schmitt:hacs}, namely coalgebras induced by
  restriction species
  (see also \cite{Aguiar-Mahajan}).  We shall show, first of all, that
  restriction species in the sense of Schmitt~\cite{Schmitt:hacs} are examples
  of decomposition spaces, and that they and their associated coalgebras
  exemplify the general construction.  
  The example with trees does not come from a restriction species, but we
  introduce the notion of {\em directed restriction species}, which covers this
  examples and many others, and which also define decomposition spaces.
  
  The next two examples are the basic ones.
\end{blanko}

\begin{blanko}{The binomial Hopf algebra.}
  Define a comultiplication on the vector space spanned by isoclasses of finite 
  sets by
  $$
  \Delta(A) = \sum_{A_1+A_2=A} A_1 \tensor A_2  .
  $$
  Here the sum is over all pairs of subsets of $A$ whose union is $A$.
\end{blanko}

\begin{blanko}{The Hopf algebra of finite posets.}\label{ex:posetcoalg}
  Define a comultiplication on the vector space spanned by isoclasses of finite 
  posets by
  $$
  \Delta(P) = \sum_{c\in \text{cuts}(P)} D_c \tensor U_c   .
  $$
  Here the sum is over all admissible cuts of $P$; an {\em admissible
  cut} $c=(D_c,U_c)$ is by definition a way of writing $P$ as the
  disjoint union of a lower-set $D_c$ and an upper-set $U_c$.
%   (this implies that no element in $D_c$ is greater than any element in $U_c$).
  This coalgebra was studied by 
  Aguiar--Bergeron--Sottile~\cite{Aguiar-Bergeron-Sottile}, who trace its
  origins back to Gessel~\cite{Gessel:84}. See also 
  Figueroa--Gracia-Bond\'ia~\cite{Figueroa-GraciaBondia:0408145}.
\end{blanko}

\section{Simplicial preliminaries}

A key ingredient in our constructions is the beautiful interplay
between the topologist's Delta and the algebraist's Delta.
After setting up the notation, we establish a certain correspondence
between squares in the two categories.

\begin{blanko}{`Topologist's Delta'.}\label{deltadelta}
  The category $\simplexcategory$ is the skeleton of the category 
  of non-empty finite ordered sets and monotone maps. 

\noindent Notation: its objects are
$$
[n]:=\{0,1,\ldots,n\},\qquad n\geq 0 .
$$
The monotone  maps are generated by
\begin{itemize}\item $s^k:[n{+}1]\to[n]$ that repeats the element $k\in [n]$,
\item $d^k:[n]\to[n{+}1]$ that skips the element $k\in [n{+}1]$.
\end{itemize}
Note that $[0]$ is terminal.
\end{blanko}

\begin{blanko}{`Algebraist's Delta'.}
  The category $\unDelta$ is the skeleton of the category of finite ordered
  sets (including the empty set) and monotone maps.

\noindent Notation: its objects are
$$
\un n:=\{1,\ldots,n\},\qquad n\geq 0 .
$$ 
The monotone maps are generated by \begin{itemize}
\item$\un s^k:\un{n{+}1}\to\un n$ that repeats the element $k+1\in\un n$, ($0\le k\le n-1$),
\item $\un d^k:\un n\to\un{n{+}1}$ that skips the element $k+1\in \un{n{+}1}$, ($0\le k\le n$).
\end{itemize}
Note that $\un 1$ is terminal, $\un 0$ is initial, and the only map with 
target $\un 0$ is the identity.
\end{blanko}

There is a full inclusion $\simplexcategory \to \unDelta$ which on
objects sends $[n]=\{0,\dots,n\}$ to $\un{n{+}1}=\{1,\dots,n+1\}$.  On
maps it just does nothing, up to the canonical relabelling of the
elements, $[n]\cong\un{n{+}1}$.  Thus it sends $d^k$ to $\un d^k$ and
$s^k$ to $\un s^k$.  

More important is the following duality, which is 
standard \cite{Joyal:disks}.
\begin{lemma}\label{Joyal-duality}
  There is a canonical isomorphism of categories
  $$
  \Deltagen\op \;\;\cong
  \un \simplexcategory ,
  $$
\end{lemma}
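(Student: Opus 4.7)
The plan is to exhibit an explicit isomorphism $F : \Deltagen\op \to \un\simplexcategory$ defined on objects by $F([n]) = \un n$ (so that the terminal $[0]$ of $\Deltagen$ maps to the initial $\un 0$ of $\un\simplexcategory$, as required). On morphisms, the key observation is that a generic map $f : [m] \to [n]$, being a monotone sequence $0 = f(0) \le f(1) \le \cdots \le f(m) = n$, is the same data as an ordered decomposition of $\un n = \{1, \dots, n\}$ into $m$ (possibly empty) consecutive blocks $B_j = \{f(j-1)+1, \dots, f(j)\}$. I define $Ff : \un n \to \un m$ to be the monotone map $i \mapsto j$ whenever $i \in B_j$, equivalently $Ff(i) = \min\{j : f(j) \ge i\}$.

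For the inverse, given any monotone $g : \un n \to \un m$ the fibres $g^{-1}(j) \subseteq \un n$ are automatically consecutive intervals and so provide exactly the same block-decomposition data; explicitly, set $(Gg)(j) := |\{i \in \un n : g(i) \le j\}|$, and check that $Gg : [m] \to [n]$ is monotone and endpoint-preserving (using that $g$ takes values in $\{1,\dots,m\}$), hence generic. Both constructions encode a morphism as the tuple of block sizes $(|B_j|)_{j=1}^m$, so the mutual inverse relations $FG = \id$ and $GF = \id$ are immediate from the definitions.

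Functoriality of $F$ is a direct consequence of the partition picture: for a composable pair $[m] \xrightarrow{f} [n] \xrightarrow{f'} [p]$ of generic maps, the decomposition of $\un p$ induced by $f' \circ f$ is obtained by grouping the $f'$-blocks of $\un p$ according to which $f$-block of $\un n$ their index belongs to, and this is precisely the composite $Ff \circ Ff'$ (note the reversal, since $F$ is contravariant). The main point to be careful about is simply the indexing shift: $[n]$ has $n+1$ elements while $\un n$ has $n$ elements, so a generic $[m] \to [n]$ corresponds to a map $\un n \to \un m$; once the bookkeeping is fixed, no real obstacle arises. As a sanity check one verifies that $F$ carries inner cofaces $d^i : [n-1] \to [n]$ to codegeneracies $\un s^{i-1} : \un n \to \un{n-1}$, and codegeneracies $s^i : [n+1] \to [n]$ to cofaces $\un d^i : \un n \to \un{n+1}$, matching the generators on both sides.
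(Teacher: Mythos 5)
Your proof is correct and is essentially the paper's (implicit) argument made explicit: the paper only cites the duality as standard, records the dictionary on generators, and draws the ``dots and walls'' picture, and your block decomposition $B_j=\{f(j-1)+1,\dots,f(j)\}$ is exactly that picture, with both hom-sets identified with compositions of $n$ into $m$ possibly empty parts. Your generator check also agrees with the paper's stated correspondence ($\un d^k\leftrightarrow s^k$ and $\un s^k\leftrightarrow d^{k+1}$).
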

\noindent
\begin{itemize}
		\item $\un n$ corresponds to $[n]$, 
		\item $\un d^k:\un n\to\un{n{+}1}$ corresponds to $s^k:[n{+}1]\to[n]$, 
		\item $\un s^k:\un{n{+}1}\to\un n$ corresponds to the inner coface map 
		$d^{k+1}:[n]\to[n{+}1]$.
	\end{itemize}

	The following graphical representation may be helpful.
	In $\unDelta$, draw the elements in $\un n$ as $n$ dots, and in 
	$\Deltagen$ draw the elements in $[n]$ as $n+1$ walls.
	A map operates as a function on the set of dots when considered a
map in $\unDelta$ while it operates as a function on the walls when considered a map in 
$\Deltagen$.
Here is a picture of a certain map $\un 5 \to \un 4$ in
$\unDelta$ and of the corresponding map $[5] \leftarrow [4]$ in
$\Deltagen$.
\begin{center}

\begin{texdraw}
  \arrowheadtype t:V
  
    \arrowheadsize l:6 w:3  

  \move (0 0) 
  \bsegment
%   \tdred
  \move (0 -5)
%   \lvec (0 85)
  \move (0 8) \fcir f:0 r:2.5
  \move (0 24) \fcir f:0 r:2.5 
  \move (0 40) \fcir f:0 r:2.5 
  \move (0 56) \fcir f:0 r:2.5 
  \move (0 72) \fcir f:0 r:2.5 
  \move (0 0) \rlvec (-8 0)
  \move (0 16) \rlvec (-8 0)
  \move (0 32) \rlvec (-8 0)
  \move (0 48) \rlvec (-8 0)
  \move (0 64) \rlvec (-8 0)
  \move (0 80) \rlvec (-8 0)
  \esegment
  
  \move (30 0)
    \bsegment
% 	\tdred
    \move (0 3)
%   \lvec (0 77)
  \move (0 16) \fcir f:0 r:2.5 
  \move (0 32) \fcir f:0 r:2.5 
  \move (0 48) \fcir f:0 r:2.5 
  \move (0 64) \fcir f:0 r:2.5 
    \move (0 8) \rlvec (8 0)
    \move (0 24) \rlvec (8 0)
    \move (0 40) \rlvec (8 0)
    \move (0 56) \rlvec (8 0)
    \move (0 72) \rlvec (8 0)

  \esegment

  \move  (27 72) \avec (3 80) 
    \move   (27 55) \avec (3 49)
    \move   (27 41) \avec (3 47)
    \move   (27 22.7) \avec (3 2.3)
  \move   (27 8) \avec (3 0)

        \arrowheadsize l:4 w:3  
% \tdblue
    \linewd 0.9
    \move (0 8) \avec (15 20)\lvec (30 32)
    \move (0 24)\avec (15 28)\lvec (30 32)
    \move (0 40)\avec (15 36)\lvec (30 32)
    \move (0 56)\avec (15 60)\lvec (30 64)
    \move (0 72)\avec (15 68)\lvec (30 64)
\end{texdraw}
\end{center}

\begin{blanko}{Ordinal sum.}
  The ordinal sum monoidal structure $(\unDelta,+,\un 0)$ gives a
  monoidal structure $(\Deltagen,\vee,[0])$, via
  Lemma~\ref{Joyal-duality}.  The free maps $[n]\rat[n']$ in
  $\simplexcategory$ may be expressed uniquely as
  $[n]\rat[a]\vee[n]\vee[b]$.  Any map $[k]\to[n']$ in
  $\simplexcategory$ has a unique factorisation as a generic map
  $f:[k]\genmap[n]$ followed by a free map
  $[n]\rat[a]\vee[n]\vee[b]=[n']$.
\end{blanko}

\begin{blanko}{Pullbacks in $\unDelta$.}
  We shall need the following lemmas, whose proofs are straightforward.
\end{blanko}

\begin{lemma}\label{lem:somepbk1}
  For each $0 \leq k \leq n$, the following square is a pullback in $\unDelta$:
  $$\xymatrix{
     \un{n} \ar[d]_-=\drpullback \ar[r]^-{\un d^k} & \un{n{+}1}\ar[r]^-{\un d^k} & 
     \un{n{+}2} \ar[d]^{\un s^k} \\
     \un{n} \ar[rr]_-{\un{d}^k} && \un{n{+}1}  .
  }$$
\end{lemma}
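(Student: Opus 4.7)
My plan is to verify the square directly, using the simplicial identities, and then check the universal property by a one-step case analysis. First I would observe that commutativity of the square is just an instance of the zig-zag identity $\un s^k \circ \un d^k = \id_{\un{n+1}}$ applied to the inner face map $\un d^k \colon \un{n+1} \to \un{n+2}$ and the degeneracy $\un s^k \colon \un{n+2} \to \un{n+1}$; this yields $\un s^k \circ \un d^k \circ \un d^k = \un d^k$, which is precisely what is needed since the left vertical map is the identity.

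For the universal property, suppose given a test cone consisting of a monotone map $f \colon \un m \to \un n$ and a monotone map $g \colon \un m \to \un{n+2}$ satisfying $\un s^k \circ g = \un d^k \circ f$. The candidate comparison map $\un m \to \un n$ into the apex of the pullback is forced to equal $f$ itself, since the left vertical is the identity. The remaining task is to deduce from the cone equation that automatically $g = \un d^k \un d^k \circ f$, so that the comparison map is both unique and compatible with the top edge.

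The key combinatorial observation is that $\un s^k \colon \un{n+2} \to \un{n+1}$ has a non-singleton fibre (of size two) only over the element $k+1 \in \un{n+1}$, while every other fibre is a singleton; and, conversely, $\un d^k \colon \un n \to \un{n+1}$ is precisely the monotone inclusion whose image avoids $k+1$. Hence, for each $i \in \un m$, the element $\un d^k f(i) \in \un{n+1}$ lies over a singleton fibre of $\un s^k$, and this singleton pins down $g(i)$ uniquely. A direct case analysis, splitting on whether $f(i) \le k$ or $f(i) > k$, identifies that unique element as $\un d^k \un d^k f(i)$, completing the argument. I expect no genuine obstacle here: once one notes the complementary roles of the ``bad element'' that $\un s^k$ collapses and the element that $\un d^k$ skips, the proof is essentially immediate from the combinatorics of $\unDelta$.
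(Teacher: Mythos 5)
Your argument is correct: the paper gives no proof (Lemmas~3.5--3.7 are stated as ``straightforward''), and your direct verification---commutativity via the cosimplicial identity $\un s^k\un d^k=\id$, plus the observation that the image of $\un d^k$ avoids the unique non-singleton fibre of $\un s^k$, so that $g$ is forced---is exactly the intended elementary check. One small simplification: the final case analysis on $f(i)\le k$ versus $f(i)>k$ is unnecessary, since commutativity already exhibits $\un d^k\un d^k f(i)$ as a preimage of $\un d^k f(i)$ under $\un s^k$, and the singleton-fibre observation then identifies it with $g(i)$.
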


\begin{lemma}\label{lem:somepbk2}
  For each $0 \leq k \leq n$, the following square is a pullback in $\unDelta$:
  $$\xymatrix{
     \un{n} \ar[d]_=\drpullback \ar[r]^= & \un{n} \ar[d]^{\un d^k} \\
     \un{n} \ar[r]_-{\un{d}^k} & \un{n{+}1}  .
  }$$
\end{lemma}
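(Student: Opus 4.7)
The plan is to reduce the lemma to the standard categorical fact that the square with identities on two parallel sides and a monomorphism on the other two parallel sides is a pullback. Concretely, the claim is that $\un d^k : \un n \to \un{n{+}1}$ is a monomorphism in $\unDelta$, and the square is then the canonical pullback square of a mono against itself.

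First I would verify that $\un d^k$ is injective as a function of underlying sets: by definition it is the monotone map that skips the element $k{+}1 \in \un{n{+}1}$, and an order-preserving map whose underlying function is injective is a monomorphism in $\unDelta$. Next I would check the universal property directly: given any object $\un m$ of $\unDelta$ together with maps $f, g : \un m \to \un n$ such that $\un d^k \circ f = \un d^k \circ g$, the injectivity of $\un d^k$ forces $f = g$. Hence there is a unique map $\un m \to \un n$ (namely $f = g$ itself) making the cone commute, so the square is a pullback.

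I expect no real obstacle: the argument is one line once one unpacks the universal property, and the only ingredient needed beyond the definitions of $\unDelta$ and $\un d^k$ is the observation that coface maps are monic. If a more concrete description is desired, one can note that the set-theoretic pullback $\un n \times_{\un{n{+}1}} \un n$ along the mono $\un d^k$ is just the diagonal copy of $\un n$, which is precisely what the square displays.
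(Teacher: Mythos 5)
Your argument is correct: $\un d^k$ is injective, hence monic in $\unDelta$, and a square of a mono against itself with identities on the other two sides is a pullback by the one-line universal-property check you give. The paper omits the proof entirely (declaring it straightforward), and your verification is exactly the intended routine argument.
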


\begin{lemma}\label{lem:po=pbk}
 For
$0<k<n$
%$\bot<k<\top$
  and all $j$ the following squares are pullbacks
  $$\xymatrix{
     \un n \drpullback  \ar[r]^{\un d^\top}\ar[d]_{\un d^k} & \un{n{+}1} 
     \ar[d]^{\un d^k} \\
     \un {n{+}1} \ar[r]_{\un d^\top} & \un{n{+}2}
  }  \quad
\xymatrix{
     \un n \drpullback  \ar[r]^{\un d^\top}\ar[d]_{\un s^j} & \un{n{+}1} 
     \ar[d]^{\un s^j} \\
     \un {n{-}1} \ar[r]_{\un d^\top} & \un{n}
  }    \quad
\xymatrix{
     \un n \drpullback  \ar[r]^{\un d^\bot}\ar[d]_{\un d^k} & \un{n{+}1} 
     \ar[d]^{\un d^{k{+}1}} \\
     \un {n{+}1} \ar[r]_{\un d^\bot} & \un{n{+}2}
  }
  \quad
\xymatrix{
     \un n \drpullback  \ar[r]^{\un d^\bot}\ar[d]_{\un s^j} & \un{n{+}1} 
     \ar[d]^{\un s^{j+1}} \\
     \un {n{-}1} \ar[r]_{\un d^\bot} & \un{n}  .
  }  
  $$
\end{lemma}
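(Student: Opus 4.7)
My plan is to verify each of the four squares is a pullback by direct computation, exploiting that the forgetful functor $\unDelta \to \Set$ preserves and reflects pullbacks of diagrams that already lie in $\unDelta$ (any set-theoretic universal map is automatically monotone for the diagrams at hand). For each square I would first establish commutativity, which reduces to a standard cosimplicial identity: $\un d^k \un d^\top = \un d^\top \un d^k$ for the first square; $\un s^j \un d^\top = \un d^\top \un s^j$ for the second; $\un d^{k{+}1}\un d^\bot = \un d^\bot \un d^k$ for the third (this is exactly what explains the $+1$ index shift on the right-hand side); and $\un s^{j{+}1}\un d^\bot = \un d^\bot \un s^j$ for the fourth.

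Next, for each square I would identify the top--left corner $\un n$ with the $\Set$-level pullback of the right and bottom maps. In the first and third squares all four maps are injections, so the pullback is just the intersection of the two images in $\un{n{+}2}$; computing that $\un d^k$ skips $k{+}1$ and $\un d^\top$ skips $n{+}2$ (respectively $\un d^\bot$ skips $1$ and $\un d^{k{+}1}$ skips $k{+}2$), one sees the intersection omits two distinct elements and hence has exactly $n$ elements, matching the image of the composite $\un d^k \un d^\top = \un d^\top \un d^k$ (respectively $\un d^{k{+}1}\un d^\bot = \un d^\bot \un d^k$). In the second and fourth squares the pullback consists of pairs $(x,y)$ with $\un s^j(x) = \un d^\top(y)$ or the $\un d^\bot$ analogue; a short case analysis, splitting on whether $y$ equals the doubled value $j{+}1$ or not, produces exactly $n$ such pairs, in bijection with $\un n$ via the top and left maps.

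The main obstacle is purely notational: one must carefully track the $\un d^\bot$ index shift (pulling $\un d^\bot$ back along $\un d^k$ produces $\un d^{k{+}1}$, not $\un d^k$, because $\un d^\bot$ raises all indices by one), and likewise for the shifted $\un s^{j{+}1}$ in the fourth square. The hypothesis $0 < k < n$ guarantees that $\un d^k$ is an inner face, so that the two omitted indices remain distinct and no degenerate coincidences (such as $k{+}1 = 1$ or $k{+}2 = n{+}2$) arise. Beyond this bookkeeping the four verifications are routine, consistent with the text's assertion that the proofs are straightforward.
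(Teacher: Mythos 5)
The paper offers no proof of this lemma—it is introduced with the remark that the proofs are straightforward—and your direct element-level verification (commutativity via the cosimplicial identities, then identifying $\un n$ with the set-theoretic pullback, with monotonicity of mediating maps coming for free because the legs out of the pullback corner include an order-embedding $\un d^i$) is exactly the routine check the authors intend, and it is correct. Your bookkeeping of the index shifts ($\un d^{k+1}$ and $\un s^{j+1}$ on the right of the $\un d^\bot$ squares) and the fibre counts in the $\un s^j$ cases all check out.
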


\begin{blanko}{Convex maps.}\label{convexmaps}
  A map $j$ in $\unDelta$ is called {\em convex} and written $j: \un n \rat
  \un n'$ if it is distance-preserving: $j(x+1)= j(x) + 1$, for all
  $x\in\un n$.  (In the subcategory $\simplexcategory\subset \unDelta$ we
  called these `free maps'.  We prefer to use different names since they
  play a different role in the two categories.)  Observe that the convex
  maps are just the canonical inclusions
  $$
  j:\un n\rat\un a+\un n+\un b,
  $$
  and that, for $k>0$, there is a canonical bijection
  $$
  \unDelta_{\mathrm{convex}}(\un k,\un n)
  \;\;\cong\;\;
  \unDelta_{\mathrm{convex}}(\un {k{+}1},\un {n{+}1}).
  $$
  In combination with the full inclusion 
  $\simplexcategory\subset\unDelta$, we get
\end{blanko}

\begin{lemma}\label{lem:canonical-iso-hahaha}
  For $k>0$, there is a canonical isomorphism
  $$
  \simplexcategory_{\mathrm{free}}^{\geq1}
  \;\;\cong\;\;
  \unDelta_{\mathrm{convex}}^{\geq1},\qquad[k]\mapsto\un 
  k\quad(k\geq1).
  $$
\end{lemma}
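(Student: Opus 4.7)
The plan is to assemble the isomorphism from the two ingredients already in hand: the full inclusion $\simplexcategory \subset \unDelta$ and the shift bijection recorded just above the statement.

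First I would observe that the full inclusion $\simplexcategory \hookrightarrow \unDelta$, sending $[k] \mapsto \un{k+1}$, restricts to a fully faithful functor $\simplexcategory_{\mathrm{free}} \hookrightarrow \unDelta_{\mathrm{convex}}$. The inclusion is fully faithful by construction, and a monotone map $[n]\to[n']$ is distance-preserving (i.e.\ free) precisely when the corresponding map $\un{n{+}1}\to\un{n'{+}1}$ is distance-preserving (i.e.\ convex), since the two conditions read as the same formula after the relabeling. Restricted to objects with $k\geq 1$, this functor lands in the full subcategory of $\unDelta_{\mathrm{convex}}$ spanned by $\un m$ for $m\geq 2$, and yields a bijection on hom-sets there.

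Second, I would upgrade the canonical bijection
$$
\unDelta_{\mathrm{convex}}(\un k,\un n) \;\cong\; \unDelta_{\mathrm{convex}}(\un{k{+}1},\un{n{+}1}) \qquad (k\geq 1)
$$
to an isomorphism of categories $\unDelta_{\mathrm{convex}}^{\geq 2}\cong \unDelta_{\mathrm{convex}}^{\geq 1}$ sending $\un m \mapsto \un{m{-}1}$. Each side is parametrised by the same pairs $(a,b)$ via the canonical decomposition $\un k \rat \un a + \un k + \un b$ recorded in \ref{convexmaps}, and the bijection sends $\un k \rat \un a + \un k + \un b$ to $\un{k{+}1} \rat \un a + \un{k{+}1} + \un b$. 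Composition of convex maps corresponds to addition of parameter pairs, independently of the inner summand, so the bijection respects composition and is genuinely a functor.

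Composing the two steps gives the desired isomorphism
$$
\simplexcategory_{\mathrm{free}}^{\geq 1} \;\hookrightarrow\; \unDelta_{\mathrm{convex}}^{\geq 2} \;\cong\; \unDelta_{\mathrm{convex}}^{\geq 1},
$$
which on objects is $[k]\mapsto \un{k{+}1}\mapsto \un k$ for $k\geq 1$, as claimed. I anticipate no real obstacle; the only point warranting a sanity check is the necessity of $k\geq 1$, which reflects the fact that for $k=0$ the decomposition $\un a + \un 0 + \un b = \un{a+b}$ absorbs $a$ and $b$, so $\unDelta_{\mathrm{convex}}(\un 0,\un n)$ consists of the single empty map, whereas $\simplexcategory_{\mathrm{free}}([0],[n])$ has $n+1$ elements.
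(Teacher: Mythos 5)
Your proposal is correct and follows essentially the same route as the paper, which states the lemma as an immediate consequence of the two observations in \ref{convexmaps}: the full inclusion $\simplexcategory\subset\unDelta$ (matching free maps with convex maps) combined with the shift bijection $\unDelta_{\mathrm{convex}}(\un k,\un n)\cong\unDelta_{\mathrm{convex}}(\un{k{+}1},\un{n{+}1})$ for $k>0$. Your explicit verification that the shift respects composition (via addition of the parameters $(a,b)$) and your remark on why $k=0$ fails are exactly the points the paper leaves implicit.
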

\noindent Note that this does {\em not} extend to $k\geq0$ 
(since $\un 0$ is initial but $[0]$ is not).

\begin{lemma}\label{convexpbk}
  Convex maps in $\un \simplexcategory$ admit pullback
  along any map: 
  given the solid cospan consisting of $g$ and $i$,
  with $i$ convex,
  $$\xymatrix{
     \un n' \ar[d]_g & \un n \dlpullback \ar@{-->}[d]^f \ar@{ >-->}[l]_{j} \\
     \un k'  & \ar@{ >->}[l]^i \un k\,   ,
  }$$
the pullback exists and $j$ is again convex.  
\end{lemma}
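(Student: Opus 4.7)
The plan is to construct the pullback set-theoretically, then observe that convexity of $i$ forces the result to embed convexly into $\un n'$. Take $\un n := g^{-1}(i(\un k)) \subseteq \un n'$, equipped with the order inherited from $\un n'$; let $j: \un n \rat \un n'$ be the inclusion; and define $f: \un n \to \un k$ by the unique set-theoretic factorisation satisfying $i\circ f = g\circ j$, which exists because $i$ is injective and the image of $g\circ j$ is contained in $i(\un k)$.

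First I would verify that $j$ is convex. Writing $i$ as the canonical inclusion $\un k \rat \un a + \un k + \un b = \un k'$ (cf.\ \ref{convexmaps}), its image is the block $\{a{+}1,\ldots,a{+}k\} \subseteq \un k'$. Given $x\le y\le z$ in $\un n'$ with $x,z\in\un n$, monotonicity of $g$ yields $a{+}1 \le g(x) \le g(y) \le g(z) \le a{+}k$, so $g(y)\in i(\un k)$ and $y\in\un n$. Thus $\un n$ is a convex subset of $\un n'$ and $j$ is convex. Monotonicity of $f$ is immediate because $i$ is an order embedding onto its image.

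The universal property is routine: given monotone $p:\un m\to\un k$ and $q:\un m\to\un n'$ with $i\circ p = g\circ q$, the image of $q$ lies in $g^{-1}(i(\un k))=\un n$, so $q$ factors uniquely as a set map through $j$ via some $u:\un m\to\un n$; this $u$ is monotone because $j$ is an order embedding, and $f\circ u = p$ since $i$ is mono.

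There is no deep obstacle, but the point worth flagging is that $\unDelta$ does \emph{not} admit arbitrary pullbacks: the set-theoretic pullback of two monotone maps need not carry a linear order making both projections monotone. Convexity of $i$ is precisely the hypothesis that forces the preimage $g^{-1}(i(\un k))$ to be convex in $\un n'$, and hence to sit as an ordinal $\un n$ with a canonical convex projection $j$ to $\un n'$—this is what makes the construction work in $\unDelta$ rather than merely in the category of linearly ordered sets.
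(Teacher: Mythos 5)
Your proof is correct. The paper itself offers no proof of this lemma (it is stated bare, in the spirit of the surrounding ``proofs are straightforward'' remarks), and your construction is the evident one: take $\un n = g^{-1}(i(\un k))$, check via the description of convex maps as canonical inclusions $\un k \rat \un a + \un k + \un b$ that this preimage is an interval in $\un n'$ (hence the inclusion $j$ is distance-preserving), and verify the universal property using that $i$ and $j$ are order embeddings. Your closing remark---that $\unDelta$ does not admit arbitrary pullbacks and that convexity of $i$ is exactly what saves the construction---is accurate and worth making explicit.
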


\begin{lemma}\label{lem:free-gen-Delta}
  For $k>0$, there is a bijection between the set of pullback squares
  along convex maps in $\un \simplexcategory$ and the set of
  commutative squares of generic against free maps in
  $\simplexcategory$
  $$
  \left\{\vcenter{\xymatrix{
     \un n' \ar[d]&\ar@{ >->}[l]  \dlpullback \un n \ar[d] \\
  \un    k' &\ar@{ >->}[l] \un  k}} \quad \text{in } 
  \;\unDelta\right\}
  \qquad = \qquad
  \left\{\vcenter{\xymatrix{
     [n']  & \ar@{ >->}[l] [n] \\
      [k'] \ar@{->|}[u] & \ar@{ >->}[l] [k] \ar@{->|}[u]
  }}
\quad \text{ in } \;\simplexcategory \right\}
_{\raisebox{0.6ex}[0ex][0ex]{.}}
  $$
  The bijection is given by Lemma~\ref{Joyal-duality} on the vertical maps,
  and by Lemma~\ref{lem:canonical-iso-hahaha} on the bottom horizontal map.
\end{lemma}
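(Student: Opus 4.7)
The plan is to construct the bijection in both directions and verify it is mutually inverse, using the explicit combinatorial description of Joyal duality.

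\textbf{Forward direction.} Given a pullback square in $\un\simplexcategory$ along a convex map, translate the vertical maps $f, g$ via Joyal duality (Lemma~\ref{Joyal-duality}) to generic maps $f', g'$ in $\simplexcategory$, and translate the convex horizontal maps $i, j$ via Lemma~\ref{lem:canonical-iso-hahaha} to free maps $i', j'$. Write the convex decompositions as $\un k' = \un a + \un k + \un b$ and $\un n' = \un{n_a} + \un n + \un{n_b}$; by Lemma~\ref{convexpbk}, $\un n = g^{-1}(\un k) \subset \un n'$. Unpacking Joyal duality in the walls-and-dots picture ($g(x)=j$ iff $g'(j-1) < x \leq g'(j)$), this reads $n_a = g'(a)$ and $n = g'(a+k) - g'(a)$, and the restriction $f$ dualises to $f'(s) = g'(a+s) - g'(a)$. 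Commutativity of the $\simplexcategory$-square then holds by direct computation: $j'(f'(s)) = n_a + f'(s) = g'(a+s) = g'(i'(s))$.

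\textbf{Reverse direction.} Starting from a commutative square in $\simplexcategory$ of the stated form, apply the inverse correspondences to obtain a square in $\un\simplexcategory$ with convex horizontal maps. Evaluating the commutativity identity $j'(f'(\ell)) = g'(i'(\ell))$ at $\ell=0$ and $\ell=k$, and using that $f'$ is generic (so $f'(0)=0$ and $f'(k)=n$), yields $n_a = g'(a)$ and $n_a + n = g'(a+k)$. This is precisely the condition $\un n = g^{-1}(\un k)$, so the resulting $\un\simplexcategory$-square is the canonical pullback of the cospan $\un n' \to \un k' \leftarrow \un k$ provided by Lemma~\ref{convexpbk}.

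The two constructions are mutually inverse because Joyal duality and Lemma~\ref{lem:canonical-iso-hahaha} are bijections on the relevant classes of maps, and the formulas above give an exact two-way translation. The principal obstacle is juggling the two different dualities at once---Joyal duality reverses arrow direction on vertical maps, while Lemma~\ref{lem:canonical-iso-hahaha} preserves arrow direction on horizontal maps---and tracking how commutativity in $\simplexcategory$ encodes precisely the pullback condition in $\un\simplexcategory$. Once the walls-and-dots combinatorics is unwound, the matching of both sides is transparent.
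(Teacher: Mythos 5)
Your proof is correct, but it takes a genuinely different route from the paper's. The paper avoids all coordinate computations: it observes that a pullback square along a convex map is determined by its bottom-left cospan $\un n'\to\un k'\lat\un k$ (by Lemma~\ref{convexpbk}), translates that cospan to a composable pair $[k]\rat[k']\genmap[n']$ via Lemmas~\ref{Joyal-duality} and~\ref{lem:canonical-iso-hahaha}, and then invokes unique generic--free factorisation of the composite to reconstruct the top-right corner of the $\simplexcategory$-square. You instead build both directions of the bijection explicitly in walls-and-dots coordinates, and your formulas ($n_a=g'(a)$, $f'(s)=g'(a+s)-g'(a)$, etc.) are all correct; this has the virtue of making the correspondence completely concrete, at the cost of having to verify commutativity and the pullback condition by hand. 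Two small points deserve attention. First, in the reverse direction, evaluating $j'(f'(\ell))=g'(i'(\ell))$ only at $\ell=0,k$ identifies the correct convex subset $\un n=g^{-1}(\un k)$, but to conclude that the translated square \emph{is} the canonical pullback you also need the vertical map $\un n\to\un k$ to be the restriction of $g$; this follows from the same identity for all $\ell$ (it gives $f'(\ell)=g'(a+\ell)-g'(a)$, which dualises to the restriction), so say so explicitly. Second, your closing claim that mutual inverseness follows because Lemma~\ref{lem:canonical-iso-hahaha} is a bijection ``on the relevant classes of maps'' is not quite right for the top horizontal map when $n=0$ (the lemma fails there, as the paper notes); the argument still works because commutativity together with genericity of $f'$ forces $j'(0)=g'(a)$, so the top free map is determined by the rest of the square---but this needs to be said, and it is exactly the point the paper's factorisation argument handles structurally for free.
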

\noindent
In the case $k=0$, we necessarily have $n=0$ and $n'=k'$, but 
there is not even a bijection on the bottom arrows in this case.  
\begin{proof}
  The bijection is the composite of the three bijections
  $$
  \left\{\vcenter{\xymatrix{
     \un n' \ar[d]&\ar@{ >->}[l] \dlpullback \un n \ar[d] \\
  \un    k' &\ar@{ >->}[l] \un  k}} \right\}
   =   
   \left\{\vcenter{\xymatrix{
      \un n' \ar[d] & \\
  \un k' &\ar@{ >->}[l] \un  k}} \right\}
   = 
  \left\{\vcenter{\xymatrix{
      [n'] &\\
[k'] \ar@{->|}[u] & \ar@{ >->}[l]  [k] 
}} \right\} = 
  \left\{\vcenter{\xymatrix{
     [n']  & \ar@{ >->}[l] [n] \\
      [k'] \ar@{->|}[u] & \ar@{ >->}[l] [k] \ar@{->|}[u]
}} \right\}
  $$
   where the first bijection is by existence of pullbacks along convex maps 
   (Lemma \ref{convexpbk}), the 
   second is by Lemmas~\ref{Joyal-duality} and~\ref{lem:canonical-iso-hahaha}
   (here we use that $k>0$), and the
   third is by unique generic--free factorisation of the composite
   $[k]\rat[k']\genmap[n']$. It can be checked that the bijection between 
   the right-hand arrows is again that of Lemma~\ref{Joyal-duality}.
   In fact, the bijection is
    $$
  \left\{\vcenter{\xymatrix{
     \un a_1+\un n+\un a_2 \ar[d]_{\un g_1+\un f+\un g_2}&\ar@{ >->}[l] \dlpullback \un n \ar[d]^{\un f} \\
  \un b_1+\un    k+\un b_2 &\ar@{ >->}[l] \un  k}} \right\}
=
 \left\{\vcenter{\xymatrix{
     [a_1]\vee[n]\vee[a_2]  & \ar@{ >->}[l] [n] \\
      [b_1]\vee[k]\vee[b_2] \ar@{->|}[u]^{g_1\vee f\vee g_2} & \ar@{ >->}[l] [k] \ar@{->|}[u]_f
    }} \right\}
_{\raisebox{0.6ex}[0ex][0ex]{.}}
  $$
\end{proof}

\begin{blanko}{Identity-extension squares.}\label{iesq}
  A square in $\un \simplexcategory$ is called   is called an {\em identity-extension square (iesq)} if is it of the form
    \begin{equation}\label{iesq-unD}\vcenter{\xymatrix{
    \un a+\un n+\un b  \ar[d]_{\id_a+f+\id_b} & \un n \ar@{ >->}[l]_-j \ar[d]^f &   \\
     \un a+\un k+\un b  & \un k \,, \ar@{ >->}[l]^-i & 
  }}\end{equation}
where $i$ and $j$ are convex.   Note that 
  an iesq 
  is both a pullback and a pushout.
\end{blanko}

\begin{lemma}\label{lem:iesq-genfree}
	Under the correspondence of Lemma~\ref{lem:free-gen-Delta}, 
	identity-extension squares in $\unDelta$ correspond to
	generic-free pushouts in $\simplexcategory$.
\end{lemma}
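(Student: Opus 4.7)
The plan is to read off the claim directly from the bijection of Lemma~\ref{lem:free-gen-Delta}, matching the special identity-extension form on both sides. First, an iesq \eqref{iesq-unD} is in particular a pullback along the convex maps $i$ and $j$, so Lemma~\ref{lem:free-gen-Delta} transports it to a commutative square
$$
\xymatrix{
     [a]\vee[n]\vee[b]  & \ar@{ >->}[l] [n] \\
      [a]\vee[k]\vee[b] \ar@{->|}[u] & \ar@{ >->}[l] [k] \ar@{->|}[u]_f
}
$$
in $\simplexcategory$, with free maps horizontally and generic maps vertically.

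The next step is to pin down the left vertical explicitly, using the closing formula in the proof of Lemma~\ref{lem:free-gen-Delta}, which expresses the bijection in the form $\un g_1 + \un f + \un g_2 \leftrightarrow g_1 \vee f \vee g_2$. Since $\id_{\un a}$ and $\id_{\un b}$ correspond under Joyal duality (Lemma~\ref{Joyal-duality}) to $\id_{[a]}$ and $\id_{[b]}$, the left vertical $\id_a + f + \id_b$ of the iesq is sent to the generic map $\id_{[a]} \vee f \vee \id_{[b]}$.

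It remains to recognise squares of this shape as generic-free pushouts in $\simplexcategory$, and conversely. Every free map out of $[k]$ in $\simplexcategory$ is of the form $[k] \rat [a]\vee[k]\vee[b]$; pushing it out against a generic map $f : [k]\genmap [n]$ yields the cocone $[a]\vee[n]\vee[b]$ with generic leg $\id_{[a]} \vee f \vee \id_{[b]}$ and free leg $[n] \rat [a]\vee[n]\vee[b]$. This is exactly the square extracted above, and every generic-free pushout in $\simplexcategory$ is uniquely of this form for some triple $(a, f, b)$. Consequently, iesqs in $\unDelta$ correspond bijectively to generic-free pushouts in $\simplexcategory$.

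The one delicate point is keeping track of which arrows get reversed under Joyal duality (the verticals are reversed, the horizontals are not), but once the explicit formula from the proof of Lemma~\ref{lem:free-gen-Delta} is on the table, both directions of the correspondence reduce to matching identities on each ordinal-sum factor.
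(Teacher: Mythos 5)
Your argument is correct and is precisely the verification the paper intends: the lemma is stated there without proof, as an immediate consequence of the explicit formula $\un g_1+\un f+\un g_2 \;\leftrightarrow\; g_1\vee f\vee g_2$ closing the proof of Lemma~\ref{lem:free-gen-Delta}, and your writeup simply specialises that formula to $g_1=\id$, $g_2=\id$ and matches the resulting squares against the standard description of generic-free pushouts recalled in \ref{generic-and-free}. The only (harmless) caveat is that, exactly as in the paper's own statement, the case $\un k=\un 0$ falls outside the scope of Lemma~\ref{lem:free-gen-Delta} and is degenerate on both sides.
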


%%%%%%%%%%%%%%%%%%%%%%%%%%%%%%%%%%%%%%%%%%%%%%%%%%
\section{The decomposition space $\ds I$ of layered finite sets}
%%%%%%%%%%%%%%%%%%%%%%%%%%%%%%%%%%%%%%%%%%%%%%%%%%
\label{sec:B}

Let $\I$ be the category of finite sets and injections.  We define and
study the monoidal decomposition space $\ds I$ of {\em layered finite
sets}: finite sets with an ordered partition into any number of
possibly empty layers.  It is equivalent to the monoidal nerve of the
monoidal groupoid of finite sets and bijections, but the layering viewpoint
will generalise nicely to the directed case (\S\ref{sec:C}).

\begin{blanko}{The groupoid of $n$-layered finite sets.}\label{layering}
  An $n$-{\em layering}, or just a layering, of a finite set $A$ is a function $p:A\to\un n$.
  We refer to the fibres $A_i=p^{-1}(i)$, $i\in\un n$, as {\em layers}. Layers may be empty.
  We consider the groupoid $\ds I_n:=\I^{\iso}_{/\un n}$ of all  $n$-layerings of finite sets, whose arrows are commutative triangles,
  $$
  \xymatrix@C=2ex{A\drto\rrto^\simeq&&A'\dlto\\&\un n.}
  $$
\end{blanko}

\begin{blanko}{The simplicial groupoid of layered finite sets.}\label{simp-layering}
  We now assemble the groupoids of layered finite sets into a simplicial
  groupoid.  For a generic map $g:[n]\to [m]$ of $\simplexcategory$,
  consider the map $g\upperstar:\I^\iso_{/m} \to \I^\iso_{/n}$ given by
  postcomposition with the corresponding map $\un g:\un m\to\un n$ of
  $\unDelta$ under the correspondence of Lemma~\ref{Joyal-duality},
  $$
  g\upperstar:=\un g\lowershriek:  \I^\iso_{/\un m}\to  \I^\iso_{/\un n},\quad (A{\to}\un m )
  \mapsto (A{\to}\un m {\stackrel{\un g}\to} \un n).
  $$
  To define the outer face maps $d_\bot,d_\top : \I^\iso_{/\un k} \to \I^\iso_{/\un
  {k\!-\!1}}$, we take $A{\to}\un k$ to the pullbacks
  $$\xymatrix{
    A' \drpullback \ar@{^{(}->}[r] \ar[d]_{d_\bot (a):={\un d^\bot}\upperstar(a)} & A \ar[d]^a \\
  \un{k{-}1} \ar[r]_-{\un d^\bot} & \un k ,
}\qquad\qquad \xymatrix{
  A' \drpullback \ar@{^{(}->}[r] \ar[d]_{d_\top (a):={\un d^\top}\upperstar(a)} & A \ar[d]^a \\
  \un{k{-}1} \ar[r]_-{\un d^\top} & \un k ,}
$$
  projecting away the first or the last layer.  We make the specific choice
  that the pullbacks are given by subsets; this will ensure that the
  simplicial object we are defining is strict.  More abstractly, for a free
  map $f:[n]\to [m]$ of $\simplexcategory$, the map $ f\upperstar:
  \I^\iso_{/\un m} \to \I^\iso_{/\un n}$ is defined by pullback along the
  corresponding convex map $\un f:\un n\to\un m$ in {$\unDelta$}, given for
  $n\geq1$ by the correspondence of Lemma~\ref{lem:canonical-iso-hahaha}
  between free maps in $\simplexcategory$ and convex maps in $\unDelta$.
  Note that all maps $[0] \to [n]$ correspond to the unique map
  $\un 0 \to \un n$.
\end{blanko}  

\begin{prop}\label{B/k}
  The groupoids $\ds I_n$ and the maps $g\upperstar,f\upperstar$ above form
  a 
%STRICT%   strict 
  simplicial groupoid $\ds I$, which is 
%STRICT%   a strict category object and 
  a Segal space, and hence a decomposition space.
\end{prop}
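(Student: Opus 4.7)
The plan is to prove three things in succession: (1) the data $\ds I_n$, $g\upperstar$, $f\upperstar$ assemble into a strict simplicial groupoid $\ds I$; (2) $\ds I$ satisfies the Segal condition; (3) a Segal space is automatically a decomposition space. The last two steps are formalities once the first is established.

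For the simplicial structure, any map $\phi\colon [n]\to[m]$ in $\simplexcategory$ has a unique generic--free factorisation $\phi = f\circ g$, and one sets $\phi\upperstar := g\upperstar\circ f\upperstar$. Functoriality on purely generic composites follows from Joyal duality (Lemma~\ref{Joyal-duality}) combined with the functoriality of postcomposition in $\unDelta$. Functoriality on purely free composites follows from the pasting lemma for pullbacks in $\Set$, where strictness is guaranteed by the explicit choice of pullbacks as subsets. The nontrivial case is the interaction of the two: composing two generic--free factored maps $\phi_1\phi_2 = f_1 g_1 f_2 g_2$ requires rewriting the middle pair as $g_1 f_2 = f_2' g_1'$ via a generic--free pushout in $\simplexcategory$, so functoriality reduces to the square identity $f_2\upperstar\circ g_1\upperstar = (g_1')\upperstar\circ (f_2')\upperstar$. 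By Lemmas~\ref{lem:free-gen-Delta} and~\ref{lem:iesq-genfree}, this generic--free pushout corresponds to a pullback (in fact identity-extension) square along convex maps in $\unDelta$, and the required identity is the Beck--Chevalley equation
\[
(\un f_2)\upperstar\circ(\un g_1)\lowershriek \;=\; (\un g_1')\lowershriek\circ(\un f_2')\upperstar,
\]
which follows from pullback pasting in $\Set$, i.e.~from the fact that $\un p_2' \simeq \un p_1 \times_{\un n_1} \un n_2$.

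For the Segal condition, observe that $\ds I_0 = \I^\iso_{/\un 0}$ is contractible, since only $\varnothing$ admits a function to $\un 0 = \varnothing$. The Segal condition thus reduces to showing that the spine map $\ds I_n \to \ds I_1^n$ is an equivalence. This map sends a layered set $p\colon A\to\un n$ to its tuple of fibres $(p^{-1}(1), \ldots, p^{-1}(n))$, with inverse given by disjoint union layered by the canonical projection; these are manifestly mutually inverse equivalences of groupoids.

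Finally, a Segal space is automatically a decomposition space, as recorded in~\cite{GKT:DSIAMI-1}: the explicit Segal description $X_n \simeq X_1\times_{X_0}\cdots\times_{X_0}X_1$ makes it a routine check that every generic--free pushout square in $\simplexcategory$ is sent to a pullback of groupoids. The principal obstacle in the proof is thus the Beck--Chevalley check in step (1); the Segal verification and the passage Segal $\Rightarrow$ decomposition space are formalities once the simplicial object has been constructed.
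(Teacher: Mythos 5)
Your proof is correct in substance, but it is organised quite differently from the paper's. The paper verifies the simplicial identities by direct enumeration: it lists the nine generating relations involving the outer face maps, translates each into a Beck--Chevalley equation between a pullback (upperstar) and a postcomposition (lowershriek) operation, and checks each one against the specific pullback squares in $\unDelta$ recorded in Lemmas~\ref{lem:somepbk1}--\ref{lem:po=pbk}. You instead invoke the generic--free factorisation system and reduce all mixed identities to a single interchange law for an arbitrary commutative square of generic against free maps, which via Lemma~\ref{lem:free-gen-Delta} is Beck--Chevalley for a pullback along convex maps in $\unDelta$. This is essentially the `more elegant' route that the paper only sets up later (the nabla-space and sesquicartesian-fibration machinery of \S\ref{sec:nabla}--\S\ref{sec:RSp->iesq}; compare the remark in the proof of Theorem~\ref{RisDS}): one uniform lemma replaces the case-by-case list, at the cost of having to argue that the factorisation-based assignment is well defined and functorial. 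Your Segal verification (contractibility of $\ds I_0$ plus disjoint union as inverse) and the appeal to Segal $\Rightarrow$ decomposition space coincide with the paper's.

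Two corrections are needed. First, your parenthetical claim that the refactoring square $g_1 f_2 = f_2' g_1'$ is ``in fact identity-extension'' is false in general: for instance $d^2:[1]\rat[2]$ followed by $s^1:[2]\genmap[1]$ composes to the identity, and the resulting commutative square of generic against free maps is not a generic--free pushout (equivalently, the corresponding square in $\unDelta$ is a pullback along convex maps but not an identity-extension square). Fortunately your argument only uses that the square is a pullback, which is exactly what Lemma~\ref{lem:free-gen-Delta} provides, so nothing breaks---but Lemma~\ref{lem:iesq-genfree} should not be cited here, and the symbols $\un p_2'$, $\un p_1$, $\un n_1$, $\un n_2$ in your displayed equation are never introduced. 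Second, Lemma~\ref{lem:free-gen-Delta} carries the restriction $k>0$; the free maps out of $[0]$ all get sent to the unique map $\un 0\to\un n$, so the identities passing through $\ds I_0=\I^\iso_{/\un 0}\simeq 1$ need a separate (trivial) word, as the set-up in \ref{simp-layering} anticipates.
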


\begin{proof}
  The generic-generic simplicial identities are already known to hold
  by construction, because they correspond under $\Deltagen\op \simeq
  \unDelta$ to identities in $\unDelta$.
  
  We need to check the following nine simplicial identities
  involving outer face maps:
  \begin{align*}
&&\makebox[0em][l]{\hspace*{-43mm}$d_\top \circ d_\bot = d_\bot \circ d_\top$}\\
	d_\bot \circ d_\bot  &= d_\bot \circ d_1&	d_\top \circ d_\top  &= d_\top \circ d_{\top-1}\\
	d_\bot \circ s_\bot &= \id &d_\top \circ s_\top &= \id \\
	s_k \circ d_\bot &= d_\bot \circ s_{k+1}&s_k \circ d_\top &= d_\top \circ s_{k}\\
	d_k \circ d_\bot &= d_\bot \circ d_{k+1}&d_k \circ d_\top &= d_\top 
	\circ d_{k} .
  \end{align*}
  These relations, according to the definitions we have given of outer 
  face maps in $\ds I$, translate into the following relations between 
  pullback (upperstar) and postcomposition (lowershriek) operations, using 
  the dictionary compiled in Lemma~\ref{Joyal-duality}.
  \begin{align*}
&&\makebox[0em][l]{\hspace*{-55mm}$\un d^\top{}\upperstar  \circ \un d^\bot{}\upperstar = \un d^\bot{}\upperstar \circ \un d^\top{}\upperstar$} \\ 
    {\id\lowershriek} \circ {\un d^\bot}\upperstar \circ \un d^\bot{}\upperstar &= {\un d^\bot}\upperstar \circ {\un s^\bot} \lowershriek&
    {\id\lowershriek} \circ {\un d^\top}\upperstar \circ \un d^\top{}\upperstar &= {\un d^\top}\upperstar \circ {\un s^\top} \lowershriek\\
    {\un d^\bot}\upperstar \circ {\un d^\bot}\lowershriek &= {\id\lowershriek}\circ \id\upperstar &
    {\un d^\top}\upperstar \circ {\un d^\top}\lowershriek &= {\id\lowershriek}\circ \id\upperstar \\
    {\un d^k} \lowershriek \circ \un d^\bot{}\upperstar&= {\un d^\bot}\upperstar  \circ {\un d^{k+1}}\lowershriek &
    {\un d^k} \lowershriek \circ \un d^\top{}\upperstar&= {\un d^\top}\upperstar  \circ {\un d^{k}}\lowershriek \\
    {\un s^{k-1}}\lowershriek  \circ \un d^\bot{}\upperstar &= {\un d^\bot}\upperstar \circ{\un s^k}\lowershriek &
    {\un s^{k-1}}\lowershriek  \circ \un d^\top{}\upperstar &= {\un 
	d^\top}\upperstar \circ{\un s^{k-1}}\lowershriek .
  \end{align*}
  The first of these is induced from a commutative square in $\unDelta$.
  The other eight hold by Beck--Chevalley, since the squares in $\unDelta$ are 
  pullbacks by Lemmas~\ref{lem:somepbk1}--\ref{lem:po=pbk}.
  
%   \tiny
  
% Short proof:  
%   There are five cases, each corresponding to a simplicial identity involving
%   outer face maps.  Each of these identities then correspond to pullback along 
%   some maps in $\unDelta$ combined with postcomposing with other.
%   In each case the identity follows from Beck--Chevalley since these squares
%   are pullbacks in $\unDelta$.
  
%   \normalsize
  
  The simplicial identities can be arranged to
  hold on the nose: the only subtlety is the pullback construction
  involved in defining the outer face maps, but these pullbacks
  can all be chosen to be always actual subset inclusions.

  Finally, since $\I^\iso_{/\un{0}}\simeq 1$, the Segal condition
  says (for each $m,n$) the projection map
  $
  \I^\iso_{/\un{m{+}n}}  \to
  \I^\iso_{/\un{m}}  \times \I^\iso_{/\un{n}}
  $
  must be an equivalence.  But this is clear, since an inverse is given by
  sending $(A{\to}\un m,B{\to}\un n)$ to $A{+}B\to \un{m{+}n}$. 
%STRICT%   If we assume $\I$ to be skeletal (as we do),
%STRICT%   then this equivalence is an isomorphism, so that also the strict
%STRICT%   Segal condition holds.
\end{proof}

\begin{lemma}\label{lem:BB-local}
  The decomposition space $\ds I$ is complete, locally finite, locally discrete,
  and of locally finite length.
\end{lemma}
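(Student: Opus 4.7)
The plan is to verify each of the four properties directly from the explicit description of $\ds I_n = \I^\iso_{/\un n}$. All four conditions are measured on generic maps emanating from $\ds I_1$, namely $s_0 : \ds I_0 \to \ds I_1$, $d_1 : \ds I_2 \to \ds I_1$, and, for locally finite length, the principal generic face maps $\ds I_n \to \ds I_1$. Under the dictionary of \ref{simp-layering}, these are all postcompositions with the canonical maps $\un n \to \un 1$ in $\unDelta$.

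Completeness is immediate: $\ds I_0 \simeq 1$, since the empty set is the only set admitting a map to $\un 0$, and $s_0$ picks out the $1$-layered empty set, whose automorphism group is trivial, so $s_0$ is fully faithful and in particular mono. For local finiteness, $\ds I_1 \simeq \I^\iso$ has automorphism groups the symmetric groups, which are finite. The fibre of $s_0$ over $A$ is either empty or a point, and the fibre of $d_1$ over $A$ is the set of $2$-layerings of $A$, of cardinality $2^{|A|}$, hence finite.

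The main point of care is local discreteness, where one must work with homotopy fibres rather than strict fibres. An object of the homotopy fibre of $d_1$ over $A\in\ds I_1$ is a $2$-layering $(B,b)$ equipped with a bijection $B\isopil A$, and a morphism in the fibre must commute with the chosen bijection, which forces it to be the identity on underlying sets. Hence the fibre is equivalent to the discrete set $\un{2}^A$ of functions $A\to\un 2$; the same reasoning handles $s_0$. Finally, for locally finite length, the map $\ds I_n\to\ds I_1$ sends $(A,A\to\un n)$ to $A$; an $n$-layering is non-degenerate iff no layer is empty, iff the map $A\to\un n$ is surjective, which for fixed $A$ forces $n\leq|A|$, providing the required upper bound.
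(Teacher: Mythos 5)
Your proof is correct and follows exactly the route the paper intends: the paper's own proof of this lemma simply says the checks are straightforward and points to the analogous Lemma~\ref{lem:Cfinite} for $\ds C$, and your verifications (completeness via $\ds I_0\simeq 1$, finiteness and discreteness of the fibres of $s_0$ and $d_1$, and the bound $n\leq|A|$ for non-degenerate layerings) are precisely the $\ds I$-versions of the arguments given there.
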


\begin{proof}
  The checks are straightforward verifications.  (Some 
  indications can be found in the similar Lemma~\ref{lem:Cfinite}.)
\end{proof}

\begin{prop}\label{prop:dec-bb-is-ii}
  The lower Dec of $\ds I$ is
  naturally equivalent to $\fatnerve\I$, the fat nerve of finite sets and injections.
  This equivalence identifies a map
  $A \to \un k$ with the string of $k-1$ injections
  $$
  A_1 \into A_1 + A_2 \into \dots \into A_1 + \cdots + A_{k-1} \into A_1 + 
  \cdots + A_k  .
  $$
  
  (Similarly, the upper dec $\Dec_\top(\ds I)$ is naturally equivalent to 
  $\fatnerve\I\op$.)
\end{prop}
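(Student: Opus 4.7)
The plan is to write down an explicit equivalence levelwise and then check that it is simplicial. At level $n$, $\Dec_\bot(\ds I)_n = \ds I_{n+1} = \I^\iso_{/\un{n+1}}$, and I would define the comparison
\[ \Phi_n : \Dec_\bot(\ds I)_n \longrightarrow \fatnerve(\I)_n \]
by sending a layered set $p : A \to \un{n+1}$ with fibres $A_i = p^{-1}(i)$ to the string of subset inclusions
\[ A_1 \hookrightarrow A_1 + A_2 \hookrightarrow \cdots \hookrightarrow A_1 + \cdots + A_{n+1}. \]
A morphism in $\ds I_{n+1}$ is a bijection $A \simeq A'$ over $\un{n+1}$; it restricts to bijections $A_i \simeq A'_i$, and these assemble into a compatible family of bijections between the partial unions, giving a morphism in $\fatnerve(\I)_n$.

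Next I would verify simplicial naturality. Recall that $d_k^{\Dec} = d_{k+1}^{\ds I}$ and $s_k^{\Dec} = s_{k+1}^{\ds I}$, and that by Joyal duality (\ref{simp-layering}) the inner face $d_{k+1}^{\ds I}$ merges the layers $k{+}1$ and $k{+}2$, the outer face $d_{n+1}^{\ds I} = d_\top$ deletes the last layer, and the degeneracy $s_{k+1}^{\ds I}$ inserts an empty layer in position $k{+}2$. On the other side, the face map $d_k^{\fatnerve}$ on a string either composes the two injections at the $k$-th object (for $0 < k < n$) or drops the zeroth or last object (for $k = 0$ or $k = n$), while $s_k^{\fatnerve}$ repeats the $k$-th object. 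Direct inspection shows these match under $\Phi$: for instance, $d_0^{\fatnerve}$ drops $B_0 = A_1$, leaving a string whose zeroth object is $A_1 + A_2$, which corresponds under $\Phi$ precisely to the layering obtained from $(A_1,\ldots,A_{n+1})$ by merging the first two layers, i.e.~to $d_1^{\ds I}$. The remaining cases are analogous.

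To see that each $\Phi_n$ is an equivalence of groupoids, I would construct a quasi-inverse by taking successive complements. Given $B_0 \hookrightarrow B_1 \hookrightarrow \cdots \hookrightarrow B_n$, set $A := B_n$, let $\widetilde B_i \subseteq B_n$ be the image of $B_i$ in $B_n$ (with $\widetilde B_{-1} = \emptyset$), and define the layering $A_i := \widetilde B_{i-1} \setminus \widetilde B_{i-2}$; the canonical inclusions $\widetilde B_i \hookrightarrow A$ identify the original string up to isomorphism with $\Phi_n$ of this layering, giving essential surjectivity. Full faithfulness is immediate: a morphism of strings is a compatible sequence of bijections $B_i \simeq B_i'$, and compatibility with the injections forces it to be induced by a single bijection $B_n \simeq B_n'$ preserving the flag of subimages, which is precisely a layer-preserving bijection.

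The main subtlety is the choice of pullback representatives: since pullbacks are defined only up to canonical isomorphism, one must check that the strict simplicial structure assembled on $\ds I$ in Proposition~\ref{B/k} (where the outer-face pullbacks are chosen to be subset inclusions) matches the strict simplicial structure on $\fatnerve \I$ on the nose under $\Phi$. This is ensured by the convention made in \ref{simp-layering} and requires no further homotopical manipulation. Finally, the parenthetical statement about $\Dec_\top(\ds I) \simeq \fatnerve \I\op$ is formally dual: replace partial sums from the left by partial sums from the right, so that a layering $p : A \to \un{n+1}$ is sent to the reverse string $A_{n+1} \hookrightarrow A_n + A_{n+1} \hookrightarrow \cdots \hookrightarrow A_1 + \cdots + A_{n+1}$, and the same verifications go through with the roles of $d_\top$ and $d_\bot$ exchanged.
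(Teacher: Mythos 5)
Your argument is correct. Note that the paper does not actually prove this proposition in-text --- it defers to \cite{GKT:ex} --- so there is no internal proof to compare against; but your levelwise construction (partial unions giving the flag of subset inclusions, the index bookkeeping $d_k^{\Dec}=d_{k+1}$ matching merging of layers $k{+}1$ and $k{+}2$, and the quasi-inverse by successive complements of images) is exactly the expected verification, and it mirrors the strategy the paper does spell out for the poset analogue in Proposition~\ref{prop:Dec(C)}, where the layering $P\to\un n$ is likewise identified with a chain of (lower-set) inclusions and fully faithfulness is checked by observing that automorphisms of the chain correspond to automorphisms of the total object preserving the flag. Your attention to the choice of subset-inclusion representatives for the pullbacks, ensuring the identification is strictly simplicial rather than merely up to coherent isomorphism, is also consistent with the conventions of \ref{simp-layering} and \S\ref{sec:strict}.
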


We refer to \cite{GKT:ex} for a proof.  The fat nerve of finite sets
and injections is the approach of D\"ur~\cite{Dur:1986} to the
binomial coalgebra, as explained in~\cite{GKT:ex}. 

\begin{lemma}\label{lem:BB-monoidal}
  $\ds I$ is a monoidal decomposition space under disjoint union.
\end{lemma}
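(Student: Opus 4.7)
The plan is to define layerwise disjoint union as a simplicial functor $\tensor : \ds I \times \ds I \to \ds I$, observe that it is a symmetric monoidal structure, and then verify the \culf condition. At level $n$ define
$$
\tensor_n : \I^\iso_{/\un n} \times \I^\iso_{/\un n} \longrightarrow \I^\iso_{/\un n},
\qquad (A\to \un n,\ B\to \un n) \;\longmapsto\; (A+B\to \un n),
$$
with the layering on $A+B$ given by the copairing; the unit is $\emptyset \to \un n$. The monoidal axioms at each level are inherited from the symmetric monoidal groupoid $(\I^\iso,+,\emptyset)$ of finite sets and bijections.

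Next I would check simpliciality. For a generic map $g:[m]\genmap[n]$ in $\simplexcategory$, the structure map $g\upperstar = \un g\lowershriek$ is postcomposition with the corresponding $\un g:\un n \to \un m$ of Lemma~\ref{Joyal-duality}, and postcomposition trivially preserves binary coproducts of sets. For a free map $f:[n]\rat[m]$, $f\upperstar$ is pullback along the convex inclusion $\un f:\un n \rat \un m$ of Lemma~\ref{lem:canonical-iso-hahaha}, and pullback of sets distributes over binary disjoint union (extensivity of $\Set$). Since the pullback conventions of~\ref{simp-layering} are via subset inclusions, and intersection distributes strictly over disjoint union, the resulting $\tensor$ is a strict simplicial functor.

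The main thing to verify is that $\tensor$ is \culf, i.e.~that for every generic $g:[m]\genmap[n]$ the square
$$\xymatrix{
\ds I_n \times \ds I_n \ar[r]^-{\tensor_n} \drpullback \ar[d]_{g\upperstar \times g\upperstar} & \ds I_n \ar[d]^{g\upperstar} \\
\ds I_m \times \ds I_m \ar[r]_-{\tensor_m} & \ds I_m
}$$
is a pullback of groupoids. An object of the pullback is a triple $(A\to \un m,\ B\to \un m,\ C\to \un n)$ together with an iso $A+B \isopil C$ of layered sets over $\un m$. But such an iso is precisely a set-theoretic decomposition $C = A\amalg B$ of the underlying set, and restricting the layering $C\to\un n$ to each summand produces layered sets $A'\to\un n$ and $B'\to\un n$ over $\un n$ lifting $A$ and $B$, with $A'+B' = C$ over $\un n$; this lift is evidently unique up to canonical iso. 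Hence the square is cartesian.

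The step I expect to require the most care is ensuring that the pullback-choice conventions in the definition of the outer face maps (\ref{simp-layering}) are coherent with the chosen disjoint-union model, so that $\tensor$ really is simplicial on the nose rather than only up to iso; this reduces to the observation that subset-pullback commutes strictly with disjoint union, so nothing beyond set-theoretic extensivity is needed. Everything else --- monoidality, naturality, and \culf ness --- is formal from the corresponding properties of coproduct in $\I$.
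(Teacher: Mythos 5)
Your proof is correct and follows essentially the same route as the paper: define $\tensor$ levelwise by disjoint union and observe that a layering of a disjoint union is the same thing as a pair of layerings, which is exactly the \culf condition. The only presentational difference is that the paper checks cartesianness just on the square involving the unique generic map $g:\ds I_k\to\ds I_1$ (which suffices by pasting of pullbacks), whereas you verify the general generic naturality square and spell out the strict simpliciality that the paper declares to be clear.
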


\begin{proof}
  As the proof of Lemma~\ref{lem:CC-monoidal}, but changing $\C$ to $\I$ and $\ds C$ to $\ds I$ everywhere.
\end{proof}

%%%%%%%%%%%%%%%%%%%%%%%%%%%%%%%%%%%%%%%%%%%%%%%%%%
\section{Restriction species}
%%%%%%%%%%%%%%%%%%%%%%%%%%%%%%%%%%%%%%%%%%%%%%%%%%

\label{sec:RSp}

\begin{blanko}{Schmitt's restriction species.}
  Recall that $\I$ denotes the category of finite sets and injections.
  Schmitt~\cite{Schmitt:hacs} defines restriction species to be
  presheaves on $\I$,
  \begin{eqnarray*}
    R:\I\op & \longrightarrow & \Set  \\
    A & \longmapsto & R[A] .
  \end{eqnarray*}
  An element $X$ of $R[A]$ is called an {\em $R$-structure} on the set $A$.
  Compared to a classical species~\cite{JoyalMR633783}, a restriction species
  $R$ is thus functorial not only in bijections but also in injections, meaning
  that an $R$-structure on a set $A$ induces also such a structure on every
  subset $B \subset A$ (denoted with a restriction bar):
  \begin{eqnarray*}
    R[A] & \longrightarrow & R[B]  \\
    X & \longmapsto & X|B .
  \end{eqnarray*}

  A morphism of restriction species is just a natural transformation
  $R\Rightarrow R'$ of functors $\I\op\to\Set$, i.e.~for each finite set $A$ a
  map $R[A]\to R'[A]$, natural in $A$. 
\end{blanko}

\begin{blanko}{Schmitt construction.}\label{Schmitt-comult}
  The Schmitt construction~\cite{Schmitt:hacs} associates to a restriction
  species $R: \I\op\to\Set$ a (cocommutative) coalgebra structure on the
  vector space spanned by the isoclasses of $R$-structures: the
  comultiplication is
  $$
  \Delta(X)=
  \sum_{A_1+A_2=A} X|A_1 \tensor X|A_2 , \qquad X \in R[A] ,
  $$
  and the counit sends $X\in R[\varnothing]$ to $1$ and other structures to $0$.
  
  Since the summation in the
  comultiplication formula only involves the underlying sets, it is readily seen
  that a morphism of restriction species induces a coalgebra homomorphism.

  A great many (cocommutative) combinatorial coalgebras can be realised by
  the Schmitt construction (see \cite{Schmitt:hacs} and also
  \cite{Aguiar-Mahajan}).  For example, graphs (\ref{ex:graphs}), matroids,
  simplicial complexes, posets, categories, etc., form restriction species
  and hence coalgebras.  In many cases, disjoint union furthermore defines
  an algebra structure, and altogether a bialgebra.  Finally, in most
  cases, $R[\varnothing]$ is singleton.  This implies that the bialgebra is
  connected and hence a Hopf algebra.  Schmitt actually includes this
  condition in his definition of restriction species.  In the present work,
  we shall {\em not} assume $R[\varnothing]$ singleton.
\end{blanko}

\begin{blanko}{Groupoid-valued species.}\label{Gr-sp}
  In line with our general philosophy, we shall work with groupoids
  rather than sets, aspiring to a native treatment of symmetries.
  Groupoid-valued species were first advocated by Baez and
  Dolan~\cite{Baez-Dolan:finset-feynman} (who called them {\em stuff
  types}, as opposed to {\em structure types}, their translation of Joyal's
  {\em esp\`eces de structures}~\cite{JoyalMR633783}), for the sake of 
  dealing with symmetries of Feynman
  diagrams.  They showed also that over groupoids (but not over sets),
  the generating function of a species is the homotopy cardinality of
  its associated analytic functor.  Furthermore, over groupoids,
  analytic functors are polynomial~\cite{Kock:MFPS28}, meaning that
  they are given by pullback functors and their adjoints.  Since the 
  decomposition-space machinery is based on homotopy pullbacks and 
  homotopy cardinality, we may as well consider groupoid-valued 
  species, which we do from now on. 
  
  For the sake of taking cardinality, it is furthermore natural to require
  the groupoid values to be locally finite.  This means that every object
  has finite automorphism group.  This is usually the case of combinatorial
  objects.  In particular, every set (finite or not) is locally finite.  So
  a classical species is always locally finite.
\end{blanko}

\begin{blanko}{Restriction species.}
  A {\em 
%STRICT%   (strict) 
  restriction species}
  is a groupoid-valued presheaf on $\I$,
  \begin{eqnarray*}
    R:\I\op & \longrightarrow & \Grpd  \\
    A & \longmapsto & R[A] .
  \end{eqnarray*}
  A morphism of restriction species is a
%STRICT%   (strict) 
  natural transformation.
  We actually allow pseudo-functors and pseudo-natural 
  transformations, but make some remarks on the strict case in 
  \S\ref{sec:strict}.
  This defines the category $\kat{RSp}$ of restriction species.
  
A restriction species corresponds, by the Grothendieck construction, to a 
%STRICT% split
right fibration
(i.e.~a
%STRICT% split 
cartesian fibration with groupoid fibres)
$$
\R \to \I .
$$
Here $\R$ is the category of elements of $R$, whose objects are
$R$-structures and whose arrows are structure-preserving injections.  More
precisely, an object is a pair $(A,X)$ where $A$ is a finite set and
$X\in R[A]$, and a morphism
$(A',X')\to(A,X)$ is an injection $A'\to A$ in $\I$ and an arrow
$X' \isopil X|A'$ in the groupoid $R[A']$.
The category of restriction species
is canonically equivalent to the categories of groupoid-valued 
presheaves on $\I$, and of right fibrations over $\I$:
$$
\kat{RSp}\simeq\Grpd^{\I\op}\simeq\kat{RFib}_{/\I} .    
$$
It is sometimes more informative to describe a restriction species by describing
the right fibration $\R\to\I$ rather than describing the functor 
$R:\I\op\to\Grpd$, because 
the description of the category $\R$ already has the specifics about the 
restrictions, encoded in the arrows of the category.  We shall see
this in the examples.
\end{blanko}

%STRICT% \begin{blanko}{Remark.}
%STRICT%   We have defined restriction species to be strict functors $R:\I\op\to\Grpd$,
%STRICT%   corresponding to split right fibrations $\R\to\I$, and required morphisms 
%STRICT%   to be strictly natural transformations, which for the corresponding fibrations 
%STRICT%   means that chosen cartesian arrows are mapped to chosen cartesian arrows.
%STRICT%   These strictness assumptions will be invoked later when we define simplicial
%STRICT%   groupoids and functors, as they will ensure that these are strict.
%STRICT%   
%STRICT%   In the end, however, we are concerned with homotopy invariant notions,
%STRICT%   and we might as well allow simplicial groupoids $X: \simplexcategory\op\to\Grpd$ 
%STRICT%   to be pseudo-functors, and allow their morphisms to be pseudo-natural 
%STRICT%   transformations.  If one does not mind these weak invariant notions, it is not
%STRICT%   necessary either to require strict restriction species or strict morphisms
%STRICT%   between them.  
%STRICT% %   For the sake of emphasising the essential ideas, we have chosen
%STRICT% %   to stick with the strict notions.
%STRICT% \end{blanko}

\begin{blanko}{Examples of restriction species.}\label{ex:RSp}
  (See~\cite{Schmitt:hacs} for these and more examples.)
  
  (1) {\em Graphs.} The species of finite graphs is a restriction
  species, cf.~Example~\ref{ex:graphs}.  It is fruitful to look at it also
  as a right fibration $\G\to \I$: the category $\G$ is then the category
  whose objects are finite graphs, and whose morphisms are full graph
  inclusions.  Full means that if two vertices $x$ and $y$ are in the
  subgraph then all edges between $x$ and $y$ must also be included.
  (Allowing non-full inclusions, such as \begin{texdraw} \move (0 0)\onedot
  \move (10 0) \onedot\end{texdraw} $\into$ \begin{texdraw} \move (0
  0)\onedot \lvec (10 0) \onedot\end{texdraw}, would prevent $\G\to\I$ from
  being a right fibration.)
  
  (2) {\em Matroids.} (See Oxley~\cite{Oxley} for definitions.)
  The species of matroids is a restriction species \cite{Schmitt:hacs}.
  Many important classes of matroids are stable under restriction and 
  are therefore also restriction species.  For example,
  transversal matroids,
  % See Joseph Bonin
  representable matroids,
  % obtained from a matrix over a field by taking E the set of columns
  % and \mathcal{I} consisting of independent sets of columns.
  % It is clead that one can restrict to subsets of columns.
  % Binary matroid and ternary matroid just mean representable over
  % F_2 and F_3
  regular matroids, graphic matroids, bond matroids, planar matroids, and so on.
 
  (3) {\em Posets.} \label{ex:posets} The species of posets is a
  restriction species.  The corresponding right fibration is $\P \to \I$,
  where $\P$ is the category of finite posets and full poset inclusions
  $F\into P$.  `Full' means that for two elements $x, y$ in $F$ we have
  $x\leq_F y$ if and only if $x\leq_P y$.
  
  In \S\ref{sec:DRSp} we shall introduce directed restriction species, based on
  a different category of posets, namely the category $\C$ of finite posets and
  convex maps.  The forgetful functor $\C\to\I$ is {\em not} a right 
  fibration: there is no convex lift of the set inclusion $\{0,2\} \into \{0,1,2\}$
  to the linear order $\{0\leq 1 \leq 2\}$.

  (4) {\em Categories.} The species of finite categories assigns
  to a finite set the groupoid of all finite-category structures
  on that set of objects.  In this case the right fibration is $\F\to\I$,
  where $\F$ is the category of finite categories and full subcategory
  inclusions (or more precisely, injective-on-objects fully faithful
  functors).  The underlying-set functor $\F\to\I$ is a right fibration
  because clearly any subset of the object set of a category determines
  uniquely a full subcategory.
  
  Note: in the examples of graphs and categories we stress the word
  `finite': if we allowed an infinite number of edges/arrows between
  two elements, an infinite automorphism group would result, violating the
  local finiteness assumption made in \ref{Gr-sp}.
\end{blanko}

\begin{blanko}{Slices of examples.}
  Recall that for any object $x$ in a category $\CC$, the domain projection
  $\CC_{/x} \to \CC$ is a right fibration.  In particular, if $\R\to\I$ is a 
  restriction species, for any $R$-structure $X$, the slice category 
  $\R_{/X}$ is again a restriction species.  It is the restriction species 
  of $R$-substructures of $X$.  See Bergner et 
  al.~\cite{Bergner-et.al:1609.02853}
  for examples of slices of the decomposition space of graphs.
  The fact that slicing a restriction species
  produces again restriction species reflects the local nature of 
  coalgebras: every element in a coalgebra generates a coalgebra.
\end{blanko}

\begin{blanko}{Restriction species as decomposition spaces over $\ds I$.}
  From a restriction species $R$, or a 
%STRICT%   split
  right fibration $\R\to\I$, we shall construct a simplicial groupoid $\ds
  R$ of layered $R$-structures, together with a \culf functor $\ds R\to\ds
  I$.

  As in \S\ref{sec:B}, the subtlety is that the obvious functoriality is in
  $\unDelta \simeq \Deltagen\op$, not in all of 
  $\simplexcategory\op$.
  Consider first the functor $\I^{\iso}_{/_-}:\unDelta\to \Grpd$ and form
  the pullbacks
  $$
  \ds R_k = \I^{\iso}_{/\un k} \times_{\I^{\iso}_{/\un 1}} \R^\iso 
  $$
  along the functor $\R^\iso\to\I^\iso=\I^{\iso}_{/\un 1}$.  Thus $\ds R_k$
  is the groupoid of $R$-structures with a $k$-layering of the underlying
  sets.
This defines a diagram of shape $\unDelta=\Deltagen\op$:
$$
R_{\operatorname{gen}}:\unDelta\to\Grpd  .
$$
The pullback construction also shows that forgetting the $R$-structure and 
retaining only the layering of the underlying set provides a cartesian 
natural transformation (of $\Deltagen\op$-diagrams)
$$
R_{\operatorname{gen}}\to I_{\operatorname{gen}}.$$ 

So far the construction works for any species, not necessarily restriction 
species.
To define also the free maps (i.e.~outer face maps)
  we need the restriction structure on $R$, which allows us to lift the 
  outer face maps we constructed for $\ds I$.  Recall that  the outer face map
  $d_\bot : \I^{\iso}_{/\un k} \to \I^{\iso}_{/\un{k{-}1}}$ is defined by 
  sending $A {\to} \un k$ to
  the pullback
  $$\xymatrix{
  A' \ar[d]\drpullback \ar[r]^\subset & A \ar[d]\\
  \underline{k\!-\!1} \ar[r]_-{\un d_\bot} & \underline{k} .
  }$$
  Since $A' \into A$ is an injection, we can use functoriality of $R$
  (the fact that $R$ is a restriction species) to get also the face
  map for $\ds R_k$: for example,
  $$
  d_\bot: 
  \ds R_k \to \ds R_{k-1}
  $$
  is defined as
  $$
  \big(A{\to}\un k ,\,X\big) \mapsto 
  \big(\un d_\bot\upperstar\!A{\to}\un{k{-}1},\,X|\un d_\bot\upperstar\!A\big)   .
  $$
   
  We see that the point is to be covariantly functorial in all maps in
  $\unDelta$ and to be contravariantly functorial in convex 
  maps.  To establish the simplicial identities is to exhibit a 
  certain compatibility between these two
  functorialities.  These conditions are precisely condensed in the notion of
  sesquicartesian fibration which we introduce in \S\ref{sec:sesq} below.
\end{blanko}

\begin{theorem}\label{RisDS}
  Given a restriction species $R$, the above construction defines a 
  simplicial groupoid  $\ds R$, which is a decomposition space.  Furthermore, a morphism of restriction species $R'\to R$
  induces a \culf functor $\ds R' \to \ds R$. 
  These assignments define
  a functor from the category of restriction species to that of decomposition spaces and \culf functors.
\end{theorem}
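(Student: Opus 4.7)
The plan is to first check that the formulas defining $\ds R$ assemble into a genuine simplicial groupoid, then to exhibit the evident forgetful functor $\ds R\to\ds I$ as \culf, and finally to deduce that $\ds R$ is a decomposition space via Proposition~\ref{B/k} and Lemma~\ref{lem:CULF}, with the functoriality statement falling out of the construction.

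For the simplicial structure, the pullback definition $\ds R_k=\I^{\iso}_{/\un k}\times_{\I^{\iso}_{/\un 1}}\R^\iso$ automatically promotes $R_{\operatorname{gen}}$ to a pseudofunctor $\unDelta\to\Grpd$, inheriting the generic part of the simplicial structure of $\ds I$ by base-change along $\R^\iso\to\I^\iso$.  The outer face maps of $\ds R$ are lifts of those of $\ds I$: given a free map $[n]\rat[k]$ corresponding under Lemma~\ref{lem:canonical-iso-hahaha} to a convex inclusion $\un n \rat \un k$, and $(A\to\un k,X)\in \ds R_k$, one pulls back to $A'\hookrightarrow A$ over $\un n$ and then uses functoriality of $R$ on the injection $A'\hookrightarrow A$ to produce $X|A'\in R[A']$.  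Choosing the pullbacks as honest subset inclusions, as in the proof of Proposition~\ref{B/k}, makes the construction strictly simplicial at the level of objects.

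The crux is the verification of the mixed generic-free simplicial identities.  Each corresponds to a commuting square in $\unDelta$ with one pair of opposite sides generic and the other convex; by Lemmas~\ref{lem:somepbk1}--\ref{lem:po=pbk} these squares are pullbacks, and the proof of Proposition~\ref{B/k} showed that the Beck--Chevalley argument applied to $\I^\iso_{/-}$ yields the required identities in $\ds I$.  The same Beck--Chevalley argument lifts to $\ds R$: functoriality of $R$ on injections ensures that $(X|A')|A''=X|A''$ for nested subsets $A''\subset A'\subset A$, and base-change of a Beck--Chevalley square is again Beck--Chevalley.  This establishes the simplicial identities on the nose.  The forgetful map $\ds R\to\ds I$ is cartesian on generic maps, since each $\ds R_k$ is by construction a pullback of $\R^\iso\to\I^\iso$ over $\ds I_k$; pullback pasting identifies each generic-map square of $\ds R\to\ds I$ with an iterated pullback of $\R^\iso\to\I^\iso$, so $\ds R\to\ds I$ is \culf, and since $\ds I$ is a decomposition space by Proposition~\ref{B/k}, Lemma~\ref{lem:CULF} yields that $\ds R$ is also a decomposition space.

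For functoriality in $R$, a morphism $R'\to R$ of restriction species induces a functor $\R'\to\R$ cartesian over $\I$; base-changing at each level yields a simplicial map $\ds R'\to\ds R$ compatible with the forgetful maps to $\ds I$ and cartesian on generic maps (as the base-change of a cartesian functor), hence \culf.  Identities and composition are respected by the universal property of pullbacks, yielding the promised functor.  The main obstacle throughout is the verification of the mixed simplicial identities, where one must systematically track how the Beck--Chevalley reasoning in $\unDelta$ lifts through the restriction-species functoriality to $\ds R$; this is precisely the compatibility that the authors later package into the notion of iesq sesquicartesian fibration.
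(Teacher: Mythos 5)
Your proposal is correct and follows essentially the same route as the paper: check the simplicial identities by hand along the lines of Proposition~\ref{B/k}, observe that the construction is by definition cartesian over the generic part of $\ds I$ so that $\ds R\to\ds I$ is \culf and Lemma~\ref{lem:CULF} applies, and obtain functoriality from the induced maps of right fibrations over $\I$. You in fact supply more detail on the mixed simplicial identities than the paper's own (deliberately terse) proof, which defers the clean argument to the later sesquicartesian-fibration machinery.
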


\begin{proof}
  The simplicial identities in $\ds R$ can be checked by hand, arguing along the lines of
  the proof of Proposition~\ref{B/k}.  (Later we will give a more elegant
   proof using the machinery introduced in
  Sections~\ref{sec:nabla}--\ref{sec:RSp->iesq} and there will be no need for ad hoc
  arguments).  Since by construction the simplicial groupoid $\ds R$ is \culf
  over a decomposition space $\ds I$, it is itself a decomposition space (by
  Lemma~\ref{lem:CULF}).

  A morphism $f:R' \to R$ amounts to a morphism of right fibrations
  $$\xymatrix@C=2ex{
  \R' \ar[rd]\ar[rr] && \R \ar[ld] \\
  & \I &}
  $$
  inducing simplicial maps
  $$\xymatrix@C=2ex{
  \ds R' \ar[rd]\ar[rr]^{\ds f} && \ds R \ar[ld] \\
  & \ds I   . &}
  $$
  Indeed, at level $n$, the morphism of groupoids $\R'{}_{/\un n} \to 
  \R_{/\un n}$ is induced from $\R' \to \R$, since the layering only 
  affects the underlying set which does not change.  Finally, $\ds f$
  is \culf since the projection maps to $\ds I$ are.
\end{proof}

\begin{blanko}{Decalage.}\label{DecR=NR}
  The decomposition space $\ds R$ constructed from the restriction species
  $\R$ can be seen as an `un-decking': we have
  $$
  \Dec_\bot \ds R  \simeq \fatnerve \R , \qquad
  \Dec_\top \ds R \simeq \fatnerve \R\op .
  $$
  We postpone the proof until \ref{cor:DecR=NR}.
\end{blanko}
  
\begin{lemma}\label{lem:R1}
    The groupoid $\ds R_1 = \R^{\iso}$ is locally finite.
\end{lemma}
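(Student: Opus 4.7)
The plan is to unpack the groupoid $\ds R_1 = \R^{\iso}$ explicitly and exhibit each automorphism group as an extension of two groups already known to be finite. An object of $\R^{\iso}$ is a pair $(A,X)$ with $A$ a finite set and $X \in R[A]$, and a morphism $(A,X) \to (A',X')$ is a pair consisting of an injection $\sigma\colon A \hookrightarrow A'$ in $\I$ together with an isomorphism $X \isopil \sigma\upperstar X'$ in the groupoid $R[A]$. (This is just the description of the category of elements/right fibration $\R\to\I$ recalled in the definition of restriction species.)

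First I would restrict to automorphisms: fixing $(A,X)$, an element of $\Aut_{\R}(A,X)$ consists of an injection $\sigma\colon A \hookrightarrow A$, which since $A$ is finite must be a bijection, together with an iso $X \isopil \sigma\upperstar X$ in $R[A]$. Forgetting the second component yields a forgetful group homomorphism
$$
\pi\colon \Aut_{\R}(A,X)\longrightarrow \Aut_{\I}(A) = \mathrm{Sym}(A),
$$
and the fibre of $\pi$ over the identity permutation is by definition $\Aut_{R[A]}(X)$. Acting by translation shows that every fibre is a torsor over $\Aut_{R[A]}(X)$, so $|\Aut_{\R}(A,X)| \leq |\mathrm{Sym}(A)|\cdot|\Aut_{R[A]}(X)|$.

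Next I would invoke the two finiteness inputs. The symmetric group $\mathrm{Sym}(A)$ is finite because $A$ is a finite set. The automorphism group $\Aut_{R[A]}(X)$ is finite because we stipulated in \ref{Gr-sp} that a restriction species is required to take values in \emph{locally finite} groupoids; this is precisely the condition that every object of $R[A]$ has finite automorphism group. Combining the two, $\Aut_{\R}(A,X)$ is finite, as required.

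There is no real obstacle here: the statement is a direct consequence of unwinding the Grothendieck construction and applying the local finiteness convention already built into our definition of groupoid-valued restriction species. The only thing to be careful about is to remember that in $\R$ the arrows are structure-preserving injections (not arbitrary maps), so that an endomorphism in $\R^\iso$ is forced by finiteness of $A$ to be a bijection on underlying sets, which is what allows the forgetful map $\pi$ to land in the finite group $\mathrm{Sym}(A)$.
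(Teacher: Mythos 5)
Your proof is correct and is essentially the paper's argument made explicit: the paper phrases it as the homotopy fibre sequence $R[n] \to \R^{\iso} \to \I^{\iso}$ and concludes local finiteness of the total space from local finiteness of the base $\I^{\iso}$ and of the fibres $R[n]$, which is exactly your extension $\Aut_{R[A]}(X) \to \Aut_{\R}(A,X) \to \mathrm{Sym}(A)$ unwound at the level of automorphism groups. Both use the same two inputs (finiteness of symmetric groups and the standing local-finiteness assumption from \ref{Gr-sp}), so there is nothing to add.
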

\begin{proof}
    For each $n\in \I^{\iso}$ we have a fibre sequence (homotopy pullback)
    $$\xymatrix{
    R[n] \drpullback \ar[r] \ar[d] & \R^{\iso} \ar[d] \\
    1 \ar[r]_-{\name n} & \I^{\iso}  .
    }$$
    Since $\I^{\iso}$ is locally finite, and since $R[n]$ is locally finite
    by our standing assumption, also $\R^{\iso}$ is locally finite.
\end{proof}

\begin{prop}\label{prop:Rlocallydiscrete}
  The decomposition space $\ds R$ is complete, locally finite, locally
  discrete, and locally of finite length.
\end{prop}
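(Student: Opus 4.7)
The plan is to deduce all four properties by leveraging the fact, established in Theorem~\ref{RisDS}, that the structure map $\ds R \to \ds I$ is \culf, and then invoking Lemma~\ref{lem:culf-local} together with Lemma~\ref{lem:BB-local}, which records exactly these properties for $\ds I$.

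For local discreteness and locally finite length, these are properties measured on generic maps (specifically on $s_0$ and on $d_1 : X_2 \to X_1$, cf.~\ref{finite}), so Lemma~\ref{lem:culf-local} applies directly: since $\ds I$ has both properties by Lemma~\ref{lem:BB-local}, so does $\ds R$. For completeness, which asks that $s_0 : R_0 \to R_1$ be a monomorphism, the \culf condition gives that the naturality square
$$\xymatrix{
R_0 \drpullback \ar[d]\ar[r]^{s_0} & R_1 \ar[d] \\
I_0 \ar[r]_{s_0} & I_1
}$$
is a pullback (since $s_0$ is a generic, indeed degeneracy, map). Completeness of $\ds I$ says the bottom map is mono; monos are stable under pullback, so the top map is mono and $\ds R$ is complete.

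Finally, for local finiteness, Lemma~\ref{lem:culf-local} says the property is inherited through a \culf map provided one also verifies that $R_1$ itself is locally finite. This is precisely the content of Lemma~\ref{lem:R1}, which uses the standing assumption from~\ref{Gr-sp} that each groupoid $R[n]$ is locally finite. Combining with local finiteness of $\ds I$ completes the argument.

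I do not expect any serious obstacle: the substantive work has already been done in establishing \culfness (Theorem~\ref{RisDS}), the properties of $\ds I$ (Lemma~\ref{lem:BB-local}), and local finiteness of $R_1$ (Lemma~\ref{lem:R1}). The only point worth care is the verification that completeness fits the ``measured on generic maps'' framework of Lemma~\ref{lem:culf-local}; if one prefers not to invoke the lemma for this property, the direct pullback argument above suffices.
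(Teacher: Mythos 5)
Your proposal is correct and follows essentially the same route as the paper: local finiteness of $\ds R_1$ via Lemma~\ref{lem:R1}, and the remaining properties inherited along the \culf map $\ds R \to \ds I$ via Lemmas~\ref{lem:culf-local} and~\ref{lem:BB-local}. Your explicit pullback argument for completeness is a reasonable elaboration of a point the paper leaves implicit, but it does not change the approach.
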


\begin{proof}
  $\ds R_1$ is locally finite by Lemma~\ref{lem:R1}.  The
  remaining finiteness properties and the discreteness property follow from
  Lemmas~\ref{lem:culf-local} and~\ref{lem:BB-local} since $\ds R$ is
  \culf over $\ds I$.
\end{proof}

\begin{blanko}{Coalgebras.}
  (See~\cite{GKT:DSIAMI-1} and \cite{GKT:DSIAMI-2}.)  To any
  decomposition space $\ds X$, there is associated a coalgebra at the
  objective level, namely a comultiplication functor $\Delta:\Grpd_{/\ds X_1}
  \to \Grpd_{/\ds X_1} \tensor \Grpd_{/\ds X_1}$ and a counit functor
  $\varepsilon : \Grpd_{/\ds X_1} \to \Grpd$.    
  Similarly a \culf functor
  $\ds X' \to \ds X$ induces a coalgebra homomorphism, i.e.~a linear
  functor $\Grpd_{/\ds X'_1} \to \Grpd_{/\ds X_1}$ compatible with the
  coalgebra structures.  If the decomposition spaces are locally
  finite, one can take homotopy cardinality to obtain coalgebras over
  $\Q$ and coalgebra homomorphisms in the classical sense.  It is
  outside the scope of the present paper to go into details,
  and we only sketch the proof of the following proposition which is the
  motivation for channelling the Schmitt construction through
  decomposition spaces.  
\end{blanko}

\begin{prop}
  For $R$ a restriction species, the Schmitt coalgebra of $R$ is the
  homotopy cardinality of the incidence coalgebra of the 
  associated decomposition space $\ds R$.  For a morphism of 
  restriction species $R'\to R$, Schmitt's coalgebra homomorphism is
  the cardinality of the associated \culf functor $\ds R' \to \ds R$.
\end{prop}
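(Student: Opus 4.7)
The plan is to unpack the span defining the incidence comultiplication of $\ds R$ at the objective level, identify it explicitly in terms of the restriction species $R$, and then observe that its homotopy cardinality matches Schmitt's formula termwise.

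First, unfold the groupoids. By construction $\ds R_1 = \R^\iso$ is the groupoid of isoclasses of $R$-structures, which is locally finite by Lemma~\ref{lem:R1}. Similarly $\ds R_2 = \I^\iso_{/\un 2} \times_{\I^\iso_{/\un 1}} \R^\iso$ is the groupoid of pairs $(X, A\to\un 2)$ where $X$ is an $R$-structure on $A$ and $A = A_1 + A_2$ is a $2$-layering of the underlying set. Next, identify the three face maps appearing in the span. The inner face $d_1: \ds R_2 \to \ds R_1$ comes from the generic map $[1]\genmap[2]$ in $\simplexcategory$, which under the duality of Lemma~\ref{Joyal-duality} is the unique map $\un 2 \to \un 1$; postcomposition with this map only forgets the layering, so $d_1(X, A_1{+}A_2) = X$. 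The outer face $d_2$ corresponds under Lemma~\ref{lem:canonical-iso-hahaha} to the convex inclusion $\un 1 \rat \un 2$ picking out the first layer; pulling back along it and applying the restriction functoriality of $R$ yields $d_2(X, A_1{+}A_2) = X|A_1$. Dually $d_0(X, A_1{+}A_2) = X|A_2$.

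The incidence comultiplication is defined at the objective level as the functor $(d_2,d_0)_! \circ d_1^* : \Grpd_{/\ds R_1} \to \Grpd_{/\ds R_1}\otimes\Grpd_{/\ds R_1}$. Evaluating on the basis element $\name X : 1 \to \ds R_1$ associated with an isoclass $X\in R[A]$, the pullback $d_1^*(\name X)$ is the homotopy fibre of $d_1$ over $X$, namely the \emph{set} of $2$-layerings of the underlying set $A$ (a set, because once $A$ is fixed, a layering is a function $A\to\un 2$, without nontrivial automorphisms fixing it as a layering). This set is in canonical bijection with the ordered decompositions $A = A_1 + A_2$. Pushing along $(d_2,d_0)$ then records the pair $(X|A_1,\,X|A_2)$. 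Taking homotopy cardinality (which is available by Proposition~\ref{prop:Rlocallydiscrete}) yields
\[
\Delta([X]) \;=\; \sum_{A_1 + A_2 = A} [X|A_1] \tensor [X|A_2],
\]
which is exactly the Schmitt formula of \ref{Schmitt-comult}. A parallel, shorter argument handles the counit: $\ds R_0 = \I^\iso_{/\un 0}\times_{\I^\iso_{/\un 1}} \R^\iso \simeq R[\varnothing]$, and $s_0^*$ followed by pushforward to a point recovers precisely Schmitt's counit, sending $X$ to $1$ if $A=\varnothing$ and to $0$ otherwise.

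For the naturality statement, a morphism of restriction species $R'\to R$ yields by Theorem~\ref{RisDS} a \culf simplicial map $\ds f : \ds R' \to \ds R$. By the general principle recalled above, \culf functors between locally finite decomposition spaces induce coalgebra homomorphisms upon taking cardinality, with underlying linear map $\ds f_!\colon \Q_{\pi_0 \R'^\iso}\to \Q_{\pi_0 \R^\iso}$. Since at level $1$ the map $\ds f_1$ is just the underlying functor $\R'^\iso\to\R^\iso$ induced by the natural transformation $R'\Rightarrow R$, this cardinality is precisely Schmitt's induced coalgebra homomorphism. The main thing to verify is therefore the single identification of the fibre of $d_1$ with the set of unordered-but-labelled decompositions of $A$; the rest is bookkeeping that is made routine by the compatibility between the topologist's and the algebraist's Delta set up in Section~\ref{sec:RSp}.
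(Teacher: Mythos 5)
Your proposal is correct and follows essentially the same route as the paper's own (sketched) proof: pull back along $d_1$, identify the fibre over $\name{X}$ with the discrete groupoid of $2$-layerings of the underlying set, push forward along $(d_2,d_0)$ to get the pairs of restrictions, and take homotopy cardinality using local finiteness and local discreteness from Proposition~\ref{prop:Rlocallydiscrete}. Your additional explicit treatment of the counit and of the morphism statement is consistent with, and slightly more detailed than, what the paper records.
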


\begin{proof}
  (Sketch).  At the objective level, the comultiplication is
  given by pullback along $d_1: \ds R_2 \to \ds R_1$, followed by composing
  with $(d_2,d_0)$.  For a given $R$-structure $X$, viewed as a morphism
  $\name{X}:1 \to \ds R_1$, the pullback is the $d_1$-fibre over $X$, that
  is the groupoid $(\ds R_2)_X$ of all $R$-structures with a $2$-layering
  such that the union of the two layers is $X$.  This is a groupoid over
  $\ds R_1 \times \ds R_1$ by composing with $(d_2,d_0)$, which amounts to
  returning the restriction of $X$ to each of the two layers.  To recover
  the formula in \ref{Schmitt-comult}, it remains to take homotopy
  cardinality of this groupoid, relative to $\ds R_1\times \ds R_1$.  This
  is meaningful since $\ds R$ is locally finite by
  Proposition~\ref{prop:Rlocallydiscrete}.  There are general formulae for
  this in \cite{GKT:ex}, but in the present case it is straightforward:
  since $\ds R$ is locally discrete by Proposition~\ref{prop:Rlocallydiscrete},
  the groupoid $(\ds R_2)_X$ is discrete, and hence homotopy cardinality
  amounts to counting isomorphism classes, yielding Schmitt's formula in
  \ref{Schmitt-comult}.  The statement about morphisms does not present 
  further difficulties.
\end{proof}

\begin{blanko}{Monoidal restriction species.}\label{monoidalrestr}
  We introduce the notion of monoidal restriction species.  The idea is simply
  that many restriction species are `closed under disjoint union', in a way
  compatible with restrictions.  This compatibility with restrictions ensures
  that the resulting algebra structure is compatible with the coalgebra
  structure to result altogether in a bialgebra.  This bialgebra is always
  graded (by the number of elements in the underlying set), and most often
  connected (this happens when there is only one possible structure on the empty
  set), and hence a Hopf algebra.  Schmitt~\cite{Schmitt:hacs} arrives at Hopf
  algebras through a notion of {\em coherent exponential restriction species}.
  Our notion is a bit more general, and conceptually simpler.
  
  The category $\I$ has a symmetric monoidal structure given by disjoint
  union, as already exploited to make $\ds I$ a monoidal
  decomposition space (Lemma~\ref{lem:BB-monoidal}).  We define a {\em monoidal
  restriction species} to be a right fibration $\R \to \I$ for which the
  total space $\R$ has a monoidal structure $\sqcup$ and the projection to
  $\I$ is strong monoidal.
  
  If $X_1$ is an $R$-structure with underlying set $S_1$, and $X_2$ is an
  $R$-structure with underlying set $S_2$, and if $K_1\subset S_1$ and $K_2
  \subset S_2$ are subsets (or injective maps), then there is a canonical
  isomorphism
  $$
  (X_1\sqcup X_2)\mid (K_1 + K_2) \simeq (X_1\mid K_1) \sqcup 
  (X_2\mid K_2) .
  $$
  This follows from unique comparison between cartesian lifts and the fact that
  the projection is strong monoidal.  This isomorphism expresses the desired
  compatibility between the monoidal structure and restrictions.
  
  A {\em morphism of monoidal restriction species} is a strong monoidal functor
  which is also a morphism of right fibrations.
\end{blanko}

\begin{prop}\label{prop:monoidalrestr}
  The functor of Theorem \ref{RisDS} extends to a functor from the category 
  of monoidal restriction species and their morphisms to that of
  monoidal decomposition spaces and \culf monoidal functors.
\end{prop}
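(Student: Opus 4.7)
The plan is to lift the levelwise disjoint-union monoidal structure on $\ds I$ to $\ds R$ using the fact that the right fibration $\R \to \I$ is strong monoidal, and then verify CULFness by reducing to the corresponding statement for $\ds I$ already established in Lemma~\ref{lem:BB-monoidal}.

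Concretely, at level $k$ we have $\ds R_k = \I^{\iso}_{/\un k} \times_{\I^{\iso}_{/\un 1}} \R^{\iso}$. I would define a monoidal structure $\otimes : \ds R_k \times \ds R_k \to \ds R_k$ on objects by
$$
\bigl((A \to \un k,\, X),\, (A' \to \un k,\, X')\bigr)\;\longmapsto\;\bigl(A+A' \to \un k,\, X \sqcup X'\bigr),
$$
where the underlying set of $X \sqcup X'$ is canonically identified with $A+A'$ via the strong monoidal structure on $\R \to \I$. Compatibility with the generic (inner face and degeneracy) maps is immediate because these only modify the layering $\un k$ and not the underlying set or the $R$-structure, so the action is the same as that already verified for $\ds I$ in Lemma~\ref{lem:BB-monoidal}. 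Compatibility with the outer face maps $d_\bot$ and $d_\top$ is precisely the isomorphism $(X_1 \sqcup X_2)\mid(K_1+K_2) \simeq (X_1\mid K_1) \sqcup (X_2\mid K_2)$ highlighted in \ref{monoidalrestr}, which is free from the standard uniqueness-of-cartesian-lift argument applied to the strong monoidal right fibration. These checks together show that $\otimes$ assembles into a simplicial functor, and the associativity and unit coherences of $\sqcup$ on $\R$ pass to $\ds R$ by the same pullback construction.

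To verify that $\otimes$ is \culf I would argue fibrewise over $\ds I$. Every generic map in $\ds R$ is a cartesian lift of the corresponding generic map in $\ds I$ by the \culf projection of Theorem~\ref{RisDS}. Since the analogous monoidal structure on $\ds I$ is already known to be \culf by Lemma~\ref{lem:BB-monoidal}, the generic-map squares for $\otimes: \ds R \times \ds R \to \ds R$ are obtained by pasting the corresponding pullback squares for $\ds I$ with cartesian squares coming from $\ds R \to \ds I$, and are therefore themselves pullbacks. For morphisms, a strong monoidal morphism $R' \to R$ of monoidal restriction species induces a simplicial map $\ds R' \to \ds R$ that is \culf by Theorem~\ref{RisDS}; strong monoidality is inherited levelwise from the strong monoidality of $\R' \to \R$.

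The main obstacle I anticipate is not in any single step but in the bookkeeping of coherence: since we work with pseudo-functors and pseudo-natural transformations, the coherence isomorphisms of $\sqcup$ on $\R$ must be threaded through the pullback defining $\ds R_k$ consistently across all levels, and the simplicial identities for the monoidal structure need to be stated up to coherent equivalence rather than on the nose. Once this coherence is organised, however, everything that is not already in Theorem~\ref{RisDS} and Lemma~\ref{lem:BB-monoidal} is formal.
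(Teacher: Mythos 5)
Your proposal is correct and follows essentially the same route as the paper: define the monoidal structure levelwise by $\sqcup$ on $\R^\iso_{/\un n}$ (well defined since the projection is strong monoidal), use the restriction-compatibility isomorphism to see it is simplicial, and verify \culf{}ness via the pullback square against the unique generic map $g:\ds R_k\to \ds R_1$. The only (harmless) variation is that you establish that pullback by pasting against the already-known square for $\ds I$ and the \culf projection $\ds R\to\ds I$, where the paper checks it directly.
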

  
\begin{proof}
  If $\R$ is a monoidal restriction species, then the associated decomposition
  space $\ds R$ is monoidal: in degree $n$, this is simply given by the monoidal
  structure $\sqcup :\R_{/\un n} \times \R_{/\un n} \to \R_{/\un n}$.  This is
  well defined because the projection functor is strong monoidal.  Furthermore,
  this monoidal structure is \culf thanks to the above compatibility: to give a
  pair of $R$-structures with a layering of each is the same as giving a pair of
  $R$-structures with a layering of its disjoint union. 
    This is to say that this square is a pullback:
  $$\xymatrix{
     \R^\iso_{/\un 1} \times \R^\iso_{/\un 1} \ar[d] & \ar[l]_{g\times g} 
	 \R^\iso_{/\un k}\times \R^\iso_{/\un k} \ar[d] \\
     \R^\iso_{/\un 1} & \ar[l]^g \R^\iso_{/\un k} ,
  }$$
  where $g$ is the unique generic map (and $k$ could be $0$).
\end{proof}
It follows that every monoidal restriction species defines a 
bialgebra (a Hopf algebra in the connected case), and a morphism of monoidal 
restriction species defines a bialgebra homomorphism.

\begin{blanko}{Remark.}
  There is a kind of converse to the construction $R \leadsto \ds R$.
  Namely, starting from a
  decomposition space $\ds R$ \culf over $\ds I$ (and with $\ds R_1$ locally
  finite), we can take lower dec of
  both and obtain a Segal space which by Lemma~\ref{lem:DecULF-is-right} is
  a right fibration over $\Dec_\bot \ds I = \fatnerve\I$ 
  (Proposition~\ref{prop:dec-bb-is-ii}).  In fact $\Dec_\bot \ds R$ is a
  Rezk-complete Segal space.  Indeed, since $\ds R$ is \culf over $\ds I$,
  it is complete, locally finite, locally discrete and of locally
  finite length, by Lemma~\ref{lem:culf-local}.  But also the dec map
  $\Dec_\bot \ds R \to \ds R$ is \culf, so $\Dec_\bot \ds R$ also has all
  these properties.  Since it is furthermore a Segal space, it follows from
  a general result of \cite{GKT:MI} that it is Rezk complete.
  Hence $\Dec_\bot \ds R$ is essentially the fat nerve of a category $\R$
  (with a right fibration over $\I$).
  
%STRICT%   One can circumvent the subtleties by subjecting
%STRICT%   the input data $\ds R \to \ds I$ to suitable strictness conditions:
%STRICT%   require $\ds R$ to be \culf over $\ds I$ by {\em strict} pullbacks,
%STRICT%   whereby $\ds R$ itself becomes a strict decomposition space in the sense
%STRICT%   that the generic-free pullback squares are strict pullbacks. It follows
%STRICT%   that $\Dec_\bot \ds R$ is a strict category object, and the right fibration 
%STRICT%   $\Dec_\bot \ds R \to \fatnerve \I$ is split.  Altogether it is itself
%STRICT%   the fat nerve a split cartesian fibration $\R\to \I$, on the nose.
\end{blanko}

%%%%%%%%%%%%%%%%%%%%%%%%%%%%%%%%%%%%%%%%%%%%%%%%%%
\section{The decomposition space $\ds C$ of layered finite posets}
%%%%%%%%%%%%%%%%%%%%%%%%%%%%%%%%%%%%%%%%%%%%%%%%%%

\label{sec:C}

We define and study the monoidal decomposition space $\ds C$ of finite posets
and their `admissible cuts', which will play the same role for
directed restriction species as $\ds I$ does for plain restriction species.
An important difference is that while the simplicial groupoid $\ds I$ is a Segal
space, $\ds C$ is only a decomposition space, not a Segal space.

\begin{blanko}{Convex maps of posets.}
  A subposet $K$ of a poset $P$ is {\em convex} if it is full and if $a\leq
  x\leq b$ in $P$ and $a,b\in K$ imply $x \in K$.  A map of posets $f:K \to
  P$ is {\em convex} if for all $a,b\in K$ and $fa\leq x \leq fb$ in $P$
  there is a unique $k\in K$ with $a\leq k \leq b$ and $fk=x$.  In other
  words, $f$ is injective and $f(K) \subset P$ is a convex subposet.  We
  denote by $\C$ the category of finite posets and convex maps.
\end{blanko}

\begin{lemma}\label{pbkconv}
  In the category of posets, convex maps are stable under pullback.
\end{lemma}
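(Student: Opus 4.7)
The plan is to work with the explicit set-theoretic description of the pullback in the category of posets and then verify the convex-lifting property directly by transferring it across the pullback square. Concretely, given a cospan $K \stackrel{f}\to P \stackrel{g}\leftarrow Q$ with $f$ convex, form $K \times_P Q = \{(k,q) \in K \times Q : f(k) = g(q)\}$ ordered componentwise; this is well-known to be the pullback in posets, since limits in posets are computed as in sets with the induced order. Call the two projections $\pi_K$ and $\pi_Q$; I want to show $\pi_Q$ is convex.

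First I would check that $\pi_Q$ is injective: if $(k_1,q),(k_2,q)$ both sit in the pullback then $f(k_1) = g(q) = f(k_2)$, and injectivity of $f$ (which follows from convexity) forces $k_1=k_2$. Next, for the convex-lifting property, suppose $(k_1,q_1),(k_2,q_2)\in K\times_P Q$ and $q_1 \leq y \leq q_2$ in $Q$. Applying $g$ gives $f(k_1) = g(q_1) \leq g(y) \leq g(q_2) = f(k_2)$ in $P$, so by convexity of $f$ there is a unique $k \in K$ with $k_1 \leq k \leq k_2$ and $f(k) = g(y)$. The pair $(k,y)$ then lies in $K \times_P Q$, satisfies $(k_1,q_1) \leq (k,y) \leq (k_2,q_2)$, and projects to $y$. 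Uniqueness of such a lift along $\pi_Q$ is immediate from uniqueness of $k$ in the convexity condition for $f$, together with injectivity of $\pi_Q$.

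There is no real obstacle here; the whole point is that convexity is defined precisely as a unique-lifting property on intervals, and unique-lifting properties are manifestly stable under pullback in any category where the pullback is computed on underlying sets with induced structure. The only care needed is to match the two ways of phrasing convexity (``$f(K) \subset P$ is a convex subposet and $f$ is injective'' versus ``unique lifting of intermediate elements'') and to use the latter formulation, since it is exactly what transports through the square. I would therefore present the argument in the unique-lifting form above, which makes the proof a one-line diagram chase after the setup.
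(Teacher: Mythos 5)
Your argument is correct and complete. Note that the paper states Lemma~\ref{pbkconv} without any proof, regarding it as routine, so there is nothing to compare against; your verification is exactly the expected one. The key choices you make are the right ones: pullbacks of posets are indeed computed on underlying sets with the componentwise order, and the paper's primary definition of convexity is precisely the unique-lifting formulation (``for all $a,b\in K$ and $fa\leq x\leq fb$ there is a unique $k$ with $a\leq k\leq b$ and $fk=x$''), which, as you observe, transports verbatim across the square: applying $g$ to $q_1\leq y\leq q_2$ produces the interval condition in $P$, the unique lift $k$ in $K$ gives the element $(k,y)$ of the fibre product, and uniqueness downstairs forces uniqueness upstairs. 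Your remark that injectivity of $f$ follows from the lifting property (and hence injectivity of $\pi_Q$) is also correct and worth keeping, since it is what makes the ``uniqueness'' clause of the lifted property immediate.
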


\begin{lemma}\label{lem:convex-characterisation}
  For a subposet $K \subset P$ the following are equivalent.
  \begin{enumerate}
    \item $K$ is convex
    
    \item $K$ is the middle fibre of some monotone map $P \to \un 3$
	
	\item $K \subset P$ is a fully faithful
	ULF functor of categories.
  
  \end{enumerate}  
\end{lemma}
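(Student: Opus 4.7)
The plan is to run the equivalences as $(2)\Rightarrow(1)\Rightarrow(2)$ together with a direct translation $(1)\Leftrightarrow(3)$. All three steps are routine verifications, so I expect no real obstacle; the construction in $(1)\Rightarrow(2)$ requires the most care.

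First, $(2)\Rightarrow(1)$ is essentially automatic. If $K=\phi^{-1}(2)$ for a monotone $\phi:P\to\un 3$, and $a\leq x\leq b$ with $a,b\in K$, then monotonicity forces $2=\phi(a)\leq\phi(x)\leq\phi(b)=2$, so $x\in K$. Fullness of $K$ as a subposet is automatic for a fibre of a monotone map.

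For $(1)\Rightarrow(2)$, I would construct $\phi:P\to\un 3$ explicitly by declaring $\phi(x)=2$ if $x\in K$, $\phi(x)=1$ if $x\notin K$ but $x\leq k$ for some $k\in K$, and $\phi(x)=3$ otherwise. By construction $K=\phi^{-1}(2)$, so the content is monotonicity of $\phi$. The cases $\phi(y)=3$ and $\phi(y)=2$ are immediate. The only subtle case is $x\leq y$ with $\phi(y)=1$: pick $k\in K$ with $y\leq k$; if $x$ were in $K$, then from $x\leq y\leq k$ with $x,k\in K$ convexity would force $y\in K$, contradicting $\phi(y)=1$, so $x\notin K$ and $x\leq k$ shows $\phi(x)=1$ as required.

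For $(1)\Leftrightarrow(3)$, I would view $K\subset P$ as a functor of poset-categories and match definitions. Since each hom-set is a subsingleton, fully faithfulness of the inclusion is the same as $K$ being a full subposet. The ULF condition, applied to an arrow $a\leq b$ in $K$ and a factorisation $a\leq x\leq b$ of its image in $P$, demands a unique $k\in K$ with $k=x$ and $a\leq k\leq b$; uniqueness is free from injectivity on objects, and existence is exactly the middle-element condition defining convexity. Quantifying over all $a\leq b$ in $K$ gives the equivalence, bearing in mind that once $K$ is full in $P$ the hypotheses $a,b\in K$ with $a\leq_P b$ and $a\leq_K b$ coincide.

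The most delicate point is the placement of elements of $P$ incomparable to $K$ in the $(1)\Rightarrow(2)$ construction; sending them to $3$ works, but one has to check monotonicity against all three possible values of $\phi(y)$. Everything else is bookkeeping.
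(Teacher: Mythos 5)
Your proof is correct. The paper states this lemma without any proof, and your argument---the explicit three-valued monotone map (lower part $\mapsto 1$, $K\mapsto 2$, the rest $\mapsto 3$) for $(1)\Rightarrow(2)$, together with the dictionary identifying fullness with fully faithfulness and the middle-element clause of convexity with the ULF lifting condition for $(1)\Leftrightarrow(3)$---is precisely the routine verification the authors leave to the reader.
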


\begin{blanko}{Layered posets.}\label{polayering}
  An {\em $n$-layering} of a finite poset $P$ is a monotone map $\ell: P\to\un n$.
  We refer to the fibres $P_i = \ell^{-1}(i)$, $i\in\un n$, as {\em layers}.
  Layers are convex subposets, by the previous lemma, and may be empty.
  
  For sets, considered as discrete posets, the notion of set layering from
  \ref{layering} agrees with the notion of poset layering.  Poset layering
  is more subtle, however, as it contains more information than just the
  list of layers.
\end{blanko}

\begin{blanko}{The groupoid of $n$-layered finite posets.}\label{C/k}\label{C/n}
  Consider the groupoid $\C^\iso_{/\un n}$ of $n$-layerings of finite posets.
  That is, the objects of $\C^\iso_{/\un n}$ are monotone maps $\ell:P \to\un n$,
  and the morphisms are commutative triangles
  $$
  \xymatrix@C=2ex{P\drto\rrto^\simeq&&P'\dlto\\&\un n,}
  $$
  where $P \isopil P'$ is a monotone bijection (a poset isomorphism).
\end{blanko}

\begin{blanko}{The simplicial groupoid of layered finite posets.}
  We can define face and degeneracy maps between the groupoids of layered finite
  posets to assemble them into a simplicial groupoid $\ds C$,
  in the same way as for layered finite sets in \ref{simp-layering}:
  
  The degeneracy and the inner face maps are defined using the 
  correspondence $\Deltagen\op \simeq \unDelta$: if $g:[n]\to[m]$ 
  is a generic map in $\simplexcategory$ then 
  $g^*:\C^\iso_{/\un m}\to\C^\iso_{/\un n}$ 
  is given by postcomposition with the
  corresponding map $\un g:\un m\to\un n$ in $\un\simplexcategory$,
  $$
  P {\to} \un m \qquad \mapsto  \qquad P {\to} \un m {\to} \un n.
  $$
  The definition for free maps (composites of outer face maps) is by pullback:
  for example, $d_\top : \C^\iso_{/\un n} \to \C^\iso_{/\un{n{-}1}}$ is given
  by taking $P' {\to} \un n$ to  $P {\to} \un{n{-}1}$ in the
  pullback square
  $$\xymatrix{
	 P \drpullback\ar[r]\ar[d] & P' \ar[d] \\
	 \un {n{-}1} \ar[r]_-{\un d_\top} & \un n .
  }$$
  Since $\un d_\top : \un{n{-}1} \to \un n$ is a convex map of
  posets, so is $P\to P'$. To be explicit, we can take this
  convex map to be an actual subset inclusion.
\end{blanko}

\begin{prop}
  The groupoids $\C^\iso_{/\un n}$ and the maps between them, defined
  above, form a 
%STRICT%   strict 
  simplicial groupoid $\ds C$.
\end{prop}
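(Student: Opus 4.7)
The plan is to mirror the proof of Proposition~\ref{B/k}, replacing layered sets by layered posets and checking at each step that convex maps of posets interact well with pullbacks. The simplicial structure on $\ds C$ is built from two operations: postcomposition along maps in $\unDelta$ (for generic maps, via the duality $\Deltagen\op \simeq \unDelta$ of Lemma~\ref{Joyal-duality}), and pullback along convex maps in $\unDelta$ (for free maps, via Lemma~\ref{lem:canonical-iso-hahaha}). Both operations are well defined on $\C^\iso_{/\un n}$: postcomposition is immediate, and for a layering $P \to \un n$ pulled back along a convex map $\un k \rat \un n$, the resulting map $P \times_{\un n} \un k \to P$ is convex because convex maps of posets are stable under pullback by Lemma~\ref{pbkconv}.

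First I would handle the three trivial groups of identities: the generic--generic ones are automatic, since they come from commutative diagrams in $\unDelta$; the free--free and outer--outer identities reduce, via the universal property of pullback, to commutativity of the corresponding squares in $\unDelta$. The real work lies in the nine mixed identities relating an outer face map to a generic map. As in the proof of Proposition~\ref{B/k}, each of these translates, under the dictionary of Lemma~\ref{Joyal-duality}, into a Beck--Chevalley assertion for a specific square in $\unDelta$, asking that pullback followed by postcomposition agrees with postcomposition followed by pullback.

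The key input is then Lemmas~\ref{lem:somepbk1}--\ref{lem:po=pbk}, which confirm that the relevant squares in $\unDelta$ are pullbacks. Base-changing a layering $P \to \un n$ across such a square produces pullback squares of layered posets---and this is exactly where Lemma~\ref{pbkconv} is used crucially, to ensure the pulled-back maps of posets remain convex---and Beck--Chevalley then delivers the required equalities of functors on the slice groupoids $\C^\iso_{/\un n}$. To obtain the simplicial identities on the nose rather than merely up to coherent equivalence, I would make the same strictifying convention as in~\ref{simp-layering}: pick every pullback along $\un d^\bot$ or $\un d^\top$ to be the concrete subposet inclusion carved out by the preimage of the relevant layers, with the induced partial order.

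The main obstacle is not any individual step but the bookkeeping of the interplay between the two functorialities (covariant in all of $\unDelta$ via postcomposition, contravariant along convex maps via pullback). This is precisely what motivates the general notion of sesquicartesian fibration introduced in \S\ref{sec:sesq}; once that framework is available, the present proposition will fall out as a routine instance of a general construction. Here, however, I am carrying out the verification by hand, essentially as a poset-flavoured repetition of the argument already run for $\ds I$.
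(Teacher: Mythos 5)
Your proposal is correct and follows exactly the route the paper takes: the paper's own proof simply defers to the argument for $\ds I$ in Proposition~\ref{B/k} (covariant functoriality in $\unDelta$, contravariant functoriality in convex maps, and their compatibility via Beck--Chevalley), with the forthcoming sesquicartesian-fibration machinery cited as the eventual formalisation. You fill in the same details more explicitly, correctly identifying Lemma~\ref{pbkconv} as the one genuinely new ingredient needed in the poset setting, so there is nothing to add.
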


\begin{proof}
  The check may be performed in precisely the same way as done for
  $\ds I$ in Proposition~\ref{B/k}: one checks the constructions above
  are covariantly functorial in all maps in $\unDelta$ (giving the generic
  part), contravariantly functorial in the convex maps of $\unDelta$
  (giving the free part), and that these two functorialities are compatible.
  We will formalise this later in the notions of $\newnabla$-spaces
  and sesquicartesian fibrations
  (Sections~\ref{sec:nabla}--\ref{sec:sesq}).
\end{proof}

\begin{blanko}{Lower-set inclusions.}
  Let $P$ be a poset.  A full subposet $L\subset P$ is a {\em lower set}
  (also called an {\em ideal}) if $x\leq b$ in $P$ and $b\in L$ imply $x
  \in L$.  A map of posets $L\to P$ is a {\em lower-set inclusion} if it is
  injective, full, and its image is a lower set in $P$.  Clearly lower-set
  inclusions are convex.  Let $\C^\low$ denote the category of finite
  posets and lower-set inclusions.  Note that $L \to P$ is a lower-set
  inclusion if and only if it is a right fibration of categories.  Upper
  sets are defined analogously.
\end{blanko}

\begin{lemma}\label{lem:lsi-pbk}
  In the category of posets, lower-set inclusions are stable under pullback.
\end{lemma}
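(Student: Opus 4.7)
The plan is to compute the pullback in the category of posets explicitly and then check the defining properties of a lower-set inclusion one by one. Given a cospan $L \stackrel{f}{\to} P \stackrel{g}{\leftarrow} Q$ with $f$ a lower-set inclusion, the pullback $L \times_P Q$ has as underlying set $\{(l,q) : f(l) = g(q)\}$ with the componentwise order, and it projects to $Q$ by $(l,q)\mapsto q$. Since $f$ is injective, the element $l$ is uniquely determined by $q$, so the projection identifies $L\times_P Q$ with the subset $Q' := \{q\in Q : g(q) \in f(L)\} \subset Q$.

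First I would verify that $Q'\to Q$ is injective and full. Injectivity is immediate from the description as a subset. For fullness, I would observe that if $q_1, q_2 \in Q'$ with $q_1 \leq q_2$ in $Q'$, the order in $Q'$ is by construction inherited from $L \times_P Q$, which has componentwise order; monotonicity of $g$ together with fullness of $L\subset P$ then forces the order on $Q'$ to coincide with the restriction of the order on $Q$. Conversely, if $q_1 \leq q_2$ in $Q$ with both in $Q'$, then $g(q_1) \leq g(q_2)$ in $P$, and since $f(L) \subset P$ is full, the corresponding elements of $L$ satisfy the same inequality, so $(l_1,q_1) \leq (l_2,q_2)$ in the pullback.

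Next I would check that $Q'$ is a lower set in $Q$: suppose $q' \leq q$ in $Q$ with $q\in Q'$, i.e.\ $g(q) \in f(L)$. Then $g(q') \leq g(q)$ by monotonicity of $g$, and since $f(L)$ is a lower set in $P$, we conclude $g(q') \in f(L)$, hence $q' \in Q'$. This completes the verification, and moreover shows that $L\times_P Q \to Q$ is itself a lower-set inclusion.

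Really there is no main obstacle here; the argument is a direct unwinding of the definitions. The only point worth flagging is that fullness of the pullback projection relies on the fullness clause in the definition of lower-set inclusion (which is built into Lemma~\ref{pbkconv} for general convex maps), and that the lower-set property transports along the monotone map $g$ by monotonicity alone, without any hypothesis on $g$.
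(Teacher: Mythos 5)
Your proof is correct; the paper states this lemma without proof, treating it as a routine verification, and your direct computation of the pullback as the full subposet $Q'=\{q\in Q: g(q)\in f(L)\}$ together with the check that fullness of $f$ gives fullness of $Q'\to Q$ and that the lower-set property of $f(L)$ transports along $g$ by monotonicity is exactly the intended argument. No gaps.
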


\begin{prop}
  The map $\un d_\top:\un 1\to\un 2$ classifies lower-set inclusions. 
  That is, if $P$ is a poset, pullback along $\un d_\top$ defines a bijection
  $$
  \{\mbox{monotone maps }P\to\un2\} \;\;\cong \;\;\{\mbox{isoclasses of lower-set inclusions }L\subseteq P\}.
  $$
\end{prop}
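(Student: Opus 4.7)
The plan is to exhibit mutually inverse constructions. In one direction, given a monotone map $f\colon P\to\un 2$, form the pullback along $\un d_\top\colon \un 1\to\un 2$; concretely this is the subposet $f^{-1}(1)\subseteq P$. The map $\un d_\top$ is itself a lower-set inclusion (the singleton $\{1\}$ is the unique proper lower set of $\un 2=\{1<2\}$), so Lemma~\ref{lem:lsi-pbk} guarantees that the pullback $f^{-1}(1)\subseteq P$ is again a lower-set inclusion.

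In the other direction, given a lower-set inclusion $L\subseteq P$, send it to its characteristic function $\chi_L\colon P\to\un 2$, defined by $\chi_L(x)=1$ for $x\in L$ and $\chi_L(x)=2$ otherwise. The defining condition of a lower set is exactly what is needed to make $\chi_L$ monotone: if $x\leq y$ and $y\in L$ then $x\in L$, so we never have $\chi_L(x)=2$ together with $\chi_L(y)=1$.

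That these two constructions are mutually inverse is immediate from the definitions: $\chi_{f^{-1}(1)}=f$ and $(\chi_L)^{-1}(1)=L$ on the nose. The only remaining bookkeeping is to identify both sides of the claimed bijection with the set of lower subsets of $P$: on the right, a lower-set inclusion, being full, is determined up to unique isomorphism over $P$ by its image, so ``isoclasses of lower-set inclusions'' are just lower subsets of $P$; on the left, $\un 2$ has no nontrivial automorphisms, so distinct monotone maps $P\to\un 2$ represent distinct isoclasses.

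I do not anticipate any serious obstacle; the content is essentially a reformulation of the definition of a lower set. The only point requiring care is aligning the convention that $\un d_\top$ picks out the bottom element of $\un 2$, so that the preimage under a classifying map is a lower set rather than an upper set; this is ensured by the recipe in \ref{deltadelta} that $\un d^\top=\un d^{n}$ skips the top element $n+1$ of $\un{n+1}$.
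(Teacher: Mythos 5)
Your proof is correct. The paper in fact states this proposition without proof, and your argument --- identifying the pullback of $f\colon P\to\un 2$ along $\un d^\top$ with the preimage of the bottom element $1\in\un 2$, checking that monotonicity of $f$ is equivalent to $f^{-1}(1)$ being a lower set via the characteristic-function construction, and noting that a full injection is determined up to unique isomorphism over $P$ by its image --- is exactly the evident argument the authors leave to the reader, including the correct resolution of the convention that $\un d^\top\colon\un1\to\un2$ hits the bottom element (it skips $2$), so that preimages are lower sets rather than upper sets.
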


\begin{prop}\label{prop:Dec(C)}
  There are natural (levelwise) equivalences
  $$
  \Dec_\bot(\ds C) \simeq \fatnerve \C^\low
  \qquad
  \Dec_\top \ds C \simeq \fatnerve (\C^\upper)\op
  $$
\end{prop}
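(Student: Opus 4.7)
The plan is to establish a levelwise equivalence $\ds C_{n+1} \simeq (\fatnerve \C^\low)_n$ for each $n \ge 0$ and verify compatibility with the face and degeneracy maps.

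At the object level, a monotone map $\ell: P \to \un{n+1}$ is equivalent to the chain of cumulative preimages
$$K_0 \subseteq K_1 \subseteq \cdots \subseteq K_n = P, \qquad K_i := \ell^{-1}(\{1,\ldots,i+1\}).$$
Each $K_i$ is a lower set of $P$, hence a lower set of every $K_j$ with $j \ge i$, so the chain lives in $\C^\low$; the inverse correspondence sends a chain to $\ell(x) = \min\{i+1 : x \in K_i\}$. This iterates the preceding proposition (the case $n = 1$). A morphism in $\ds C_{n+1}$ is a layering-preserving poset isomorphism $(P,\ell) \isopil (P',\ell')$; it restricts on each $K_i$ to an isomorphism compatible with the chain inclusions, and conversely any such compatible data assembles into a layering-preserving isomorphism of the top posets. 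Hence the bijection extends to an equivalence of groupoids $\ds C_{n+1} \simeq (\fatnerve\C^\low)_n$.

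Next I would check compatibility with the simplicial structure, using the duality $\Deltagen\op \cong \unDelta$ of Lemma~\ref{Joyal-duality}. For $0 \le i < n$, the inner face $d_i^{\Dec_\bot(\ds C)} = d_{i+1}^{\ds C}$ is postcomposition with $\un s^i: \un{n+1} \to \un n$, which merges the old layers $i+1$ and $i+2$; a direct computation shows the new cumulative preimages are the old chain with $K_i$ deleted, matching the face $d_i$ of $\fatnerve\C^\low$. The outer top face $d_n^{\Dec_\bot(\ds C)} = d_{n+1}^{\ds C}$ is pullback along $\un d^\top$, restricting $P$ to $K_{n-1}$ and so removing $K_n$ from the chain. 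The degeneracy $s_i^{\Dec_\bot(\ds C)} = s_{i+1}^{\ds C}$ postcomposes with $\un d^{i+1}$, inserting an empty layer at position $i+2$, which repeats $K_i$ in the chain. These are precisely the face and degeneracy maps of $\fatnerve\C^\low$.

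The case of $\Dec_\top(\ds C)$ is entirely dual: an $(n+1)$-layering is equivalently the decreasing chain of upper-set preimages $U_0 \supseteq U_1 \supseteq \cdots \supseteq U_n$ with $U_i := \ell^{-1}(\{i+1,\ldots,n+1\})$, which is an $n$-simplex of $\fatnerve(\C^\upper)\op$; the opposite arises exactly because the chain of upper sets decreases rather than increases. The face and degeneracy maps of $\Dec_\top(\ds C)$, which are the bottom $d_i^{\ds C}$ and $s_i^{\ds C}$ of $\ds C$, correspond to those of $\fatnerve(\C^\upper)\op$ by an analogous computation. The main obstacle throughout is the bookkeeping induced by the index shift from the decalage, the duality $\Deltagen\op \cong \unDelta$, and the orientation conventions for lower versus upper sets; once the object-level correspondence is set up carefully, each simplicial identity reduces to a direct translation via Lemma~\ref{Joyal-duality}.
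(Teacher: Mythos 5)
Your proof is correct and follows essentially the same route as the paper's: both identify an $(n{+}1)$-layering $P\to\un{n{+}1}$ with its chain of cumulative preimages (a string of lower-set inclusions), invert via the ``least index'' formula, match automorphisms, and observe that the simplicial operators correspond. The only difference is that you write out explicitly the face/degeneracy bookkeeping that the paper dismisses as ``straightforward to check''; your computations there are accurate.
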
 
\begin{proof}
  There is a natural equivalence  
  $$
  \C^\iso_{/\un n} 
 \;\; \simeq\;\; 
  \Map([n{-}1],\C^\low)
  $$
  Given an $n$-layering of a poset $P$ (i.e.~a monotone map $P\to\un n$),
  let $P_{\un n}=P$ and define inductively $P_{\un k}\to \un k$ as the
  pullback of $P_{\un {k{+}1}}\to \un {k{+}1}$ along the lower-set inclusions
  $\un d_\top:\un{ k} \to \un {k{+}1}$.  By Lemma \ref{lem:lsi-pbk}, we
  obtain lower-set inclusions $P_{\un k}\to P_{\un {k{+}1}}$.  Then the
  equivalence assigns to $P {\to} \un n$ the sequence of lower-set inclusions
  $$
  (P_{\un1} \into P_{\un2} \into \cdots \into P_{\un{n{-}1}} \into P)
  \;\;\in\;\; \Map([n{-}1],\C^\low).
  $$
  This assignment is fully faithful since each automorphism of such
  sequences corresponds to a unique automorphism of $P$ over $\un n$.
  Finally, given such a sequence of lower-set inclusions, we recover a
  monotone map $P\to \un n$, sending $x$ to the least $k$ for which $x\in
  P_{\un k}$.
  It is straightforward to check that the face maps match up as required, so as
  to assemble these equivalences into a levelwise equivalence of simplicial 
  groupoids.

  The result for the upper dec is analogous.  The `op' appears in that case
  because the smallest subset in the chain is the last one, not the first
  as above.
\end{proof}

\begin{prop}
  $\ds C$ is a decomposition space (but not a Segal space).
\end{prop}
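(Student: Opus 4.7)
The plan is to apply Theorem~\ref{thm:decomp-dec-segal}, which characterises decomposition spaces among simplicial groupoids by requiring the two decs to be Segal spaces and the dec maps to be \culf. First I would observe that by Proposition~\ref{prop:Dec(C)}, both $\Dec_\bot(\ds C)$ and $\Dec_\top(\ds C)$ are levelwise equivalent to fat nerves of ordinary categories (namely $\C^\low$ and $(\C^\upper)\op$), and the fat nerve of any category is automatically a Segal space. This disposes of the first half of the hypothesis.

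The main task would then be to verify that the dec maps $d_\bot : \Dec_\bot(\ds C) \to \ds C$ and $d_\top : \Dec_\top(\ds C) \to \ds C$ are \culf, i.e.~cartesian on all generic maps. Using the duality $\Deltagen\op \simeq \unDelta$ (Lemma~\ref{Joyal-duality}), generic face and degeneracy maps in $\ds C$ act by postcomposition along morphisms in $\unDelta$, whereas each dec map corresponds at every level to pullback along a convex map ($\un d^\bot$ or $\un d^\top$) in $\unDelta$. The \culf condition then reduces to a Beck--Chevalley identity between postcomposition and pullback along convex maps; this identity holds because convex maps of posets are stable under pullback (Lemma~\ref{pbkconv}) and because the relevant squares in $\unDelta$ are pullbacks by Lemma~\ref{lem:po=pbk}. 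The argument closely mirrors the verification carried out for $\ds I$ in Proposition~\ref{B/k}, the only difference being that the fibre of a pullback is now a convex subposet rather than a subset. I expect the main obstacle to be organising the bookkeeping so that every generic/convex pair of maps is treated uniformly; once the squares are set up correctly, Beck--Chevalley does all the work.

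Finally, to see that $\ds C$ is not a Segal space, I would exhibit a concrete obstruction at level $2$. Since $\ds C_0$ is contractible (it contains only the empty poset), the Segal map would have to be an equivalence $\ds C_2 \simeq \ds C_1 \times \ds C_1$. But the two-element chain $\un 2$ with the identity layering and the two-element antichain $\{a,b\}$ with the layering $a\mapsto 1$, $b\mapsto 2$ have the same image in $\ds C_1 \times \ds C_1$, namely a pair of singleton posets, while being non-isomorphic in $\ds C_2$: the former carries the relation $a<b$ between elements of distinct layers and the latter does not. Hence the Segal map is not even essentially surjective, and $\ds C$ fails the Segal condition.
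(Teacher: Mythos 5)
Your overall strategy (the decalage criterion, plus the observation that both Decs are fat nerves and hence Segal) is the same as the paper's, but the key step is not justified correctly. To show the dec maps are \culf you must show that, for every generic $g:[n]\genmap[m]$, the naturality square of groupoids
$$\xymatrix{
\C^\iso_{/\un 1+\un m} \ar[r]^-{(\un d^\bot)\upperstar}\ar[d] & \C^\iso_{/\un m}\ar[d]\\
\C^\iso_{/\un 1+\un n} \ar[r]_-{(\un d^\bot)\upperstar} & \C^\iso_{/\un n}
}$$
is a homotopy \emph{pullback}. The Beck--Chevalley identity coming from the pullback squares of Lemma~\ref{lem:po=pbk} only gives you that these squares \emph{commute} (which is what makes $d_\bot$ a simplicial map at all, as in Proposition~\ref{B/k}); it does not make them pullbacks. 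The paper is explicit about this distinction later (\ref{iesq-on-functors}): the squares you need are exactly the identity-extension squares with $\un a=\un 1$, $\un b=\un 0$, and the iesq (pullback) condition is an extra property beyond BC that has genuine content --- for $\ds C$ it amounts to the fact that a $(1+m)$-layering of a poset is the same thing as a $(1+n)$-layering together with a compatible refinement of its last $n$ layers, which is the substance of the proof of Proposition~\ref{CDelta}. So "Beck--Chevalley does all the work" is precisely where your argument has a hole. The paper avoids having to verify full \culf{}ness of the dec maps by invoking a sharper form of the decalage criterion (Theorem~4.11~(4) of \cite{GKT:DSIAMI-1}): once both Decs are known to be Segal, it suffices to check that the two squares relating $s_0,s_1$ to $d_\bot,d_\top$ in low degrees are pullbacks, and these are easily seen to be strict pullbacks along iso-fibrations (a $2$-layered poset with one layer empty is determined by the other layer).

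Your non-Segal example is essentially right but the conclusion is misstated: exhibiting the $2$-chain and the $2$-antichain as non-isomorphic objects of $\ds C_2$ with isomorphic images in $\ds C_1\times\ds C_1$ shows the Segal map is not essentially \emph{injective}, not that it fails to be essentially surjective (it is essentially surjective, via disjoint unions). The example still correctly shows $\ds C_2\not\simeq\ds C_1\times\ds C_1$, which is all the paper asserts.
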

\begin{proof}
  We apply the decalage criterion~\cite[Theorem~4.11~(4)]{GKT:DSIAMI-1}. 
  We already proved 
  that the two Decs are Segal spaces.  It remains to check that the following
  two squares are pullbacks:
  $$\xymatrix{
     \C^\iso_{/\un 0} \ar[r]^{s_0} & \C^\iso_{/\un 1} \\
     \C^\iso_{/\un 1} \ar[u]^{d_\bot} \urpullback \ar[r]_{s_1} & \C^\iso_{/\un 2} \ar[u]_{d_\bot}
     }
     \qquad
     \xymatrix{
     \C^\iso_{/\un 0} \ar[r]^{s_0} & \C^\iso_{/\un 1} \\
     \C^\iso_{/\un 1} \ar[u]^{d_\top} \urpullback \ar[r]_{s_0} & \C^\iso_{/\un 2} 
     \ar[u]_{d_\top}
  }
  $$
  But it is clear they are strict pullbacks: this amounts to
  saying that if a $2$-layered poset has one layer empty, it is
  determined by the other layer.  Since the free face maps are
  iso-fibrations, the squares are also (homotopy) pullbacks.  Clearly $\ds C$
  is not a Segal space as $\C^\iso_{/\un{m{+}n}} \not\simeq
  \C^\iso_{/\un{m}} \times \C^\iso_{/\un{n}}$.
\end{proof}

\begin{lemma}\label{lem:Cfinite}
  The decomposition space $\ds C$ is complete, locally finite and locally discrete,
  and of locally finite length.
\end{lemma}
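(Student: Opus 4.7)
The plan is to verify each of the four properties directly, by identifying the relevant groupoids and maps, exactly as envisaged in the analogous Lemma~\ref{lem:BB-local} for $\ds I$. The four properties (from \ref{finite}) are measured on the maps
\[
\ds C_0 \stackrel{s_0}{\longrightarrow} \ds C_1 \stackrel{d_1}{\longleftarrow} \ds C_2 ,
\]
so I first describe these explicitly: $\ds C_0 \simeq 1$ picks out the empty poset, $\ds C_1 = \C^{\iso}$ is the groupoid of finite posets and iso, and $\ds C_2 = \C^{\iso}_{/\un 2}$ is the groupoid of finite posets equipped with a monotone map to $\un 2$ (equivalently, with an admissible cut). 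The map $d_1$ forgets the $2$-layering.

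\emph{Completeness.} The map $s_0 : \ds C_0 \to \ds C_1$ is the inclusion of the empty poset into $\C^{\iso}$; this is $(-1)$-truncated, hence mono.

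\emph{Local finiteness.} The groupoid $\C^{\iso}$ is locally finite because every finite poset has a finite automorphism group. For the generic map $d_1: \ds C_2 \to \ds C_1$, the homotopy fibre over a finite poset $P$ is the discrete set of monotone maps $P \to \un 2$, which is finite (bounded by $2^{|P|}$).

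\emph{Local discreteness.} The fibre of $s_0$ over $P \in \ds C_1$ is either a point (when $P = \varnothing$) or empty, hence discrete. The fibre of $d_1$ over $P$ was already identified above as the discrete set of monotone maps $P \to \un 2$. So both fibres are discrete.

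\emph{Locally finite length.} I will show that for any finite poset $P$, every $n$-layering of $P$ with $n > |P|$ is degenerate. From the correspondence between generic maps in $\simplexcategory$ and maps in $\unDelta$ (Lemma~\ref{Joyal-duality}), the degeneracy $s_k: \ds C_n \to \ds C_{n+1}$ is postcomposition with the coface $\un d^k: \un n \to \un{n+1}$, which skips position $k{+}1$. Hence a layering $P \to \un n$ is degenerate precisely when some layer is empty, and a non-degenerate $n$-layering requires all $n$ layers non-empty, forcing $n \leq |P|$. This is the required bound.

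The main (mild) obstacle is just keeping track of what it means to be degenerate in $\ds C$, because the generic maps are the ones contravariant under Joyal duality; once this dictionary is in hand, the bound $n \leq |P|$ is immediate. All other checks are routine and parallel to those for $\ds I$.
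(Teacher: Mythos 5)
Your proof is correct and follows essentially the same route as the paper's: completeness from $s_0$ being the inclusion of the empty poset, local finiteness and discreteness from finite automorphism groups and the discrete finite fibre $\{P\to\un 2\}$ of $d_1$, and locally finite length from the observation that non-degenerate layerings have all layers non-empty, bounding $n$ by $|P|$. The only difference is that you spell out the Joyal-duality dictionary identifying degeneracies with postcomposition along $\un d^k$, which the paper leaves implicit.
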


\begin{proof}
  Since $\C^\iso_{/\un0}$ is contractible, consisting of the empty poset 
  with no non-trivial automorphisms, we know $s_0:\C^\iso_{/\un0}\to\C^\iso_{/\un1}$ 
  is mono, so $\ds C$ is complete. Now observe that $\C^\iso_{/\un1}$ is locally 
  finite as each finite poset has only finitely many automorphisms. 
  We have just seen that $s_0:\C^\iso_{/\un0}\to\C^\iso_{/\un1}$ is finite and 
  discrete, and for $d_1:\C^\iso_{/\un2}\to\C^\iso_{/\un1}$ the fibre over each 
  finite poset $P$ is the finite discrete groupoid $\{P\to\un2\}$ of all 
  monotone maps. 
  Lastly, $\ds C$ is of locally finite length: the degenerate simplices are 
  precisely the layerings with an empty layer.  The fibre of
  $g:\C^\iso_{/\un n}\to\C^\iso_{/\un 1}$ over $P$ has no non-degenerate 
  simplices
  if $n$ is greater than the number of elements of the finite poset $P$.
\end{proof}

\begin{lemma}\label{lem:CC-monoidal}
  $\ds C$ is a monoidal decomposition space under disjoint union.
\end{lemma}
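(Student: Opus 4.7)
The plan is to define the monoidal structure on $\ds C$ levelwise by disjoint union of layered posets, and then verify simplicial compatibility together with the \culf condition. In each degree $n$, I define
$$
\sqcup : \C^\iso_{/\un n} \times \C^\iso_{/\un n} \longrightarrow \C^\iso_{/\un n},
\qquad \bigl(P \to \un n,\, Q \to \un n\bigr) \longmapsto \bigl(P + Q \to \un n\bigr),
$$
where the target layering is the unique monotone map provided by the coproduct universal property in the category of finite posets. The associator and unit (with unit the empty poset) are inherited from those of disjoint union in $\C$, so this is a symmetric monoidal structure on $\ds C$ levelwise.

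Next I verify that $\sqcup$ is a simplicial functor, i.e.\ is compatible with generic and free face maps. The generic maps are given by postcomposition with morphisms $\un g : \un m \to \un n$ in $\unDelta$, and compatibility here is immediate since postcomposition commutes with coproducts. The free maps (outer face maps) are defined by pullback along convex maps in $\unDelta$, and compatibility reduces to the fact that pullback distributes over disjoint union in the category of posets: $(P + Q) \times_{\un n} \un k \simeq (P \times_{\un n} \un k) + (Q \times_{\un n} \un k)$. This holds because coproducts of posets are disjoint and coproduct inclusions are monomorphisms.

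Finally I check the \culf condition for the monoidal functor, which (as in the analogous argument of Proposition~\ref{prop:monoidalrestr}) amounts to showing that the square
$$
\xymatrix{
  \C^\iso_{/\un 1} \times \C^\iso_{/\un 1} \ar[d]_-\sqcup &
  \C^\iso_{/\un k} \times \C^\iso_{/\un k} \ar[l]_-{g \times g} \ar[d]^-\sqcup \\
  \C^\iso_{/\un 1} &
  \C^\iso_{/\un k} \ar[l]^-{g}
}
$$
is a pullback for each $k \geq 0$, where $g$ denotes the unique generic map. Unpacking, a point of the pullback is a pair of posets $(P,Q)$ together with a $k$-layering $P + Q \to \un k$, and the claim is that such data are equivalent to a pair of $k$-layerings $P \to \un k$, $Q \to \un k$. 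This is once again the universal property of the coproduct.

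I do not expect a serious obstacle: the argument is essentially formal, relying only on the fact that disjoint union is a well-behaved coproduct in the category of finite posets (preserved by postcomposition and commuting with the pullbacks used to define the free face maps). The only point requiring any care is to confirm that pullback along convex maps $\un k \rat \un n$ genuinely commutes with disjoint union; but since the relevant pullbacks can be realised as subposet inclusions, as in \S\ref{sec:C}, this is straightforward.
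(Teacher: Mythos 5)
Your proof is correct and follows essentially the same route as the paper: define the monoidal structure levelwise by disjoint union of layered posets, note the levelwise maps assemble into a simplicial map, and establish CULFness via the same pullback square against the unique generic map, using that a $k$-layering of $P+Q$ is the same as a pair of $k$-layerings of $P$ and $Q$. The paper's proof is terser (it leaves the simplicial compatibility as ``clear''), but the content is the same.
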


\begin{proof}
  For fixed $k$, we have $\C^\iso_{/\un k} \times \C^\iso_{/\un k} \to \C^\iso_{/\un k}$ given by 
  disjoint union.  It is clear that these maps assemble into a simplicial map
  $\ds C \times \ds C \to \ds C$. CULFness of this simplicial map
  follows because to give a pair 
  of posets, each with a $k$-layering,
  is the same as giving a pair of
  posets, together with a $k$-layering of their disjoint union.
  In other words, disjoint union of layered posets are computed layer-wise.
  Diagrammatically, this square is a pullback:
  $$\xymatrix{
     \C^\iso_{/\un1} \times \C^\iso_{/\un1} \ar[d] & \ar[l]_{g\times g} \C^\iso_{/\un k}\times \C^\iso_{/\un k} \ar[d] \\
     \C^\iso_{/\un1} & \ar[l]^g \C^\iso_{/\un k} ,
  }$$
  where $g$ is the unique generic map (and $k$ could be $0$).
\end{proof}

%%%%%%%%%%%%%%%%%%%%%%%%%%%%%%%%%%%%%%%%%%%%%%%%%%
\section{Directed restriction species}
%%%%%%%%%%%%%%%%%%%%%%%%%%%%%%%%%%%%%%%%%%%%%%%%%%
\label{sec:DRSp}

We introduce the new notion of 
directed restriction species, with associated
incidence coalgebras 
generalising well-known constructions with rooted
forests~\cite{Dur:1986,Connes-Kreimer:9808042}, acyclic directed
graphs~\cite{Manchon:MR2921530,Manin:0904.4921}, posets and
distributive lattices~\cite{Schmitt:1994,Figueroa-GraciaBondia:0408145}, and double 
posets~\cite{Malvenuto-Reutenauer:0905.3508}.

\begin{blanko}{Directed restriction species.}
  A {\em directed restriction species} is by definition a 
  (pseudo)-functor
  $$
  R:\C\op\to\Grpd ,
  $$
  or equivalently, by the Grothendieck construction, a 
%STRICT%   split 
  right fibration
  $\R\to\C$.  We shall always assume that all values are locally 
  finite groupoids. 
  
  The idea is that the value on a poset $S$ is the groupoid of
  all possible $R$-structures that have $S$ as underlying poset.
  
  A morphism of directed restriction species is just a 
  (pseudo)-natural transformation.
  This defines the category of directed restriction 
  species $\kat{DRSp}$, equivalent to the categories of 
  groupoid-valued presheaves on $\C$, and of right fibrations over $\C$:
  $$
  \kat{DRSp}\simeq\Grpd^{\C\op}\simeq\kat{RFib}_{/\C} .
  $$ 
\end{blanko}

\begin{blanko}{Coalgebras from directed restriction species.}
  Let $R$ be any directed restriction species.  An {\em admissible cut} of
  an object $X\in R[P]$ is by definition a $2$-layering of the underlying
  poset.  In other words, the cut separates $P$ into a lower-set and an
  upper-set.  This agrees with the notion of admissible cut in
  Butcher--Connes--Kreimer (as in \ref{ex:CK} above), and in related
  examples.
    
  A coalgebra is defined by the rule
  \begin{equation}\label{eq:comultRS}
  \Delta(X) = \sum_{c\in \operatorname{cut}(P)}  
  X|D_c \tensor X|U_c,  \qquad X \in R[P],
  \end{equation}
  where the sum is over all admissible cuts $c=(D_c,U_c)$.
  
  Note that the incidence coalgebra of a directed restriction species is generally
  non-cocommutative.  It is cocommutative if and only if it is 
  actually supported on discrete posets, so that in reality it is an 
  ordinary restriction species, as we explain next.
\end{blanko}

\begin{blanko}{Sets as discrete posets.}\label{RSpsubDRSp}
  Any finite set can be regarded as a discrete poset, and any
  injective map of sets is then a convex map.  Hence there is a
  natural functor $\I \to \C$.  This functor is easily seen to be a
  right fibration.  Hence every restriction species is also a directed
  restriction species.  This is to say that there is a natural functor
  $$
  \kat{RSp} \to \kat{DRSp}
  $$
  from restriction species to directed
  restriction species, clearly fully faithful. 
\end{blanko}

\begin{blanko}{Directed restriction species as decomposition spaces.}
  If $\R\to\C$ is a directed restriction species, let $\ds R_k$ be the
  groupoid of $R$-structures on posets $P$ with a $k$-layering.
  (In other words, $\ds R_2$ is the groupoid of
  $R$-structures with an admissible cut, and $\ds R_k$ is the groupoid of
  $R$-structures with $k-1$ compatible admissible cuts.)
\end{blanko}

\begin{theorem}\label{DRisDS}
  The $\ds R_k$ form a simplicial groupoid $\ds R$, which is a
  decomposition space.  Morphisms of directed restriction species induce
  \culf functors between decomposition spaces. The construction 
  defines a functor from the category of directed restriction species and 
  their morphisms to that of decomposition spaces and \culf maps.
\end{theorem}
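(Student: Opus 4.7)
The plan is to mirror the proof of Theorem~\ref{RisDS} for ordinary restriction species, with $\ds C$ now playing the role that $\ds I$ played there. The key underlying fact is that $\ds C$ is already known to be a decomposition space (from \S\ref{sec:C}), so by Lemma~\ref{lem:CULF} it suffices to construct $\ds R$ as a simplicial groupoid \culf over $\ds C$.

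First I would define, for each $k\geq 0$, the groupoid $\ds R_k$ as the homotopy pullback
\[
\ds R_k \;=\; \C^{\iso}_{/\un k} \times_{\C^{\iso}_{/\un 1}} \R^{\iso},
\]
along the underlying-poset functor $\R^{\iso} \to \C^{\iso} = \C^{\iso}_{/\un 1}$. Thus an object of $\ds R_k$ is an $R$-structure $X$ on a finite poset $P$ equipped with a monotone layering $P \to \un k$. Covariant functoriality in all maps of $\unDelta \cong \Deltagen\op$ is inherited from postcomposition on the $\C^{\iso}_{/\un k}$ factor (the $R$-structure is not touched), so the generic face and degeneracy maps of $\ds R$ are defined straight from those of $\ds C$, and by construction the forgetful projection $\ds R \to \ds C$ is strictly cartesian on generic maps.

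Next, to define the outer (free) face maps I would use the right-fibration hypothesis on $\R \to \C$. An outer face map of $\ds C$ is given by pullback along a convex map $\un{k{-}1} \rat \un k$; concretely, for an object $P \to \un k$ of $\ds C_k$, the outer face picks out a convex subposet $P' \subset P$. Since $\R \to \C$ is a right fibration, the convex inclusion $P' \hookrightarrow P$ lifts uniquely (up to equivalence) to an arrow $X|P' \to X$ in $\R$, so the rule $(P \to \un k,\, X) \mapsto (P' \to \un{k{-}1},\, X|P')$ defines the outer face map of $\ds R$ as a map over $\ds C$. This is essentially the base change of the outer face map of $\ds C$ along the projection, and hence the square
\[
\xymatrix{\ds R_k \ar[r]\ar[d] & \ds R_{k-1}\ar[d] \\ \ds C_k \ar[r] & \ds C_{k-1}}
\]
is a pullback, meaning $\ds R \to \ds C$ is cartesian on free face maps as well.

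The main technical step is checking that these generic and free operations assemble into a genuine simplicial groupoid, i.e.\ verifying the simplicial identities. The generic--generic ones hold by functoriality in $\unDelta$; the generic--free and free--free ones hold by Beck--Chevalley applied to the pullback squares in $\unDelta$ from Lemmas~\ref{lem:somepbk1}--\ref{lem:po=pbk}, combined with the fact that the right-fibration lifts of convex maps are unique up to canonical equivalence. This is where one must be a little careful, since pullbacks of $R$-structures along convex maps and postcompositions of layerings need to commute up to coherent isomorphism. (As remarked in the paper, the cleaner conceptual treatment via sesquicartesian fibrations in \S\ref{sec:sesq} will later absorb exactly this verification, but a direct hand check in the spirit of Proposition~\ref{B/k} is available.) Once the simplicial identities are in place, the projection $\ds R \to \ds C$ is \culf by the cartesianness observed above, and Lemma~\ref{lem:CULF} delivers that $\ds R$ is a decomposition space.

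Finally, a morphism of directed restriction species $R' \to R$ is, by the Grothendieck correspondence, a morphism $\R' \to \R$ of right fibrations over $\C$. Pulling back along each $\C^{\iso}_{/\un k} \to \C^{\iso}_{/\un 1}$ induces a simplicial map $\ds R' \to \ds R$ sitting in a commutative triangle over $\ds C$; since both legs to $\ds C$ are \culf, the horizontal map is \culf as well. Functoriality of the assignment $R \mapsto \ds R$ and $f \mapsto \ds f$ follows from the functoriality of the pullback construction. The main obstacle, as usual in such simplicial arguments, is keeping track of the coherences in the outer face construction; once the right-fibration property is used systematically to lift convex pullbacks, everything else is formal.
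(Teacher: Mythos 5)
Your proposal follows exactly the route the paper itself takes: its proof of Theorem~\ref{DRisDS} simply states that the result can be proved in the same way as Theorem~\ref{RisDS}, i.e.\ by forming $\ds R_k = \C^{\iso}_{/\un k}\times_{\C^{\iso}_{/\un 1}}\R^{\iso}$, checking the simplicial identities by hand along the lines of Proposition~\ref{B/k} (generic part by functoriality in $\unDelta$, free part via the right-fibration lifts, compatibility via Beck--Chevalley), observing that the projection $\ds R\to\ds C$ is \culf, and invoking Lemma~\ref{lem:CULF}; your treatment of morphisms is likewise the paper's. (The paper also offers a second, more conceptual proof via sesquicartesian fibrations, deferred to Theorem~\ref{thm:RSP&DRSp->DS}; you correctly note this but do not pursue it.)

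One genuine slip, though not a fatal one: you claim that the outer face map of $\ds R$ is the base change of that of $\ds C$, so that the corresponding square is a pullback and $\ds R\to\ds C$ is cartesian on free face maps as well. This is false in general. The fibre of $\ds R_k\to\ds C_k$ over a layering $P\to\un k$ is the groupoid $R[P]$, whereas the fibre of $\ds R_{k-1}\to\ds C_{k-1}$ over the restricted layering $P'\to\un{k-1}$ is $R[P']$, and the induced map on fibres is the restriction $R[P]\to R[P']$, which is not an equivalence for a typical directed restriction species (rooted forests, say). Fortunately nothing in your argument depends on this claim: the \culf condition only requires cartesianness on generic maps, which holds by construction of $\ds R_k$ as a pullback over $\ds C_k$, and that is all Lemma~\ref{lem:CULF} needs. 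Deleting that one sentence leaves a proof that matches the paper's intended argument.
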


\begin{proof}
  This can be proved in the same way as Theorem \ref{RisDS} for ordinary restriction
  species, or a more elegant proof will be given in Theorem~\ref{thm:RSP&DRSp->DS}, after setting up fancier machinery.
\end{proof}

Since we assume  directed restriction species $R: \C\op\to\Grpd$
take
locally finite groupoids as values, it follows by Lemma~\ref{lem:R1} that 
$\ds R_1$ is a locally finite groupoid.  Now by Lemmas~\ref{lem:culf-local} 
and~\ref{lem:Cfinite} we have the necessary finiteness
conditions to obtain classical incidence coalgebras by taking homotopy 
cardinality:

\begin{lemma}\label{lem:DRplocdiscrete}
  The decomposition space $\ds R$ is complete, locally finite, locally discrete,
  and of locally finite length.
\end{lemma}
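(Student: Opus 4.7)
The plan is to mirror exactly the proof of Proposition~\ref{prop:Rlocallydiscrete}, with $\ds I$ replaced by $\ds C$, exploiting that the construction $\R\mapsto \ds R$ produces a simplicial groupoid that is \culf over $\ds C$.

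First I would verify that $\ds R_1$ is locally finite. By construction, $\ds R_1 = \R^\iso$, so for each isoclass of finite poset $P$ there is a homotopy fibre sequence
$$
R[P] \longrightarrow \R^\iso \longrightarrow \C^\iso,
$$
exactly as in Lemma~\ref{lem:R1}. Since $\C^\iso$ is locally finite (each finite poset has only finitely many automorphisms, as used in Lemma~\ref{lem:Cfinite}) and $R[P]$ is locally finite by our standing assumption on directed restriction species, it follows that $\ds R_1$ is locally finite. This is essentially Lemma~\ref{lem:R1} applied in the directed setting.

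Second, I would invoke Theorem~\ref{DRisDS}, which tells us that the projection $\ds R \to \ds C$ is a \culf functor of decomposition spaces, and Lemma~\ref{lem:Cfinite}, which gives us that $\ds C$ is complete, locally finite, locally discrete, and of locally finite length. Each of these properties is measured on generic maps: completeness concerns the (generic) degeneracy $s_0$; local finiteness and local discreteness are measured on $s_0$ and $d_1:\ds C_2\to\ds C_1$; local finiteness of length is measured on the fibres of the generic maps $\ds C_n\to\ds C_1$. Hence by Lemma~\ref{lem:culf-local}, all four properties transfer from $\ds C$ to $\ds R$, with the single extra verification that $\ds R_1$ is locally finite --- which is exactly what we established in the previous step.

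There is no genuine obstacle here; the only mild subtlety is to notice that completeness is also a \culf-local property (it reduces to the fact that in a cartesian square the pullback of a monomorphism is again a monomorphism), but this is immediate from \culf being cartesian on the degeneracy $s_0$. Everything else is a direct citation of Theorem~\ref{DRisDS}, Lemma~\ref{lem:culf-local}, Lemma~\ref{lem:Cfinite}, and the directed analogue of Lemma~\ref{lem:R1}.
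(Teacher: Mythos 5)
Your proposal is correct and follows exactly the route the paper takes: the paper's justification for this lemma is precisely the sentence preceding it, namely that $\ds R_1$ is locally finite by (the directed analogue of) Lemma~\ref{lem:R1}, and the four properties then transfer from $\ds C$ along the \culf projection via Lemmas~\ref{lem:culf-local} and~\ref{lem:Cfinite}, mirroring the proof of Proposition~\ref{prop:Rlocallydiscrete}. Your extra remark that completeness is also \culf-local (being measured on the degeneracy $s_0$, a generic map) is a correct and harmless elaboration of what the paper leaves implicit.
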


\begin{lemma}
  The incidence coalgebra obtained by taking homotopy cardinality 
  coincides with formula~\eqref{eq:comultRS}.
\end{lemma}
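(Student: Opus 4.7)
The plan is to follow exactly the same pattern used for the analogous statement about ordinary restriction species, replacing subsets by admissible cuts and using that the groupoid $\C^\iso_{/\un 2}$ parametrises admissible cuts rather than set partitions. The finiteness and discreteness conditions needed for cardinality to make sense have already been assembled in Lemma~\ref{lem:DRplocdiscrete}, so the computation will reduce to a straightforward counting argument once the fibres are identified.

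First, I would recall that at the objective level the comultiplication on $\Grpd_{/\ds R_1}$ is the functor
$$
\Delta = (d_2,d_0)_! \circ d_1^* : \Grpd_{/\ds R_1} \longrightarrow \Grpd_{/\ds R_1 \times \ds R_1}.
$$
Given $X \in R[P]$ regarded as $\name X : 1 \to \ds R_1$, the pullback $d_1^*(\name X)$ is the homotopy fibre of $d_1 : \ds R_2 \to \ds R_1$ over $X$. By definition of $\ds R_2$ as the groupoid of $R$-structures equipped with a $2$-layering of the underlying poset, and by the construction of $d_1$ as `forgetting the cut' (i.e.\ postcomposing the layering $P \to \un 2$ with $\un s^0 : \un 2 \to \un 1$), this fibre is canonically the groupoid of pairs $(X, c)$ where $c$ is an admissible cut of $P$ --- equivalently, a monotone map $P \to \un 2$.

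Next, I would compose along $(d_2,d_0) : \ds R_2 \to \ds R_1 \times \ds R_1$ and identify it, via the description of the outer face maps by pullback along $\un d^\top$ and $\un d^\bot$ together with the restriction functoriality of $R$, as the map sending $(X,c)$ to $(X|D_c,\, X|U_c)$. This exhibits $\Delta(\name X)$ as the groupoid $\operatorname{cut}(X)$ fibred over $\ds R_1 \times \ds R_1$ by the rule $c \mapsto X|D_c \tensor X|U_c$.

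Finally, I would take homotopy cardinality. By Lemma~\ref{lem:DRplocdiscrete}, $\ds R$ is locally discrete, so the fibre $\operatorname{cut}(X)$ is a discrete groupoid; hence its homotopy cardinality is the plain count of admissible cuts. Writing out $\sum_{c\in \operatorname{cut}(P)} \delta_{X|D_c} \tensor \delta_{X|U_c}$ recovers formula~\eqref{eq:comultRS}. The only minor subtlety, and the place that deserves real care, is checking that the identification of the $d_1$-fibre with admissible cuts is compatible with automorphisms of $X$ --- that is, two cuts giving isomorphic pairs $(X|D_c, X|U_c)$ via an automorphism of $X$ should be counted once; but this is automatic from the groupoid-slice formalism, since one works in $\Grpd_{/\ds R_1 \times \ds R_1}$ where isomorphic objects are naturally identified, exactly paralleling the argument sketched for ordinary restriction species.
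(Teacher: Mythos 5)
Your proposal is correct and follows essentially the same route as the paper: the paper's own proof is a one-line appeal to local discreteness (Lemma~\ref{lem:DRplocdiscrete}) making the homotopy sum an ordinary sum, with the detailed unpacking of the span $\Delta=(d_2,d_0)_!\circ d_1^*$ and the identification of the $d_1$-fibre already spelled out in the analogous sketch for ordinary restriction species, which you reproduce faithfully in the directed setting.
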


\begin{proof}
  The main point here is that since $\ds R$ is locally discrete by 
  Lemma~\ref{lem:DRplocdiscrete}, the 
  homotopy sum resulting from the decomposition space is just an ordinary sum, 
  as in \eqref{eq:comultRS}.
\end{proof}

\begin{blanko}{Monoidal directed restriction species.}\label{monoidaldirected}
  The category $\C$ is symmetric monoidal under disjoint union.  We define
  a {\em monoidal directed restriction species} to be a directed
  restriction species $\R\to\C$ for which the total space $\R$ has a
  monoidal structure and the right fibration is also a strong monoidal
  functor.
  This extends the notion of ordinary monoidal restriction species
  introduced in \ref{monoidalrestr}, as $\I \to \C$ is easily seen to be a
  monoidal directed restriction species.  Since strong monoidal right
  fibrations compose, every monoidal restriction species is also a monoidal
  directed restriction species.
  We have:
\end{blanko}

\begin{prop}\label{prop:mdrsp-mds}
  The functor of Theorem~\ref{DRisDS} extends to a functor from monoidal
  directed restriction species and their morphisms, to monoidal
  decomposition spaces and \culf monoidal functors.
\end{prop}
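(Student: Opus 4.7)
The plan is to follow the blueprint of Proposition~\ref{prop:monoidalrestr}, replacing $\I$ by $\C$ and $\ds I$ by $\ds C$ throughout, and to leverage the fact that $\ds C$ is already known to be a monoidal decomposition space under disjoint union (Lemma~\ref{lem:CC-monoidal}). First I would unpack the hypothesis: a monoidal directed restriction species is a right fibration $p : \R \to \C$ equipped with a strong monoidal structure $\sqcup$ on $\R$ such that $p$ is strong monoidal. In particular, for any $R$-structures $X_1$ on $P_1$ and $X_2$ on $P_2$, and any convex inclusions $K_i \subset P_i$, we obtain a canonical isomorphism
\[
(X_1 \sqcup X_2) \mid (K_1 + K_2) \;\simeq\; (X_1 \mid K_1) \sqcup (X_2 \mid K_2),
\]
by the same unique-comparison-of-cartesian-lifts argument used in \ref{monoidalrestr}. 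This is the structural input that will make everything work.

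Next I would define the monoidal structure on $\ds R$ levelwise by
\[
\sqcup \;:\; \R^{\iso}_{/\un n} \times \R^{\iso}_{/\un n} \longrightarrow \R^{\iso}_{/\un n},
\]
sending a pair of $R$-structures with $n$-layerings $(X_1, \ell_1)$, $(X_2,\ell_2)$ to $X_1 \sqcup X_2$ equipped with the layering $[\ell_1,\ell_2] : P_1 + P_2 \to \un n$. This is well defined because $p$ is strong monoidal, so layerings of the underlying posets combine into a layering of their disjoint union. To see that these levelwise functors assemble into a simplicial map $\ds R \times \ds R \to \ds R$, I would check compatibility with generic and with free maps separately: generic maps act by postcomposition on the layering, which is manifestly compatible with taking disjoint union of layerings; free maps act by pullback, i.e.\ by restriction along the convex inclusions $\un d^\bot, \un d^\top$, and the displayed restriction compatibility above is exactly what is needed.

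Now I would verify the CULF condition, which reduces (by Lemma~\ref{lem:culf-local} and since $\sqcup$ is a simplicial map between decomposition spaces) to checking cartesianness on generic maps. Equivalently, I would verify that for each generic $g : [n] \genmap [1]$ the square
\[
\xymatrix{
\R^{\iso}_{/\un 1}\times\R^{\iso}_{/\un 1} \ar[d]_{\sqcup} & \R^{\iso}_{/\un n}\times\R^{\iso}_{/\un n} \ar[l]_-{g^{*}\times g^{*}}\ar[d]^{\sqcup}\\
\R^{\iso}_{/\un 1} & \R^{\iso}_{/\un n} \ar[l]^-{g^{*}}
}
\]
is a pullback; this expresses the tautology that specifying an $n$-layering on a disjoint union $X_1\sqcup X_2$ is the same as specifying an $n$-layering on each of $X_1$ and $X_2$ separately, which follows from the universal property of coproducts in $\C$ applied fibrewise (using again that $p$ is strong monoidal). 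Finally, a morphism of monoidal directed restriction species is a strong monoidal morphism of right fibrations, and the induced simplicial map $\ds R' \to \ds R$ from Theorem~\ref{DRisDS} is then automatically strong monoidal levelwise, and CULF by that theorem.

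The main obstacle I anticipate is not any one step but the bookkeeping to ensure the monoidal structure is \emph{strictly} a simplicial functor while the restriction-compatibility isomorphisms are only pseudo-natural; as in \ref{simp-layering} one resolves this by choosing the pullbacks defining the outer face maps to be literal convex-subposet inclusions, so that disjoint union of restrictions equals restriction of disjoint union on the nose. Once this is arranged, the pullback square above is strict and (being a pullback of iso-fibrations) also a homotopy pullback, and the CULF and monoidality conclusions follow directly.
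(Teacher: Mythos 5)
Your proposal is correct and is essentially the argument the paper intends: the paper states this proposition without a separate proof, relying on the proofs of Proposition~\ref{prop:monoidalrestr} and Lemma~\ref{lem:CC-monoidal}, and your levelwise definition of $\sqcup$ on $\R^\iso_{/\un n}$, the restriction-compatibility isomorphism $(X_1\sqcup X_2)\mid(K_1+K_2)\simeq(X_1\mid K_1)\sqcup(X_2\mid K_2)$, and the pullback square witnessing {\culf}ness are exactly that argument transported from $\I$ to $\C$. (Two cosmetic slips, neither affecting the mathematics: the generic map in your square is the unique active map $[1]\genmap[n]$ rather than one $[n]\genmap[1]$, and Lemma~\ref{lem:culf-local} concerns transporting finiteness properties along \culf maps, not reducing the \culf check to generic maps.)
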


If a restriction species is monoidal, the associated incidence
coalgebra becomes a bialgebra.  The projection $\ds R \to \ds C$ is
monoidal, and so the incidence bialgebra of $\R$ comes with a bialgebra
homomorphism to the incidence bialgebra of $\C$.

\bigskip

Except when explicitly mentioned otherwise, all the following examples are in fact monoidal directed restriction 
species and hence induce bialgebras.

\begin{blanko}{First examples.}\label{ex:DRSp}
  Just as for ordinary restriction species, it is sometimes useful to describe 
  a directed restriction species by describing the associated 
  right fibration $\R \to\C$, where the restriction structure is 
  encoded in the arrows.
  
  (1) {\em Posets.} The category $\C$ of finite posets and convex maps is
  the terminal directed restriction species.  The resulting coalgebra
  comultiplies a poset by splitting it along `admissible cuts' into
  lower-sets and upper-sets (cf.~Example~\ref{ex:posetcoalg}).
  
  (2) {\em One-way categories and M\"obius categories.} For a finite
  category $\CC$ to have an underlying poset, it is required that for any
  two objects $x,y \in \CC$ at least one of the hom sets $\Hom_{\CC}(x,y)$
  and $\Hom_{\CC}(y,x)$ is empty.  (This implies that $\CC$ is skeletal.)
  The underlying poset $\underline\CC$ is then given by declaring $x\leq y$
  to mean that $\Hom_{\CC}(x,y)$ is nonempty.  Such categories form a
  directed restriction species $U$: for a convex map of posets $K
  \subset \underline\CC$, the restriction of $\CC$ to $K$ is given as the
  full subcategory spanned by the objects in $K$.  For the corresponding
  right fibration $\mathbb{U}\to\C$, the arrows in $\mathbb{U}$ are the
  fully faithful \culf functors (automatically injective on objects since
  the categories are skeletal).
  
  With the further condition imposed that the only endomorphisms are the
  identities, we arrive at the notion of \emph{finite delta}, in the
  terminology of Mitchell~\cite{Mitchell:RSO}, now more commonly called
  finite \emph{one-way categories}.  This is equivalent
  (cf.~\cite{LawvereMenniMR2720184}) to the notion of finite \emph{M\"obius
  category} of Leroux~\cite{Leroux:1975}.  M\"obius categories play an
  important role as a generalisation of locally finite posets, and in
  particular admit M\"obius inversion.  It is clear that we also have a
  directed restriction subspecies of finite M\"obius categories.
\end{blanko}

\begin{blanko}{Convex-closed classes of posets.}
  Ordinary (restriction) species are mostly about structure, not property,
  since the only property that can be assigned to a finite set is its
  cardinality.  For directed restriction species, property plays a more
  important role, since posets can have many properties.  Any class of
  posets closed under taking convex subposets and closed under isomorphisms
  defines a (fully faithful) right fibration, and hence a directed
  restriction species.  Such a class may or may not be monoidal
  under disjoint union. (Note that this notion, which could reasonably be
  called convex-closed classes of posets, is different from the classical
  closure property in incidence coalgebras, where a class of {\em intervals}
  is required to be closed under subintervals~\cite{Schmitt:1994}.)

  For example, forests (cf.~\ref{ex:trees} below), linear orders, and 
  discrete posets (cf.~\ref{RSpsubDRSp}) are convex-closed classes of posets,
  and form (monoidal) directed restriction species.
  Considering linear orders leads to $\mathbb{L}$-species, in the sense of
  \cite{Bergeron-Labelle-Leroux}.

  Just as in the case of ordinary restriction species, the minimal such
  `ideals' are defined by picking any single poset $P$, and considering the
  `principal ideal generated by $P$', more precisely the slice category
  $\C_{/P}$.  Note that $\C_{/P}$ cannot be monoidal in the sense 
  of \ref{monoidaldirected}.
  Since the morphisms in $\C$ are just the convex
  maps, $\C_{/P}$ is equivalent to the full subcategory
  of $\C$ consisting of $P$ and all its convex subposets.  This reflects
  the standard fact that any element in a coalgebra spans a subcoalgebra.
\end{blanko}

\begin{blanko}{Examples: various flavours of trees (actually forests).}\label{ex:trees}
  (1) {\em Combinatorial trees.} 
  Consider the directed restriction species of rooted forests:
  a rooted forest has an underlying poset, whose convex
  subposets inherit each a rooted-forest structure.
  Regarded as a right fibration $\mathbb H \to \C$, the category 
  $\mathbb H$ has objects rooted forests and morphisms subforest
  inclusions (not required to preserve the root).
  The resulting bialgebra is the Butcher--Connes--Kreimer
  Hopf algebra~\cite{Dur:1986,Connes-Kreimer:9808042} already
  treated in \ref{ex:CK}.  As explained, this is not a Segal
  groupoid: a tree cannot be reconstructed from its layers.
  An important non-commutative variation comes from planar 
  forests~\cite{Foissy:2002I}.

  (2) {\em Operadic trees (with nodes).} Consider the combinatorial
  structure of rooted forests allowing open-ended edges (leaves and root)
  as in \cite{Kock:0807,GalvezCarrillo-Kock-Tonks:1207.6404}, but
  disallowing isolated edges, i.e.~edges not adjacent to any node.  As
  before, each such forest has an underlying poset of nodes, and for each
  convex subset of the node set, there is induced a forest again.  These
  are full forest inclusions, meaning that for each node, all incoming
  edges as well as the outgoing edge must be included (see \cite{Kock:0807}
  for details).  It is an important feature that the local structure at the
  nodes is always preserved under taking such subforests.  This means that
  one can consider trees whose nodes are decorated with `operation symbols'
  of matching arity (more precisely $P$-trees for $P$ a polynomial
  endofunctor~\cite{Kock:0807,Kock:MFPS28}) 
  and that subtrees inherit
  such decorations.  This is not possible for combinatorial trees, where
  the cuts destroy the local structure of nodes (such as for example being
  a binary node).  Operadic forests (with nodes) form a directed
  restriction species.  Note that in contrast to what happens for
  combinatorial trees, cuts do not delete inner edges, they cut them in two (as
  a consequence of the fullness of subforest inclusions).  But if an 
  isolated edge results from a cut, it is deleted, as illustrate in this
  figure:
  \begin{center}\begin{texdraw}
	  \move (-5 -5)
	  \bsegment
	  \move (0 0) \lvec (0 10) \onedot \lvec (-5 22) \onedot \lvec 
	  (-9 30) \move (-5 22) \lvec (-4 30) \move (0 10) \lvec (1 30)
    \move (0 10) \lvec (7 30)
	\move (-10 15) \clvec (-4 17)(4 17)(10 15)
	\esegment
	
	\htext (30 10){$\leadsto$}
	
	\move (50 0) 
	\bsegment
	\move (0 10) \lvec (0 20) \onedot \lvec (-5 30) \move (0 20) 
	\lvec (5 30)
	\esegment
	
	\move (70 -10)
	\bsegment
	\move (0 0) \lvec (0 10) \onedot \lvec (-7 20) \move (0 10) 
	\lvec (0 20) \move (0 10) \lvec (7 20)
	\esegment
	\end{texdraw}
	\end{center}

  (3) {\em Non-example: operadic trees, including nodeless ones.} If one
  allows the nodeless tree, the resulting notion of forest does not form a
  directed restriction species.  Indeed, with all the nodeless forests
  being different structures on the empty set of nodes, and since there
  exist non-invertible maps between such node-less forests, the functor to
  $\C$ cannot be a right fibration (it has non-invertible arrows in its
  fibres).  (It is only over the empty set that this problem arises: for
  trees with nodes, every non-invertible map can be detected on nodes.)

  This variation, which is subsumed in the class of decomposition
  spaces coming from operads~\cite{GKT:ex,Kock-Weber:1609.03276}, has
  some different features which have been exploited to good effect in
  various contexts
  \cite{GalvezCarrillo-Kock-Tonks:1207.6404,Kock:1109.5785,Kock:1411.3098,Kock:1512.03027}.
  In particular it is important that the cut locus expresses a type
  match between the roots of the crown forest and the leaves of the
  bottom tree, and that there is a grading \cite{GKT:DSIAMI-2} given
  by number of leaves minus number of roots.  The incidence bialgebra
  is not connected: the zeroth graded piece is spanned by the
  node-less forests.  These are all group-like, and the connected
  quotient (dividing out by this coideal) is precisely the incidence
  Hopf algebra of the directed restriction species of forests without
  isolated edges.  One can then further take \emph{core}
  \cite{Kock:1109.5785,Kock:1512.03027}, which means
  shave off leaves and root (and forget the $P$-decoration).  This is
  a monoidal \culf functor, and altogether there is a monoidal \culf
  functor from the decomposition space of $P$-trees to the
  decomposition space of combinatorial trees.
  This is an
  interesting example of a relative $2$-Segal space in the sense of
  Young~\cite{Young:1611.09234} and Walde~\cite{Walde:1611.08241}.
\end{blanko}

\begin{blanko}{Examples: various flavours of acyclic directed graphs.}
  (1) {\em Acyclic directed graphs.} These have underlying posets,
  where $x\leq y$ if there is a directed path from $x$ to $y$.  Any
  convex subposet of the poset of vertices induces a subgraph $S$,
  which is convex in the usual sense of directed graphs, meaning that
  any directed path from $x\in S$ to $y\in S$ in the whole graph must
  be entirely contained in $S$.  There is now induced a natural notion
  of admissible cut, similar to Butcher--Connes--Kreimer, and a Hopf
  algebra results (see Manchon~\cite[\S5]{Manchon:MR2921530}).

  (2) {\em Acyclic directed open graphs.} Now we allow open-ended edges,
  thought of as input edges and output edges (see~\cite{Kock:1407.3744}), but
  we do not allow graphs containing isolated edges.  This situation and the
  resulting bialgebra have been studied by 
  Manchon~\cite[\S4]{Manchon:MR2921530}.  Interesting decorated versions have been studied by
  Manin~\cite{Manin:MR2562767,Manin:0904.4921} in the theory of
  computation.  His graphs are decorated by operations on partial recursive
  functions and switches.

  (3) {\em Non-example: Acyclic directed open graphs, allowing isolated 
  edges.} Again, if one allows isolated edges, it
  is not a restriction species.  In contrast it is a Segal groupoid, and
  the comultiplication resulting from it enjoys a nice grading (by number
  of input edges minus number of output edges).
\end{blanko}

\begin{blanko}{Examples: double posets and related structures.}
  A {\em double poset}~\cite{Malvenuto-Reutenauer:0905.3508} is a
  poset $(P,\leq)$ with an additional poset structure $\preccurlyeq$, not
  required to have any compatibility with $\leq$.  Let $\D$ denote the
  category of finite double posets $(P,\leq,\preccurlyeq)$ and inclusions
  that are convex for $\leq$.  For every $\leq$-convex subset $(K,\leq)
  \subset (P,\leq)$, there is induced a $\preccurlyeq$ structure on $K$,
  simply by the fact that posets form an ordinary restriction species
  (cf.~\ref{ex:posets}~(3)).  It follows that $\D \to \C$ is a right
  fibration, and hence a directed restriction species.  The associated 
  incidence coalgebra was first studied by Malvenuto and 
  Reutenauer~\cite{Malvenuto-Reutenauer:0905.3508}; see 
  \cite{Foissy:MR3016301} and \cite{Foissy:MR3046302}
  for more recent developments.
  
  The case where the second poset structure is a linear order is called
  {\em special double poset} or just {\em special poset}, and is 
  equivalent to Stanley's notion of labelled poset~\cite{Stanley:Mem1972}.
  
  Double posets and special posets are just two instances of the following
  general construction: for any ordinary restriction species $R$, consider
  the directed restriction species consisting of having simultaneously a
  poset structure and an $R$-structure, without compatibility conditions.
  Let the morphisms be inclusions that are convex for the poset structure.
\end{blanko}

\begin{blanko}{Decalage.}\label{DecR=NRlow}
  While for ordinary restriction species $\R\to\I$
  we have $\Dec_\bot \ds R  \simeq \fatnerve \R$ and 
  $\Dec_\top \ds R \simeq \fatnerve \R\op$, the situation is slightly 
  more complicated for directed restriction species.  The result is 
  (as we shall see in Proposition~\ref{prop:Dec-of-DRSp}):
  $$
  \Dec_\bot \ds R \simeq \fatnerve \R^\low 
  \qquad
  \qquad
  \Dec_\top \ds R \simeq \fatnerve (\R^\upper)\op 
  $$
  where $\R^\low \subset \R$ denotes the subcategory of 
  $R$-structures with all the objects, but only the maps whose 
  underlying poset map is a lower-set inclusion.  (Similarly, 
  $\R^\upper$ has only upper-set inclusion.)
  (Note that this result does not contradict \ref{DecR=NR}: if an ordinary
  restriction species $\R$ is considered a directed restriction species (as
  in \ref{RSpsubDRSp}) supported on discrete posets, then all inclusion
  maps are both lower-set inclusions and upper-set inclusions.)
  
  This result is interesting because it relates to classical
  reduced-incidence-coalgebra constructions.  Recall from
  Example~\ref{ex:CK} that D\"ur~\cite{Dur:1986} constructs the
  Butcher--Connes--Kreimer Hopf algebra as the reduced incidence
  coalgebra of the (opposite of the) category of rooted forests and
  root-preserving inclusions.  The reduction identifies two
  forests inclusions if they have isomorphic complement crowns.  The
  reduction is now seen to be the upper-dec map, since the underlying 
  poset of a forest is oriented from leaves to roots, so the
  root-preserving inclusions are the upper-set inclusions.
\end{blanko}

%%%%%%%%%%%%%%%%%%%%%%%%%%%%%%%%%%%%%%%%%%%%%%%%%%
\section{Convex correspondences and `nabla spaces'}
%%%%%%%%%%%%%%%%%%%%%%%%%%%%%%%%%%%%%%%%%%%%%%%%%%
\label{sec:nabla}

\begin{blanko}{Convex correspondences.}
  Consider the category $\newnabla$ of {\em convex correspondences} in 
  $\unDelta$, a subcategory of the category of spans in
  $\unDelta$.
  Objects are those of $\unDelta$, and morphisms are spans
  $$
  \xymatrix{
  \un n' & \ar@{ >->}[l]_j \un n \ar[r]^f & \un k}
  $$
  where $j$ is convex.
  Composition of such spans is given by pullback, which exist by Lemma~\ref{convexpbk}.
  By construction, $\newnabla$ has a factorisation system in which 
  the left-hand
  class (called {\em backward convex} maps) consists of spans of the form $\xymatrix{\cdot & \ar@{ >->}[l] \cdot 
  \ar[r]^= &\cdot}$, and the right-hand class (called {\em ordinalic} maps)
  consists of spans of the form $\xymatrix{\cdot & \ar[l]_= \cdot 
  \ar[r] &\cdot}$.
  Composition of an ordinalic map followed by backward convex map is defined by
  \begin{equation}\label{nabla-comp}
  (\xymatrix{\cdot & \ar@{ >->}[l]_i \cdot \ar[r]^= &\cdot})
  \circ
  (\xymatrix{\cdot & \ar[l]_= \cdot \ar[r]^g &\cdot})
  \;\;=\;\;
  (\xymatrix{\cdot & \ar@{ >->}[l]_j \cdot \ar[r]^f &\cdot})
  \end{equation}
  with reference to the pullback square
  $$\xymatrix{
     \cdot \ar[d]_g & \cdot \dlpullback \ar[d]^f \ar@{ >->}[l]_{j} \\
     \cdot  & \ar@{ >->}[l]^i \cdot
  }$$
\end{blanko}

\begin{lemma}\label{DeltaNabla}
  There is a canonical functor
  $$\gamma:\simplexcategory\op
  \longrightarrow \newnabla,\qquad[n]
  \longmapsto\un n,$$
  restricting to isomorphisms 
\begin{equation}\label{isos-hahaha}
  \Deltagen\op  \cong 
  \unDelta \cong  \newnabla_{\mathrm{ordinalic}},
\quad \quad
(\simplexcategory_{\mathrm{free}}^{\geq1})\op  \cong 
(\unDelta_{\mathrm{convex}}^{\geq1})\op \cong 
\newnabla_{\mathrm{back.conv.}}^{\geq1} ,
\end{equation}  
  and sending all maps $[0]\to[n]$ in $\simplexcategory$
  to the zero map $\un n \lat \un 0 \to \un 0$ in $\newnabla$.
In particular, $\gamma$ is bijective on objects and full.
\end{lemma}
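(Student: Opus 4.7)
The plan is to assemble $\gamma$ from the two ``half'' correspondences already in hand and then verify functoriality via Lemma~\ref{lem:free-gen-Delta}.

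\textbf{Construction.} Set $\gamma([n]) := \un n$ on objects. For a morphism $h\op:[n]\to[m]$ in $\simplexcategory\op$ (that is, $h:[m]\to[n]$ in $\simplexcategory$), factor $h$ uniquely as a generic map $g:[m]\genmap[k]$ followed by a free map $f:[k]\rat[n]$. When $k\geq 1$, translate $g$ via Joyal duality (Lemma~\ref{Joyal-duality}) to a map $\un g:\un k\to\un m$ in $\unDelta$ and $f$ via Lemma~\ref{lem:canonical-iso-hahaha} to a convex map $\un f:\un k\rat\un n$ in $\unDelta$; declare $\gamma(h\op)$ to be the span $\un n\leftarrow\un k\to\un m$ with left leg $\un f$ and right leg $\un g$. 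When $k=0$, the map $h$ is necessarily constant, and we assign it the unique span $\un n\lat\un 0\to\un 0$, the designated zero map. With these definitions the two restrictions in~\eqref{isos-hahaha} come for free: on generic maps $\gamma$ is Joyal duality followed by the tautological identification of $\unDelta$ with the ordinalic subcategory of $\newnabla$, and on positive-degree free maps it is Lemma~\ref{lem:canonical-iso-hahaha} composed with the tautological identification of $(\unDelta_{\mathrm{convex}}^{\geq 1})\op$ with $\newnabla_{\mathrm{back.conv.}}^{\geq 1}$.

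\textbf{Functoriality.} Given composable $h_1:[n_0]\to[n_1]$ and $h_2:[n_1]\to[n_2]$ in $\simplexcategory$ with generic-free factorisations $h_i = f_i g_i$ and intermediate objects $[k_i]$, the composite reads $h_2 h_1 = f_2 g_2 f_1 g_1$. The middle pair $g_2 f_1$ is free-then-generic; by the generic-free pushout property of $\simplexcategory$ (see~\ref{generic-and-free}), it can be rewritten as $f_1^\ast g_2^\ast$ with $g_2^\ast$ generic and $f_1^\ast$ free, yielding the generic-free factorisation $h_2 h_1 = (f_2 f_1^\ast)(g_2^\ast g_1)$ and hence the span $\gamma((h_2 h_1)\op)$ explicitly. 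On the $\newnabla$ side, composing $\gamma(h_1\op)$ after $\gamma(h_2\op)$ amounts, by the composition rule~\eqref{nabla-comp}, to pulling back the right leg $\un{g_2}:\un{k_2}\to\un{n_1}$ of $\gamma(h_2\op)$ along the convex left leg $\un{f_1}:\un{k_1}\rat\un{n_1}$ of $\gamma(h_1\op)$. Lemma~\ref{lem:free-gen-Delta} identifies this pullback square in $\unDelta$ with the generic-free pushout $g_2 f_1 = f_1^\ast g_2^\ast$ in $\simplexcategory$, so both computations produce the same span, confirming functoriality. The main obstacle is the degenerate case where the intermediate object is $\un 0$, where Lemma~\ref{lem:free-gen-Delta} does not apply; here we argue directly, noting that any span in $\newnabla$ with middle object $\un 0$ is forced to be the zero map, matching the fact that compositions in $\simplexcategory$ factoring through $[0]$ are exactly the constant maps, all sent to the zero map by construction.

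\textbf{Bijectivity on objects and fullness.} Bijectivity on objects is immediate from the construction. For fullness, given any span $\un n\leftarrow\un{m'}\to\un m$ in $\newnabla$ with convex left leg $j$ and right leg $e$: when $m'\geq 1$, Lemma~\ref{lem:canonical-iso-hahaha} converts $j$ to a free map $f:[m']\rat[n]$ and Joyal duality converts $e$ to a generic map $g:[m]\genmap[m']$, so that $h := f g$ satisfies $\gamma(h\op)$ equal to the given span; when $m'=0$, the given span is the zero map and is realised by any constant map $[m]\to[n]$.
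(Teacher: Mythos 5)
Correct, and essentially the paper's approach: the paper's own proof is just a citation of Lemmas~\ref{Joyal-duality} and~\ref{lem:canonical-iso-hahaha} for the isomorphisms~\eqref{isos-hahaha}, and the functoriality check you supply via generic--free factorisation and Lemma~\ref{lem:free-gen-Delta} is precisely the argument the paper carries out in the proof of the proposition immediately following. One cosmetic slip: a constant map $h\colon[m]\to[n]$ must be sent to the span $\un n\lat\un 0\to\un m$ (a morphism $\un n\to\un m$ in $\newnabla$), not $\un n\lat\un 0\to\un 0$ --- your subsequent discussion of zero-apex spans in the functoriality step treats this correctly anyway.
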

\noindent
In summary, the categories $\simplexcategory\op$
and $\newnabla$ differ only in the fact that
$\un 0\in\newnabla$ is initial {\em and terminal}, whereas
$\Hom_{\simplexcategory\op}([n],[0])$ 
contains $n+1$ maps.

\begin{proof}
  The first isomorphism is Lemma~\ref{Joyal-duality} and the second 
  is Lemma~\ref{lem:canonical-iso-hahaha}.
\end{proof}

\begin{prop}
  Precomposing with the canonical functor $\gamma: \simplexcategory\op 
  \to \newnabla$ of Lemma~\ref{DeltaNabla}
  induces a fully faithful functor
  $$
  \gamma\upperstar:\Fun(\newnabla,\Grpd) \to\Fun(\simplexcategory\op,\Grpd)
  $$ 
  whose essential image is the full subcategory
  consisting of simplicial objects with $d_\bot=d_\top:X_1\to X_0$.
\end{prop}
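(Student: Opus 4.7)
My plan is to exploit Lemma~\ref{DeltaNabla}: the functor $\gamma$ is bijective on objects and full, and its only failure of faithfulness is the collapse of the $n+1$ morphisms $[n]\to[0]$ in $\simplexcategory\op$ (one for each vertex of $[n]$) to the unique morphism $\un n\to\un 0$ in $\newnabla$, since $\un 0$ is both initial and terminal there. The three things to check are: $\gamma\upperstar$ is fully faithful; the essential image lies in the subcategory with $d_\bot=d_\top\colon X_1\to X_0$; and every such simplicial object lies in the essential image.

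For fully faithfulness of $\gamma\upperstar$, given $F,G\colon\newnabla\to\Grpd$ and a natural transformation $\alpha\colon\gamma\upperstar F\to\gamma\upperstar G$, the object bijection supplies candidate components $\alpha_{\un n}\colon F(\un n)\to G(\un n)$. Naturality with respect to any morphism $\phi$ in $\newnabla$ reduces to naturality with respect to some preimage $\tilde\phi$ under $\gamma$ (available by fullness), and different preimages give the same naturality condition because $F$ and $G$ have already identified them.

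For the essential image, one direction is immediate: if $X=\gamma\upperstar F$, then $d_\bot$ and $d_\top$ on $X_1\to X_0$ are both $F$ applied to morphisms $[1]\to[0]$ in $\simplexcategory\op$, which $\gamma$ sends to the unique morphism $\un 1\to\un 0$ in $\newnabla$, so they agree. Conversely, given $X\colon\simplexcategory\op\to\Grpd$ with $d_\bot=d_\top\colon X_1\to X_0$, the key observation is that $X$ then identifies \emph{all} $n+1$ morphisms $[n]\to[0]$ in $\simplexcategory\op$ for every $n$: any two maps $a,a'\colon[0]\to[n]$ in $\simplexcategory$ picking distinct vertices $k<k'$ factor through the injection $b\colon[1]\to[n]$ with image $\{k,k'\}$ as $a=b\circ d^1$ and $a'=b\circ d^0$, so $X(a)$ and $X(a')$ are both $X(b)\colon X_n\to X_1$ followed by $d_\top=X(d^1)$ and $d_\bot=X(d^0)$ respectively, which are equal by hypothesis. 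This collapse allows $X$ to descend along $\gamma$ to a functor $F\colon\newnabla\to\Grpd$ with $F(\un n)=X_n$ and $\gamma\upperstar F=X$.

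The main obstacle is verifying functoriality of the constructed $F$. Composition in $\newnabla$ is given by pullback along convex maps in $\unDelta$, and one must check that these composites are sent consistently. Between objects $\un n,\un k$ with $n,k\geq 1$, the bijection of Lemma~\ref{lem:free-gen-Delta} identifies pullback squares of convex maps in $\unDelta$ with generic--free commutative squares in $\simplexcategory$, so functoriality of $F$ on composable pairs in $\newnabla^{\geq 1}$ reduces to functoriality of $X$ on those squares. For composites factoring through $\un 0$, the collapse relation just established guarantees the answer is independent of the choices involved.
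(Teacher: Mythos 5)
Your proof is correct and follows essentially the same route as the paper's: fully faithfulness from $\gamma$ being bijective on objects and full, the collapse of all maps into degree $0$ to characterise the image, and Lemma~\ref{lem:free-gen-Delta} to transfer functoriality of the descended $\newnabla$-diagram from commutative generic--free squares in $\simplexcategory$. Your explicit verification that $d_\bot=d_\top\colon X_1\to X_0$ forces \emph{all} the maps $X_n\to X_0$ to coincide (by factoring any vertex inclusion $[0]\to[n]$ through $[1]$) is a detail the paper leaves implicit, and is a welcome addition.
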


\begin{proof}
  Any functor which is bijective on objects and full induces a
  fully faithful functor of the presheaf categories.  The main point 
  is to characterise the essential image.  
  Note that every simplicial object $X$ in the image will have all maps
  $X_n\to X_0$ equal, since the functor $\gamma$ sends all maps $[0]\to[n]$
  to the same image.  Given a simplicial object $X$ with all $X_n\to X_0$
  equal, we define a $\newnabla$-diagram by sending each object $\un n$ to
  $X_n$ and sending each convex correspondence $\un n' \stackrel{j}\lat 
  \un n \stackrel{f}\to \un k$
  to the composite
  $$
  \xymatrix{ X_{n'} \ar[rr]^{X(\gamma^{-1}(j))} && X_n 
  \ar[rr]^{X(\gamma^{-1}(f))} && X_k, }
  $$
  assuming $n>0$ so as to invoke the bijections \eqref{isos-hahaha} separately
  on backward convex and ordinalic maps.  For $n=0$, $\gamma^{-1}(j)$
  is not well defined in $\simplexcategory$, but taking $X$ on it {\em is}
  well defined, since we have assumed all the maps $X_n \to X_0$ coincide.
  To check functoriality of the assignment, it is enough to treat the
  situation of an ordinalic map followed by a backward convex map.  These
  compose by pullback in $\unDelta$, and by Lemma~\ref{lem:free-gen-Delta}
  these pullback squares correspond to commutative squares in
  $\simplexcategory$, in a way compatible with the assignments on arrows,
  so as to ensure that composition is respected.
  It is clear that this nabla space induces $X$ as required.
\end{proof}

\begin{blanko}{Iesq condition on functors.}\label{iesq-on-functors}
  For a functor $X:\newnabla \to \Grpd$, the image of a backward convex map
  is denoted by upperstar: if the backward convex map corresponds to
  $i: \un k \rat \un k'$ in $\unDelta$, we denote its image by $i\upperstar : X_{k'} 
  \to X_k$.  Similarly, the image of an ordinalic map, corresponding to
  $f: \un n \to \un k$ in $\unDelta$ is denoted $f\lowershriek : X_n \to X_k$.
  As observed in~\ref{iesq}, any identity-extension square in $\unDelta$
  \begin{equation}\label{eq:iesq}
    \vcenter{\xymatrix{
     \un a+\un n+\un b \ar[d]_{\id_a+f+\id_b=g}
                        &   \un n \ar@{ >->}[l]_-j\ar[d]^f  \\
     \un a+\un k+\un b  &   \un k \ar@{ >->}[l]^-i 
    }}
  \end{equation}
  is a pullback and hence a commutative square in $\newnabla$ between maps 
  from $\un a+\un n+\un b$ to $\un k$. The corresponding square of groupoids
  \begin{equation}\label{eq:BC}
 \vcenter{\xymatrix{
     X_{a+n+b} \ar[r]^-{j\upperstar }\ar[d]_{g\lowershriek}
     & X_{n} \ar[d]^{f\lowershriek} \\
      X_{a+k+b}\ar[r]_-{i\upperstar } &X_{k} .
  }}
  \end{equation}
therefore commutes by functoriality  (this is the `Beck--Chevalley condition' (BC).)

We say that $X$ satisfies the {\em iesq condition} if \eqref{eq:BC} not only commutes but is furthermore a pullback for every identity-extension square \eqref{eq:iesq}.
\end{blanko}

  If a nabla space $M: \newnabla \to \Grpd$ sends
  identity-extension squares to pullbacks then the composite
  $
  \simplexcategory\op\to \newnabla \to \Grpd
  $
  is a decomposition space.  This follows from the correspondence between
  iesq in $\unDelta$ and generic-free squares in $\simplexcategory$ 
  (Lemma~\ref{lem:iesq-genfree}).

A morphism of nabla spaces is called {\em \culf} if it is cartesian on 
(forward) ordinalic maps, i.e.~on arrows in $\un \simplexcategory\subset 
\newnabla$.  If $u: M'\Rightarrow M : \newnabla \to \Grpd$ is
a \culf natural transformation
  between functors that send identity-extension squares to pullbacks, then
  it induces a \culf functor between decomposition spaces.  Altogether:

\begin{prop}\label{prop:Nabla-to-Decomp}
	Precomposition with $\simplexcategory\op\to\newnabla$ defines
	a canonical functor
	$$
	\Fun^{\operatorname{culf}}_{\operatorname{iesq}}(\newnabla, \Grpd) \to 
	\kat{Decomp}^{\operatorname{culf}}
	$$
    from iesq (pseudo)-functors (and \culf
	(pseudo)-natural transformations) to decomposition spaces and \culf 
	functors.
\end{prop}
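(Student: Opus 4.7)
The plan is to verify in turn the three things being asserted: that precomposition with $\gamma$ sends iesq functors to decomposition spaces, sends \culf natural transformations to \culf simplicial maps, and does so functorially. All the heavy lifting has already been done in the preceding paragraphs and in Lemma~\ref{lem:iesq-genfree}, so the proof should essentially amount to assembling these observations.

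First I would define the functor on objects. Given $M \in \Fun^{\operatorname{culf}}_{\operatorname{iesq}}(\newnabla, \Grpd)$, the composite $\ds X := M \circ \gamma : \simplexcategory\op \to \Grpd$ is a simplicial groupoid. To see that it is a decomposition space, I must show that $\ds X$ sends generic-free pushouts in $\simplexcategory$ to pullbacks in $\Grpd$. By Lemma~\ref{lem:iesq-genfree}, generic-free pushout squares in $\simplexcategory$ correspond bijectively (under the isomorphisms of Lemma~\ref{DeltaNabla}) to identity-extension squares in $\unDelta$, which via $\gamma$ produce the square~\eqref{eq:BC} in $\newnabla$. The iesq hypothesis on $M$ says precisely that $M$ sends such squares to pullbacks in $\Grpd$, and this is exactly the decomposition-space axiom for $\ds X$. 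One small point to check is that the cases involving $[0]$ (where $\gamma$ is not given by a bijection on arrows) do not create difficulties; but since $\un 0$ is both initial and terminal in $\newnabla$, the relevant squares reduce to trivialities.

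Next I would define the functor on morphisms. A \culf natural transformation $u : M' \Rightarrow M$ consists of component maps $u_{\un n}: M'(\un n) \to M(\un n)$ that are cartesian on ordinalic maps, i.e.~on the image of $\gamma$ restricted to generic maps of $\simplexcategory$. Pre-composing with $\gamma$ yields a simplicial map $\ds X' \to \ds X$, and the cartesianness condition is directly the condition that the induced simplicial map is cartesian on all generic maps, i.e.\ \culf in the sense of~\ref{culf}. Both $\ds X'$ and $\ds X$ are decomposition spaces by the previous step, so the morphism does lie in $\kat{Decomp}^{\operatorname{culf}}$.

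Finally I would observe functoriality: horizontal and vertical composition of natural transformations is preserved strictly by precomposition with any fixed functor, so the assignment on morphisms respects identities and composition. The main obstacle, if any, is bookkeeping around pseudo-functoriality (dealing with coherence isomorphisms of pseudo-natural transformations and pseudo-functors), but this is routine once one works within the 2-category of groupoid-valued pseudo-functors and accepts that $\gamma$ is an ordinary functor so that precomposition is strict in the first variable.
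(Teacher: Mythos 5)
Your proposal is correct and follows essentially the same route as the paper, which establishes this proposition in the two paragraphs immediately preceding it: the object part via Lemma~\ref{lem:iesq-genfree} (identity-extension squares correspond to generic-free pushouts, so the iesq condition becomes the decomposition-space axiom), and the morphism part via the observation that cartesianness on ordinalic maps is exactly {\culf}ness after precomposition with $\gamma$. Your extra remarks on the $[0]$ edge case and on functoriality are harmless elaborations of points the paper leaves implicit.
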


%%%%%%%%%%%%%%%%%%%%%%%%%%%%%%%%%%%%%%%%%%%%%%%%%%
\section{Sesquicartesian fibrations}
%%%%%%%%%%%%%%%%%%%%%%%%%%%%%%%%%%%%%%%%%%%%%%%%%%
\label{sec:sesq}

\begin{blanko}{Functors out of $\newnabla$.}
  In view of the Proposition~\ref{prop:Nabla-to-Decomp}, we are interested in defining functors
  out of $\newnabla$.  By its construction as a category of spans, this amounts to
  defining a covariant functor on $\unDelta$ and a contravariant functor on 
  $\unDelta_{\operatorname{convex}}$ which agree on objects, and such that for every
  pullback along a convex map the Beck--Chevalley condition holds.  Better
  still, we can describe these as certain fibrations over $\unDelta$, called
  sesquicartesian fibrations, which we now introduce.
\end{blanko}

\begin{blanko}{Sesquicartesian fibrations.}
  A {\em sesquicartesian fibration} is a cocartesian fibration
  $X\to\unDelta$ that is also cartesian over
  $\unDelta_{\operatorname{convex}}$, and in addition satisfies the
  Beck--Chevalley condition: for each pullback in $\unDelta$ of a convex
  map $\tau$,
  $$
  \xymatrix { \cdot \ar[r]^{\sigma'} \ar[d]_{\tau'} \drpullback & \cdot \ar[d]^\tau \\ \cdot 
  \ar[r]_\sigma & \cdot}
  $$
  the comparison map $\sigma'\lowershriek \tau'{}\upperstar \to \tau\upperstar \sigma\lowershriek$ is an isomorphism.

  Let $\kat{Sesq}$ be the category that has as objects the sesquicartesian
  fibrations and as arrows the functors of sesquicartesian fibrations
  (required to preserve cocartesian arrows and cartesian arrows over convex
  maps).
\end{blanko}

\begin{prop}\label{SesqNabla}
  There is a canonical functor
  $$
  \kat{Sesq}\longrightarrow \Fun(\newnabla,\kat{Cat}) .
  $$
\end{prop}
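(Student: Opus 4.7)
Let $p : X \to \unDelta$ be a sesquicartesian fibration. The plan is to build the desired (pseudo)-functor $F_X : \newnabla \to \kat{Cat}$ from the two systems of lifts carried by $p$. On objects, set $F_X(\un n) := X_n$, the fibre of $p$ over $\un n$. On a morphism of $\newnabla$ presented as a span $\un n' \lat \un n \to \un k$ with left leg $j$ (convex) and right leg $f$, set $F_X(j,f) := f\lowershriek \circ j\upperstar : X_{n'} \to X_k$, where $j\upperstar$ is the cartesian-lift pullback (available since $j$ is convex and $p$ is cartesian over convex maps) and $f\lowershriek$ is the cocartesian-lift pushforward. This assignment is well-defined only up to canonical natural isomorphism, which is why $F_X$ is naturally a pseudo-functor.

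For functoriality, recall from Section~\ref{sec:nabla} that every arrow of $\newnabla$ factors uniquely as an ordinalic followed by a backward convex map, so the nontrivial composition to check is the formula~\eqref{nabla-comp}: an ordinalic $g$ followed by a backward convex $i$ equals the general span $(j,f)$ obtained from pulling $i$ back along $g$. On fibres this amounts to the canonical comparison $f\lowershriek \circ j\upperstar \isopil i\upperstar \circ g\lowershriek$ being invertible, which is \emph{exactly} the Beck--Chevalley condition built into the definition of sesquicartesian fibration. The general case of composing two arbitrary spans in $\newnabla$ reduces to this by factoring each as backward-convex-then-ordinalic and re-associating. The unit axiom holds because identities can be lifted identically both covariantly and contravariantly.

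For naturality in $X$: a morphism $X \to X'$ in $\kat{Sesq}$ is a functor over $\unDelta$ preserving cocartesian arrows and also cartesian arrows over convex maps. Restricted to fibres, it therefore commutes with both $f\lowershriek$ and $j\upperstar$ up to canonical isomorphism, and these assemble into a pseudo-natural transformation $F_X \Rightarrow F_{X'}$. Identities go to identities and composites to composites, yielding the desired functor $\kat{Sesq} \to \Fun(\newnabla, \kat{Cat})$.

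The main obstacle is coherence bookkeeping for the pseudo-functor axioms. Associativity for triply-composed spans reduces, via the unique generic/free factorisation established in Lemma~\ref{lem:free-gen-Delta}, to pasting of pullback squares along convex maps in $\unDelta$, and the resulting comparison $2$-cells agree by uniqueness of (co)cartesian lifts. In effect, what needs to be invoked is the standard principle that a cocartesian fibration which is also cartesian over a subclass closed under pullbacks (here, the convex maps, by Lemma~\ref{convexpbk}) and satisfies Beck--Chevalley gives rise to a pseudo-functor from the corresponding span-type bicategory; here that bicategory is exactly $\newnabla$, with its ordinalic/backward-convex factorisation system providing the requisite rigidity.
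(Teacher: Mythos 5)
Your proposal is correct and follows essentially the same route as the paper: define the pseudo-functor on objects by taking fibres, on convex correspondences by $f\lowershriek\circ j\upperstar$, use the Beck--Chevalley isomorphisms as the coherence data for composition (the only nontrivial case being formula~\eqref{nabla-comp}), and obtain pseudo-naturality of morphisms from preservation of (co)cartesian arrows. The only slip is terminological: the canonical factorisation in $\newnabla$ is backward-convex followed by ordinalic, not the reverse, though you correctly identify the ordinalic-then-backward-convex composite as the one requiring the pullback and BC.
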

\noindent
Recall that $\Fun$ denotes the category of pseudo-functors 
and pseudo-natural transformations.

\begin{proof}
  Given a sesquicartesian fibration $p: X \to \unDelta$, 
  we can define a pseudo-functor
  $P:\newnabla\to \kat{Cat}$ as follows. On
  objects, send $\un n$ to the category $X_n$. 
  Send 
  a convex correspondence $\un n' \stackrel{j}{\lat} \un n
  \stackrel{f}\to \un k$ to the composite functor $X_{n'}
  \stackrel{j\upperstar} \to X_n \stackrel{f\lowershriek}
  \to X_k$.  Individually, the covariant and contravariant
  reindexing functors compose up to coherent isomorphisms because
  that's how cocartesian and cartesian fibrations work.  The
  Beck--Chevalley isomorphisms provide the coherence isomorphisms for
  general composition.

  On arrows: given a morphism $c: p'\to p$ of sesquicartesian 
  fibrations, assign a pseudo-natural transformation $u:P'\Rightarrow 
  P$: its component on $\un n$ is $c_n:{X}_n' \to 
  {X}_n$, its pseudo-naturality square on a backward convex 
  map $\un n' \stackrel{j}{\lat} \un n$ is given (at an object $x'\in 
  {X}'_{n'}$) by the isomorphisms $c(j\upperstar(x')) \simeq
  j\upperstar (c(x'))$ expressing that $c$ preserves cartesian arrows
  (but not chosen cartesian).  Similarly with the forward maps and 
  cocartesian lifts.  Again BC is invoked to ensure these are really 
  pseudo-natural.
\end{proof}

\begin{blanko}{Remark.}
  From work of Hermida~\cite{Hermida:repr-mult} and
  Dawson--Par\'e--Pronk~\cite{Dawson-Pare-Pronk:MR2116323}, it can be 
  expected that this functor is actually an equivalence, but we do not
  need this result and do not pursue the question further here.
\end{blanko}

\begin{blanko}{The iesq property.}\label{iesq-property}
  A sesquicartesian fibration $p:X\to\unDelta$ is said to have 
  the {\em iesq property} if for every identity-extension square 
  $$\xymatrix{
    \un a+\un n+\un b \ar[d]_{\id_a+f+\id_b=g} &
	\un n \ar@{ >->}[l]_-j\ar[d]^f & \\
    \un a+\un k+\un b  &   \un k \ar@{ >->}[l]^-i 
  }$$
  the diagram of categories
   $$\xymatrix{
     X_{a+n+b} \ar[r]^-{j\upperstar }\ar[d]_{g\lowershriek}
     & X_{n} \ar[d]^{f\lowershriek} \\
     X_{a+k+b}\ar[r]_-{i\upperstar } & X_{k}
  }$$
  not only commutes up to natural isomorphism (the BC condition),
  but is furthermore a homotopy pullback of categories (i.e.~it is equivalent to a 
  iso-comma square).

  Let $\kat{IesqSesq}$ be the category whose objects are the
  sesquicartesian fibrations $p:X\to\unDelta$ having the
  iesq property,
  and whose arrows are functors over $\unDelta$
  $$\xymatrix@C=2ex{
  X \ar[rd]_p \ar[rr]^c && Y \ar[ld]^q \\
  & \unDelta &
}$$
  that preserve cocartesian arrows and cartesian arrows (over convex maps),
  and satisfying the condition that for every arrow $f : \un n \to \un k$
  in $\unDelta$, the following square is a homotopy pullback:
  \begin{equation}\label{eq:subm}\xymatrix{ X_n \drpullback
  \ar[r]^{f\lowershriek} \ar[d]_c & X_{k} \ar[d]^{c} \\
  Y_n \ar[r]_{f\lowershriek} & Y_{k} .
  }\end{equation}
This condition on arrows $c:X\to Y$ is equivalent to saying that the associated 
  (pseudo)-natural transformation of pseudo-functors $\unDelta\to\kat{Cat}$ is homotopy
  cartesian, i.e.~all its (pseudo)-naturality squares are homotopy
  pullbacks.
\end{blanko}

\begin{prop}\label{prop:iesqsesqiNabla}
  The functor of Proposition \ref{SesqNabla} restricts to a functor
  $$
  \kat{IesqSesq} \longrightarrow 
  \Fun^{\operatorname{culf}}_{\operatorname{iesq}}(\newnabla,\kat{Cat})
  $$
\end{prop}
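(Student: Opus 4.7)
The plan is that the proposition follows by direct translation of the object-level and morphism-level conditions under the correspondence already established in Proposition~\ref{SesqNabla}, together with a check of compatibility with the pseudofunctor coherence data.

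First, on objects: given $p : X \to \unDelta$ in $\kat{IesqSesq}$, I would take the pseudofunctor $P : \newnabla \to \kat{Cat}$ produced by Proposition~\ref{SesqNabla}, which sends $\un n \mapsto X_n$, backward convex maps to the cartesian reindexings $j\upperstar$, and ordinalic maps to the cocartesian reindexings $f\lowershriek$. An iesq~\eqref{eq:iesq} in $\unDelta$ is, under the span construction of $\newnabla$, a commutative square between maps $\un a+\un n+\un b \to \un k$. The square of categories obtained by applying $P$ is precisely
$$\xymatrix{
X_{a+n+b}\ar[r]^-{j\upperstar}\ar[d]_{g\lowershriek}
&X_{n}\ar[d]^{f\lowershriek}\\
X_{a+k+b}\ar[r]_-{i\upperstar}&X_{k},
}$$
which by~\ref{iesq-property} is a homotopy pullback exactly when the fibration has the iesq property. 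Hence $P$ lies in $\Fun_{\operatorname{iesq}}(\newnabla,\kat{Cat})$.

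Second, on morphisms: given an arrow $c : X \to Y$ in $\kat{IesqSesq}$, Proposition~\ref{SesqNabla} gives a pseudo-natural transformation $u : P' \Rightarrow P$ whose component at $\un n$ is the fibrewise functor $c_n : X_n \to Y_n$. I would then observe that, for an ordinalic map $\un n \to \un k$ coming from $f : \un n \to \un k$ in $\unDelta$, the associated pseudo-naturality square of $u$ is literally the square~\eqref{eq:subm}. The defining condition on morphisms of $\kat{IesqSesq}$ says precisely that this square is a homotopy pullback, which is the \culf condition on $u$ at ordinalic maps. This is all that is required for \culfness of $u$ as a morphism in $\newnabla$.

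The one step that requires mild care, and which I expect to be the main obstacle, is checking that the Beck--Chevalley coherence isomorphisms built into $P$ and $P'$ by Proposition~\ref{SesqNabla} interact correctly with the pseudo-naturality squares of $u$. Concretely, one must verify that the comparison $f\lowershriek \circ c_n \Rightarrow c_k \circ f\lowershriek$ that witnesses pseudo-naturality on an ordinalic map is, up to the chosen cocartesian lifts, the canonical map making~\eqref{eq:subm} commute, so that homotopy pullback of~\eqref{eq:subm} really translates into cartesianness of the naturality square. Once this compatibility is spelled out, the two points combine to produce the asserted restriction of the functor $\kat{Sesq} \to \Fun(\newnabla,\kat{Cat})$ to $\kat{IesqSesq} \to \Fun^{\operatorname{culf}}_{\operatorname{iesq}}(\newnabla,\kat{Cat})$, without any further computation.
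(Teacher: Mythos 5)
Your proposal is correct and matches the paper's (essentially tautological) argument: the paper offers no explicit proof, because the iesq property of \ref{iesq-property} and the pullback condition \eqref{eq:subm} on morphisms of $\kat{IesqSesq}$ are set up to translate verbatim, under the correspondence of Proposition~\ref{SesqNabla}, into the iesq and \culf conditions defining $\Fun^{\operatorname{culf}}_{\operatorname{iesq}}(\newnabla,\kat{Cat})$. The coherence issue you flag is already discharged in the proof of Proposition~\ref{SesqNabla} itself (where Beck--Chevalley is invoked to establish pseudo-naturality), so no further work is needed.
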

\noindent
Here $\Fun^{\operatorname{culf}}_{\operatorname{iesq}}(\newnabla,\kat{Cat})$ is
the subcategory of
$\Fun(\newnabla,\kat{Cat})$ whose objects are those
$X:\newnabla\to\kat{Cat}$ such that for every identity extension square the
corresponding Beck--Chevalley square is a homotopy pullback in \kat{Cat},
and whose morphisms are those pseudo-natural
transformations $X\to Y$ that are homotopy cartesian on (forward) ordinalic maps,
i.e.~on arrows in $\unDelta\subset\newnabla$.  Compare 
\ref{iesq-on-functors} for corresponding notions in
$\Fun(\newnabla,\Grpd)$.

%STRICT% \begin{blanko}{Strictly iesq sesquicartesian fibrations.}\label{sesquisplit}
%STRICT%   A sesquicartesian fibration is {\em split} when there are specified
%STRICT%   functorial cocartesian lifts for all maps and specified functorial
%STRICT%   cartesian lifts for convex maps, and such that the Beck--Chevalley
%STRICT%   isomorphisms are strict identities.  Strict morphisms of split
%STRICT%   sesquicartesian fibrations are morphisms that preserve the specified
%STRICT%   lifts.  A split sesquicartesian fibration is {\em strictly iesq} when the
%STRICT%   strictly commutative Beck--Chevalley squares are both homotopy pullbacks
%STRICT%   and strict pullbacks.  Let $\kat{IesqSesq}^{\operatorname{strict}}$ be
%STRICT%   the category of strictly iesq sequicartesian fibrations and the
%STRICT%   strict morphisms for which the square~\eqref{eq:subm} is both a strict 
%STRICT%   pullback and a homotopy pullback.
%STRICT% \end{blanko}
%STRICT% 
%STRICT% \begin{prop}
%STRICT%   The functor of Proposition \ref{SesqNabla} restricts to a functor
%STRICT%   $$
%STRICT%   \kat{IesqSesq}^{\operatorname{strict}} \longrightarrow 
%STRICT%   \Fun^{\operatorname{culf}}_{\operatorname{iesq}}(\newnabla,\kat{Cat})
%STRICT%   $$
%STRICT% \end{prop}
%STRICT% Here
%STRICT% $\Fun^{\operatorname{culf}}_{\operatorname{iesq}}(\newnabla,\kat{Cat})$ is
%STRICT% the subcategory of
%STRICT% $\Hom^{\operatorname{culf}}_{\operatorname{iesq}}(\newnabla,\kat{Cat})$
%STRICT% consisting of the strict functors and strict natural transformations.

Taking maximal subgroupoids to get a functor
$\Fun^{\operatorname{culf}}_{\operatorname{iesq}}(\newnabla,\kat{Cat})\to
\Fun^{\operatorname{culf}}_{\operatorname{iesq}}(\newnabla,\Grpd)$, and
combining Propositions~\ref{prop:iesqsesqiNabla} and
\ref{prop:Nabla-to-Decomp}, we obtain:

\begin{theorem}\label{thm:iesqsesqui->decomp}
  The constructions so far define a functor
  $$
  \kat{IesqSesq}
%STRICT%   ^{\operatorname{strict}} 
  \to 
  \kat{Decomp}^{\operatorname{culf}} .
  $$
\end{theorem}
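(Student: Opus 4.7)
The plan is to realize the claimed functor as the composition of Propositions~\ref{prop:iesqsesqiNabla} and~\ref{prop:Nabla-to-Decomp}, intermediated by the levelwise maximal-subgroupoid functor
$$
\Fun^{\operatorname{culf}}_{\operatorname{iesq}}(\newnabla,\kat{Cat}) \longrightarrow \Fun^{\operatorname{culf}}_{\operatorname{iesq}}(\newnabla,\Grpd), \qquad X \longmapsto (-)^{\iso}\circ X .
$$
This factorisation is already outlined in the paragraph preceding the theorem; what remains is to verify that the middle step is well-defined, i.e.~that post-composition with $(-)^{\iso}: \kat{Cat}\to\Grpd$ preserves both the iesq property on objects and the culfness (homotopy cartesianness on ordinalic maps) on morphisms.

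The crucial input is that $(-)^{\iso}$, being right adjoint to the inclusion $\Grpd\hookrightarrow\kat{Cat}$, preserves $1$-categorical limits, and moreover sends iso-comma squares of categories (i.e.~$2$-pullbacks, equivalently homotopy pullbacks) to homotopy pullbacks of groupoids. Concretely, given an iso-comma square with corners $P, A, B, C$, a morphism in $P$ is invertible precisely when its $A$- and $B$-components are, and the comparison from $P^{\iso}$ to the iso-comma of $A^{\iso}$ and $B^{\iso}$ over $C^{\iso}$ is an equivalence because the invertibility of the mediating $2$-cell in $C$ is automatic once one restricts to $C^{\iso}$. Consequently, if $X:\newnabla\to\kat{Cat}$ sends each identity-extension square in $\unDelta$ to a homotopy pullback of categories (the iesq property of \ref{iesq-property}), then $(-)^{\iso}\circ X$ sends them to homotopy pullbacks of groupoids; likewise, if a morphism $X\to Y$ has homotopy-pullback naturality squares over all ordinalic maps, then so does its image under $(-)^{\iso}$.

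Functoriality of the overall assignment then follows from functoriality of each of the three factors and from the $2$-functoriality of $(-)^{\iso}$ (allowing us to process pseudo-natural transformations). The main point, and essentially the only obstacle beyond diagram-chasing, is the $2$-categorical preservation property of $(-)^{\iso}$ recalled above; this is standard and requires no new work.
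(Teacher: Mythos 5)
Your proposal is correct and follows exactly the route the paper takes: the theorem is obtained by composing the functor of Proposition~\ref{prop:iesqsesqiNabla} with levelwise maximal subgroupoid and then the functor of Proposition~\ref{prop:Nabla-to-Decomp}. Your added verification that $(-)^{\iso}$ carries iso-comma squares of categories to homotopy pullbacks of groupoids is the (standard) point the paper leaves implicit, and your argument for it is sound.
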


\begin{blanko}{Decomposition categories.}
  The notion of decomposition space admits an obvious variation: that of
   {\em decomposition category} given by a (pseudo)-functor $X : 
  \simplexcategory\op\to\kat{Cat}$ such that the generic-free squares 
  are homotopy pullbacks.  It is clear that 
  iesq sesquicartesian fibrations define actually decomposition 
  categories---it is sort of artificial that we took groupoid interior
  as the last step to force the result to be a decomposition space instead 
  of a decomposition category (the motivation being of course to take
  homotopy cardinality and get coalgebras).  While for $\newnabla$-diagrams
  and $\simplexcategory\op$-diagrams it is obvious how to take groupoid 
  interior, corresponding to taking the left fibration associated to a 
  cocartesian fibration, this is not so for sesquicartesian fibrations, 
  which have genuinely categorical fibres.

  Decomposition categories arose also in our work \cite{GKT:MI} where the
  universal decomposition space of M\"obius intervals is in fact
  constructed as a decomposition $\infty$-category.  We leave for another
  occasion a more systematic study of decomposition categories.
\end{blanko}

\begin{blanko}{Example: monoids.}
  A monoid viewed as a  
  monoidal functor $X: (\unDelta, +,0) \to (\Grpd,\times,1)$
  defines a iesq sesquicartesian fibration.
  The contravariant functoriality on the convex maps is given as follows.  
  The cartesian lift of a convex map $\un a + \un n + \un b \lat \un n$ is simply the 
  projection
  $$
  X_{a+n+b} \simeq X_a \times X_n \times X_b \longrightarrow X_n ,
  $$
  where the first equivalence expresses that $X$ is monoidal.
  For any identity-extension square~\eqref{eq:iesq},
  it is clear that the corresponding diagram
  $$\xymatrix{
     X_{a+n+b}\drpullback \ar[r]^-{j\upperstar }\ar[d]_{g\lowershriek}
     & X_{n} \ar[d]^{f\lowershriek} \\
      X_{a+k+b}\ar[r]_-{i\upperstar } &X_{k}
  }$$
  is a pullback, since the upperstar functors are just projections.
  The associated decomposition space is the classifying space of the monoid.
\end{blanko}

%%%%%%%%%%%%%%%%%%%%%%%%%%%%%%%%%%%%%%%%%%%%%%%%%%
\section{From restriction species to iesq-sesqui}
%%%%%%%%%%%%%%%%%%%%%%%%%%%%%%%%%%%%%%%%%%%%%%%%%%
\label{sec:RSp->iesq}

In order to construct nabla spaces satisfying the iesq property, we can
construct sesquicartesian fibrations satisfying iesq, and then take maximal
sub-groupoid.

All our examples originate as the left leg of a two-sided fibration,
as we proceed to explain.

\begin{blanko}{Two-sided fibrations.}
  Classically (the notion is due to Street~\cite{Street:LNM420}), 
  a {\em two-sided fibration} is a span of functors
  $$
  \xymatrix{X \ar[d]_p\ar[r]^q & T \\
  S&
  }$$
  such that
 
  ---
  $p$ is a cocartesian fibration whose
  $p$-cocartesian arrows are precisely the $q$-vertical arrows, 
 
  ---
  $q$ is a cartesian fibration whose
  $q$-cartesian arrows are precisely the $p$-vertical arrows,
  
  --- for $x\in X$, an arrow $f: px\to s$ in $S$ and $g:t \to qx$ in $T$, the
  canonical map 
  $f\lowershriek g\upperstar  x \to g\upperstar f \lowershriek x$ is an 
  isomorphism.
  
  In the setting of $\infty$-categories, Lurie~\cite[\S2.4.7]{Lurie:HA}
  (using the terminology `bifibration') characterises two-sided fibrations as
  functors $X \to S \times T$ subject to a certain horn-filling condition,
  which among other technical advantages makes it clear that the notion is
  stable under pullback along functors $S' \times T' \to S \times T$.
  The classical axioms are derived from the horn-filling condition.
\end{blanko}

\begin{blanko}{Comma categories.} 
  $\Ar(\CC) \xrightarrow{(\mathrm{codom}, \mathrm{dom})} \CC\times\CC$
  is a two-sided fibration.
  Given categories and functors 
  $$\xymatrix{
  & S \ar[d]^G \\
  T \ar[r]_F & I
  }$$
  the {\em comma category} $T \comma S$
  is the category whose objects are triples $(t,s,\phi)$, where $t\in T$, $s\in 
  S$,
  and $\phi:Ft\to Gs$.  More formally it is defined as 
  the pullback two-sided fibration
  $$\xymatrix{
    T \comma S \drpullback  \ar[r]\ar[d] & \Ar(I) 
    \ar[d]^{(\mathrm{codom},\mathrm{dom})} \\
     S \times T \ar[r]_{G\times F} & I \times I .
  }$$ 
  Note that the factors come in the opposite order: $T\comma S \to S$ is the
  cocartesian fibration, and $T\comma S \to T$ the cartesian fibration.
  The left leg cocartesian fibration comes with a canonical splitting.
  The two-sided fibration sits in a comma square which we depict like this:
  $$\xymatrix{
  T\comma S \ar[r] \ar[d] \ar@{}[rd]|{\Leftarrow} & T \ar[d] \\
  S \ar[r] & I
  }$$
\end{blanko}

\begin{lemma}\label{sesquilemma}
  In a diagram
  $$
  \xymatrix{X \times_T R \ar@/_1.5pc/[dd]_f \drpullback \ar[r] \ar[d] & R \ar[d]^w\\
  X \ar[d]^p\ar[r]_q & T \\
  \un\simplexcategory&
  }$$
  where 
  
  --- $(p,q):X \to \un\simplexcategory \times T$ is a two-sided fibration;
  
---  $p:X \to \un\simplexcategory$ is a iesq sesquicartesian fibration; and
  
  ---  $w:R \to T$ is a cartesian fibration; 
  
  \noindent we have
  \begin{enumerate}
  	\item $f$ is a iesq sesquicartesian fibration. 
  
%STRICT%   	\item if the sequicartesian fibration $X \to \un\simplexcategory$ is 
%STRICT% 	strictly iesq (in the sense of \ref{sesquisplit}) and if the cartesian 
%STRICT% 	fibration $R \to T$ is split, then the sesquicartesian fibration $f$ is 
%STRICT% 	again strictly iesq.  NOT YET VERIFIED.
  
        \item the map $X \times_T R \to X$
          is a morphism of iesq sesquicartesian fibrations from $f$ to $p$ (in the sense of 
  \ref{iesq-property}).%is a submersion.
  \end{enumerate}

\end{lemma}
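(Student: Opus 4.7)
The strategy is to build explicit $f$-cocartesian and $f$-cartesian lifts from $p$- and $w$-lifts, then reduce the Beck--Chevalley and iesq conditions for $f$ to those for $p$ by tracking $q$-images in $T$. A useful reformulation throughout: $\pi : X \times_T R \to X$ is a cartesian fibration, being the pullback of $w$ along $q$, and the $f$-lifts arise by combining its cartesian lifts with lifts from $p$.

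First I would construct the lifts. Given $(x,r) \in X \times_T R$ and any $\alpha : p(x) \to \un m$ in $\un\simplexcategory$, take the $p$-cocartesian lift $\bar\alpha : x \to \alpha_! x$; the two-sided fibration axioms force $p$-cocartesian arrows to be $q$-vertical, so $q(\alpha_! x) = q(x) = w(r)$, and the pair $(\bar\alpha, \id_r)$ is a well-defined morphism in $X \times_T R$, which one verifies is $f$-cocartesian by the universal property of the pullback. For a convex map $i: \un k \rat p(x)$, take the $p$-cartesian lift $\bar i : i^* x \to x$, set $\delta := q(\bar i)$, and use $w$ cartesian to obtain a $w$-cartesian lift $\tilde\delta : \delta^* r \to r$; then $(\bar i, \tilde\delta)$ is $f$-cartesian over $i$ by a direct check combining the two universal properties.

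For Beck--Chevalley over a convex pullback square in $\un\simplexcategory$ (whose other leg is also convex, by Lemma~\ref{convexpbk}), compare the two composites of (co)cartesian lifts applied to $(x, r)$. The $X$-component is handled by the Beck--Chevalley isomorphism $\phi$ for $p$. The $R$-component reduces to comparing two $w$-cartesian lifts of $r$, along arrows $\alpha$ and $\gamma$ in $T$ arising as the $q$-images of the two $p$-cartesian lifts. Applying $q$ to the coherence equation defining $\phi$ and using that $q$ sends cocartesian arrows to identities yields $\alpha = \gamma \circ q(\phi)$ in $T$; since $\phi$ is an iso, so is $q(\phi)$, and therefore $\alpha^* r \simeq q(\phi)^* \gamma^* r \simeq \gamma^* r$ as desired. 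For the iesq property, given matching data $(y, r_y)$ and $(z, r_z)$ with $f_!(y, r_y) \simeq i^*(z, r_z)$, iesq for $p$ supplies a unique (up to iso) $x \in X_{a+n+b}$ with $j^* x \simeq y$ and $g_! x \simeq z$. Setting $r_x := r_z$ produces an element of $X \times_T R$ since $w(r_z) = q(z) = q(g_! x) = q(x)$, and the remaining matching on the $R$-side, $q(\bar j)^* r_z \simeq r_y$, follows by applying $q$ to the iesq commuting square in $X$ and killing the two cocartesian edges, combined with the matching hypothesis $r_y \simeq q(\bar i)^* r_z$.

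Part (2) is then routine: preservation of cocartesian and cartesian lifts by $\pi$ is built into the construction, and the square \eqref{eq:subm} is a strict pullback for every arrow $\alpha$ in $\un\simplexcategory$, since $f$-cocartesian lifts leave the $R$-coordinate untouched---given $x' \in X_n$ and $(\alpha_! x', s) \in (X \times_T R)_k$ matching under $\alpha_!$, the unique preimage $(x', s)$ lies in the pullback because $w(s) = q(\alpha_! x') = q(x')$. The main obstacle throughout is the careful bookkeeping of $q$-images and the interplay between the BC isomorphism for $p$ and the cartesian lifts for $w$, but the calculations crystallise around the single identity $q(\mathrm{cocart}) = \id$ supplied by the two-sided fibration axioms.
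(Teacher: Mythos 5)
Your proof is correct and follows essentially the same route as the paper's: the $f$-cocartesian lifts come from pullback of the two-sided fibration (equivalently, pairing a $p$-cocartesian, hence $q$-vertical, arrow with an identity in $R$), the $f$-cartesian lift over a convex map is the pair of a $p$-cartesian lift $\ell$ and a $w$-cartesian lift of $q(\ell)$, and the iesq property for $f$ is reduced to that for $p$ by carrying the $R$-coordinate along. The only difference is one of detail: where the paper simply says the Beck--Chevalley/iesq square for $f$ is obtained by applying $-\times_T R$ to the one for $p$ and that the squares in part (2) are clear, you spell out the element-level bookkeeping of $q$-images; this is a more explicit rendering of the same argument, not a different one.
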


\begin{proof}
  (1) $f$ is a cocartesian fibration because it is the left leg of the pullback
  two-sided fibration of $X \to \unDelta\times T$ along $\unDelta
  \times R \to \unDelta \times T$.
%STRICT%   (1s): This takes care of the splitting too.
  The $f$-cartesian lift of a given convex arrow has components $(\ell, c)$
  where $\ell$ is a $p$-cartesian lift to $X$, and $c$ is a $w$-cartesian
  lift of $q(\ell)$.
%STRICT%   (2s): the components of the cartesian splitting come from the cartesian
%STRICT%   splittings of $p$ and $w$.  (The cocartesian splittings were provided in (1s).)
  Given the pullback square 
  $$\xymatrix{
     X_{a+n+b}\drpullback \ar[r]^-{j\upperstar }\ar[d]_{\sigma'\lowershriek}
     & X_{n} \ar[d]^{\sigma\lowershriek} \\
      X_{a+k+b}\ar[r]_-{i\upperstar } &X_{k}
  }$$
  expressing that $X \to \unDelta$ has the iesq property, the corresponding square 
  for  $X\times_T R \to \unDelta$ is simply obtained applying
  $- \times_T R$ to it, hence is again a pullback, so $f$ has the 
  iesq property.

  %(2):
  (2) By construction $X \times_T R \to X$ preserves cocartesian arrows and cartesian arrows over convex maps, so it is indeed a morphism of sesquicartesian fibrations.
  For each arrow $\sigma: n \to k$ in $\unDelta$, the square required 
  to be a pullback is
  $$\xymatrix{
    X_n \times_T R \drpullback \ar[r]^{\sigma\lowershriek \times_T R} \ar[d] & X_{k}\times_T R 
	\ar[d] \\
	X_n \ar[r]_{\sigma\lowershriek} & X_{k}
  }$$
  which is clear.
\end{proof}

\begin{blanko}{Restriction species and directed restriction species.}
  Recall that $\I$ denotes the category of finite sets and injections,
  and that a restriction species is a functor $R: \I\op\to\Grpd$,
  or equivalently, a 
%STRICT%   split 
  right fibration $\R\to\I$.  Recall also that $\C$ denotes the
  category of finite posets and convex maps, and that
  a {\em directed restriction
  species} is a functor $R : \C\op\to\Grpd$, or equivalently, a
%STRICT%   split 
  right fibration $\R\to\C$.
  
  We are going to establish that every ordinary restriction species
  and every directed restriction species defines naturally a
%STRICT%   strictly 
  iesq
  sesquicartesian fibration.  We will do the proofs for directed restriction
  species, and then exploit the fact that ordinary restriction species
  are a special kind of directed restriction species to deduce the 
  results also for ordinary restriction species.
\end{blanko}

\begin{prop}\label{CDelta}
  The projection $\C \comma \un\simplexcategory \to \un\simplexcategory$ 
  is a
%STRICT%   strictly
  iesq sesquicartesian fibration.
\end{prop}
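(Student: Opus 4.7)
The plan is to unpack the comma category $\C \comma \un\simplexcategory$ explicitly: its objects are layered posets $(P, \un n, \phi : P \to \un n)$ and its morphisms are commutative squares with a convex map $P \to P'$ on the left and an ordinal map $\un n \to \un{n'}$ on the right. I would then verify in order the four ingredients of being an iesq sesquicartesian fibration over $\un\simplexcategory$: cocartesianness, cartesianness over convex maps, the Beck--Chevalley condition, and the iesq condition.

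Cocartesianness is automatic from the comma structure: the lift of $\sigma : \un n \to \un{n'}$ at $(P, \un n, \phi)$ is $(P, \un{n'}, \sigma\phi)$ with the identity on $P$. For cartesianness over a convex $\tau : \un n \rat \un{n'}$, I would take the preimage $P^* := \phi^{-1}(\tau(\un n)) \subset P$, which is a convex subposet by Lemma~\ref{pbkconv}, with induced layering $\phi|_{P^*}$; the universal property is immediate since any morphism $(Q, \un m, \psi) \to (P, \un{n'}, \phi)$ whose ordinal component factors through $\tau$ has its poset component land in $P^*$ by definition of preimage. The Beck--Chevalley condition for a pullback of a convex map against an arbitrary ordinal map then reduces to comparing $\psi^{-1}(\tau'(\un A))$ with $\psi^{-1}\sigma^{-1}(\tau(\un B))$, which coincide precisely by the pullback property in $\un\simplexcategory$.

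The essential step is the iesq condition. For an identity-extension square with $g = \id_a + f + \id_b$, I need to show the resulting square of fibre categories
$$\xymatrix{
X_{a+n+b} \ar[r]^-{j\upperstar}\ar[d]_{g\lowershriek}
& X_{n} \ar[d]^{f\lowershriek} \\
X_{a+k+b}\ar[r]_-{i\upperstar} & X_{k}
}$$
is a homotopy pullback of categories. I would exhibit a pseudo-inverse to the evident comparison functor: given an object $((Q, \un n, \psi), (P, \un a + \un k + \un b, \chi), \alpha)$ of the iso-comma, where $\alpha : (Q, \un k, f\psi) \isopil (P|_k, \un k, \chi|_{P|_k})$ with $P|_k := \chi^{-1}(\un k)$, form a layered poset $P^\sharp$ obtained from $P$ by replacing the convex subposet $P|_k$ with $Q$ along $\alpha^{-1}$, with layering $\phi^\sharp : P^\sharp \to \un a + \un n + \un b$ given by $\psi$ on $Q$ and by $\chi$ on the complement. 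This is well-defined as a poset precisely because $P|_k \subset P$ is convex: cross-relations between the middle and the outer layers are inherited unambiguously from $P$ via $\alpha$. Functoriality in morphisms of the iso-comma, and the fact that this construction inverts the comparison up to canonical isomorphism, then gives the required equivalence.

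The main obstacle will be the gluing argument in the iesq step, where one must check that the resulting poset structure on $P^\sharp$ is canonical and that the construction is pseudo-functorial in morphisms of the iso-comma category. Convexity of the middle inclusion is the decisive ingredient here and should be invoked explicitly, while the remaining steps reduce to routine unwinding of definitions together with the pullback-stability of convex maps.
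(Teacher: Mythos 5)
Your proposal is correct and follows the same overall skeleton as the paper's proof: unpack the comma category, observe that cocartesian lifts are the tautological ones (identity on the poset), obtain cartesian lifts over convex maps from pullback-stability of convex maps (Lemma~\ref{pbkconv}), check Beck--Chevalley by comparing preimages, and then isolate the iesq square as the real content. The one place where you genuinely diverge is in how you establish that the iesq square is a \emph{homotopy} pullback. The paper first observes that the vertical (lowershriek) functors are iso-fibrations, so that it suffices to compute the \emph{strict} pullback; an object of the strict pullback is then a pair $(P'\to\un k',\,P\to\un n)$ with $P=i\upperstar P'$ an actual subposet, and one simply refines the layering of $P'$ on the middle fibre. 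You instead work with the iso-comma directly and build a pseudo-inverse, transporting the middle layer along the comparison isomorphism $\alpha$ to form $P^\sharp$. Both work; the paper's reduction saves the bookkeeping with $\alpha$ and makes the object-level bijection and the comparison of (auto)morphisms cleaner, while your version is more robust in that it never invokes the iso-fibration shortcut. One small correction of emphasis: the well-definedness of the poset $P^\sharp$ is not really where convexity enters --- $P^\sharp$ is just $P$ with structure transported along the isomorphism that is the identity outside the middle fibre and $\alpha$ on it, so nothing can go wrong there. The decisive observation (which the paper states explicitly) is rather that the covariant functoriality does not change the underlying poset, so the total poset is already determined by the $\un k'$-layered datum and only the layering of the middle fibre needs to be refined; convexity of that fibre is automatic for any layering. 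You should also make sure the morphism-level check compares all morphisms of the fibre categories over $\un n'$ with those of the iso-comma (not only automorphisms), since the fibres here are categories of convex maps over $\un n'$, not groupoids.
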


\begin{proof}
  The comma category is taken over $\kat{Poset}$.  The objects of $\C\comma
  \un\simplexcategory$ are poset maps $P \to \un k$, and the arrows are
  squares in $\kat{Poset}$
  $$\xymatrix{
     Q \ar[r]\ar[d] & P \ar[d] \\
     \un n \ar[r] & \un k
  }$$
  with $Q\to P$ a convex map and $\un n \to \un k$ a monotone
  map.  Just from being a comma category projection, $\C \comma
  \un\simplexcategory \to \un\simplexcategory$ is a (split)
  cocartesian
  fibration.  The chosen cocartesian arrows are squares in $\kat{Poset}$ of
  the form
  $$\xymatrix{
     P \ar[r]^=\ar[d] & P \ar[d] \\
     \un n \ar[r] & \un k  .
  }$$
  Over $\un\simplexcategory_{\text{convex}}$ it is also a
  (split) cartesian fibration, as follows readily from Lemma \ref{pbkconv} on pullback stability of 
  convex maps in $\kat{Poset}$: the 
  cartesian arrows over a convex map are squares in $\kat{Poset}$ of the form
  $$\xymatrix{
     P\drpullback \ar@{ >->}[r] \ar[d]_{i\upperstar \beta} & P' \ar[d]^\beta \\
     \un k \ar@{ >->}[r]_i & \un k'  .
  }$$
  The chosen cartesian arrows are the squares in which the map $P \to P'$
  is an actual inclusion.

  Finally for the iesq property,
  we need to check that given
      $$\xymatrix{
	  \un a+\un n+\un b=\un n' 
	    \ar[d]_{\id_a+f+\id_b}^{=g} &   \un n \ar@{ >->}[l]_-j\ar[d]^f & \\
	\un a+\un k+\un b =\un k' &   \un k \ar@{ >->}[l]^-i 
	  }$$
  the resulting strictly commutative square
  $$\xymatrix{
     \C_{/\un n'}\drpullback \ar[r]^-{j\upperstar }\ar[d]_{g\lowershriek}
     & \C_{/\un n} \ar[d]^{f\lowershriek} \\
      \C_{/\un k'}\ar[r]_-{i\upperstar } &\C_{/\un k}
  }$$
  is a pullback.  To this end, note first that lowershriek
  functors between slices are cartesian fibrations, so it is enough to 
  show that this square is a strict pullback.  We first compute the strict
  pullback at the level of objects.
  A pair $(P'\xrightarrow\beta\un k',P\xrightarrow\alpha\un n)$ lies in 
  the pullback $\C_{/\un k'}\times_{\C_{/\un k}} \C_{/\un n}$  
  if $i\upperstar\beta=f\lowershriek\alpha$, that is, $P$ is an actual 
  subposet of
  $P'$ and this diagram is a pullback:
  $$\xymatrix@R=3ex{
  P \ar[r]
  \ar[d]_\alpha \drpullback & P' \ar[dd]^\beta \\
  \un n \ar[d]_f  & \\
  \un k \ar@{ >->}[r]_i & \un k'   .
  }$$
  The claim is then that there is a unique way to 
  complete this diagram to
  $$\xymatrix{
  P \ar[r]
  \ar[d]_(0.5)\alpha \drpullback & P' \ar[dd]_(0.25)\beta|\hole \ar@{-->}[rd] &\\
  \un n \ar@{ >->}[rr]\ar[d]  && \un n' \ar[ld] \\
  \un k \ar@{ >->}[r] & \un k'  .
  }$$
  Indeed, at the level of elements, $P'$ is constituted by three subsets,
  namely the inverse images $P'_{\un a}$, $P'_{\un k}$ and $P'_{\un b}$.
  (We don't need to worry about the poset structure, since we already know
  all of $P'$.  The point is that the covariant functoriality does not
  change the total space.)  We now define $P'\to \un n'=\un a+\un n+\un b$
  as follows: we use $\beta$ to define $P'_{\un a} \to \un a$ and $P'_{\un
  b} \to \un b$ on the outer subsets, and on the middle subset we use
  $\alpha$ to define $P'_{\un k} = P\to \un n$.  Conversely, an
  element in $\C_{/\un n'}$ defines a element in the pullback, and
  it is clear that the two constructions are inverse to each other.  Having
  established that the two groupoids have the same objects, it remains to
  check that their automorphism groups agree.  An automorphism of a pair
  $(P'\xrightarrow\beta\un k',P\xrightarrow\alpha\un n)$ is an automorphism
  of $P'$ compatible with the $k'$-layering and whose restriction to $\un k$
  is furthermore compatible with the refined layering here, given by $P 
  \to \un n$.  But this is precisely to say that it is an
  automorphism of $P'$ that is compatible with the layering $P'\to \un
  n'$ constructed.
\end{proof}

\begin{prop}\label{prop:DRSp->iesq}
  There is a natural functor
  $$
  \kat{DRSp} \simeq\kat{RFib}_{/\C} \to \kat{IesqSesq} ,
  $$
  which takes a directed restriction species $R:\C\op\to\Grpd$ with
  associated 
%STRICT%   split 
  right fibration $\R\to\C$ to the comma category
  projection $\R \comma \un\simplexcategory \to \un\simplexcategory$.
\end{prop}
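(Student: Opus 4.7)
The strategy is to realize $\R \comma \un\simplexcategory$ as the pullback of the ambient two-sided fibration $\C \comma \un\simplexcategory$ against the right fibration $\R \to \C$, and then invoke the sesqui lemma directly.

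First I would set up the ambient two-sided fibration
$$
\xymatrix{
\C \comma \un\simplexcategory \ar[d]_p \ar[r]^-q & \C \\
\un\simplexcategory &
}
$$
formed over $\kat{Poset}$ via the canonical inclusions $\un\simplexcategory \hookrightarrow \kat{Poset}$ and $\C \hookrightarrow \kat{Poset}$. By general nonsense for comma squares, $(p,q)$ is a two-sided fibration, and Proposition~\ref{CDelta} tells us precisely that the left leg $p$ is an iesq sesquicartesian fibration. Since $w : \R \to \C$ is a right fibration, it is in particular a cartesian fibration (with groupoid fibres), so all hypotheses of Lemma~\ref{sesquilemma} are met.

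Next I would identify the pullback $(\C \comma \un\simplexcategory) \times_\C \R$ with $\R \comma \un\simplexcategory$. An object on the left consists of a poset map $P \to \un k$, an $R$-structure $X \in \R$, and an identification of $P$ with the underlying poset of $X$; morphisms are compatible squares. Unpacking the comma construction for $\R$, this is precisely an object of $\R \comma \un\simplexcategory$, and the projection to $\un\simplexcategory$ is the map $f$ of the sesqui lemma. Lemma~\ref{sesquilemma}(1) then yields that $\R \comma \un\simplexcategory \to \un\simplexcategory$ is an iesq sesquicartesian fibration, which is the main assertion of the proposition.

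For naturality in $\R$, a morphism $\phi : \R' \to \R$ in $\kat{RFib}_{/\C}$ induces, by functoriality of the comma construction, a functor $\R' \comma \un\simplexcategory \to \R \comma \un\simplexcategory$ over $\un\simplexcategory$. That it preserves cocartesian lifts and cartesian lifts over convex maps is immediate from the explicit descriptions of these lifts given in the proof of Proposition~\ref{CDelta}, using that $\phi$ commutes with the projections to $\C$. The remaining condition~\eqref{eq:subm} for this to constitute a morphism in $\kat{IesqSesq}$ is precisely the content of Lemma~\ref{sesquilemma}(2). The main potential obstacle is the clean identification of the pullback with $\R \comma \un\simplexcategory$, but since both comma categories are themselves defined as pullbacks along $\Ar(\kat{Poset}) \to \kat{Poset} \times \kat{Poset}$, this is simply an associativity-of-pullbacks verification, and so the proposition follows as a corollary of Proposition~\ref{CDelta} and Lemma~\ref{sesquilemma}.
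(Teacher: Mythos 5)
Your proposal is correct and follows essentially the same route as the paper: the paper's proof likewise realises $\R\comma\un\simplexcategory$ as the pullback of the comma square for $\C\comma\un\simplexcategory$ along the right fibration $\R\to\C$ (``stacking pullbacks on top of a comma square yields again comma squares'') and then cites Proposition~\ref{CDelta} together with Lemma~\ref{sesquilemma}~(1) for the statement on objects and Lemma~\ref{sesquilemma}~(2) for the statement on morphisms. Your explicit identification of the pullback via associativity of pullbacks over $\Ar(\kat{Poset})\to\kat{Poset}\times\kat{Poset}$ is exactly the content the paper compresses into that one phrase.
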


\begin{proof}
  Just note that stacking pullbacks on top of a comma square yields 
  again comma squares:
  $$\xymatrix{
  \R'\comma \unDelta \drpullback \ar[r] \ar[d] & \R' \ar[d] \\
  \R\comma \unDelta \drpullback \ar[r] \ar[d] & \R \ar[d] \\
  \C \comma \unDelta \ar[r]\ar[d] \ar@{}[rd]|{\Leftarrow} & \C \ar[d] \\
  \unDelta \ar[r] & \kat{Poset} .
  }$$
  Now $\C\comma\unDelta\to\unDelta$ is a iesq sesquicartesian fibration by
  Proposition~\ref{CDelta}, so the statement about objects follows
  from Lemma~\ref{sesquilemma}~(1) and the statement about morphisms
  from Lemma~\ref{sesquilemma}~(2).
\end{proof}

From these results for directed restriction species, the analogous 
results for ordinary restriction species can be deduced,
remembering from \ref{RSpsubDRSp} that $\I\to\C$ is a 
%STRICT% (split) 
right fibration.

\begin{cor}\label{IDelta}
  The projection $\I \comma \un\simplexcategory \to 
  \un\simplexcategory$ is a
%STRICT%   strictly 
  iesq sesquicartesian fibration.
\end{cor}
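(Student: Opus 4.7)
The plan is to deduce this corollary as a direct instance of Proposition~\ref{prop:DRSp->iesq} applied to the terminal ordinary restriction species, viewed as a directed restriction species.

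First, I would invoke \ref{RSpsubDRSp}, which observes that the natural functor $\I \to \C$ (sending a finite set to the corresponding discrete poset, and an injection to the corresponding convex map) is a right fibration. This is immediate: any convex map into a discrete poset automatically has discrete domain, and any injection of sets lifts uniquely to such a convex map, so the underlying-set functor from $\C_{/\text{discrete}}$ to $\I$ is an equivalence. Thus $\I \to \C$ qualifies as a directed restriction species in the sense of \S\ref{sec:DRSp}.

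Next, I would apply Proposition~\ref{prop:DRSp->iesq} to the right fibration $\I \to \C$. This proposition states that for any right fibration $\R \to \C$, the comma projection $\R \comma \un\simplexcategory \to \un\simplexcategory$ is an iesq sesquicartesian fibration. Taking $\R = \I$ gives precisely the claim that $\I \comma \un\simplexcategory \to \un\simplexcategory$ is iesq sesquicartesian. The only point to check is that the comma category $\I \comma \un\simplexcategory$ obtained directly (using the composite $\I \to \C \to \kat{Poset}$) agrees with the iterated pullback construction appearing in the proof of Proposition~\ref{prop:DRSp->iesq}, i.e.\ with $\I \times_\C (\C \comma \un\simplexcategory)$. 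This agreement is a standard property of comma squares: stacking a pullback on top of a comma square yields again a comma square, exactly as displayed in that proof.

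There is essentially no obstacle here; the corollary is really a formal consequence once one has recognised the restriction-species setting as sitting inside the directed one via \ref{RSpsubDRSp}. The only mild subtlety worth flagging is that the two sides of the equivalence $\I\comma\un\simplexcategory \simeq \I \times_\C (\C\comma\un\simplexcategory)$ should be identified carefully, but this is immediate from the pullback pasting lemma applied to the three-tier diagram at the end of the proof of Proposition~\ref{prop:DRSp->iesq}, with $\R$ and $\R'$ both replaced by $\I$.
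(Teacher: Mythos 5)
Your proposal is correct and follows exactly the route the paper intends: the corollary is stated immediately after the remark that the results for ordinary restriction species follow from those for directed ones ``remembering from \ref{RSpsubDRSp} that $\I\to\C$ is a right fibration,'' i.e.\ one applies Proposition~\ref{prop:DRSp->iesq} to the directed restriction species $\I\to\C$ and identifies $\I\comma\un\simplexcategory$ with $\I\times_\C(\C\comma\un\simplexcategory)$ by pasting of comma squares. No discrepancies.
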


\begin{cor}\label{cor:RSp->iesq}
  For any ordinary restriction species $R:\I\op\to\Grpd$ with
  associated 
%STRICT%   split 
  right fibration $\R\to\I$, the comma category
  projection $\R \comma \un\simplexcategory \to \un\simplexcategory$
  is a
%STRICT%   strictly 
  iesq sesquicartesian fibration.
\end{cor}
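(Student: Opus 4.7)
The plan is to follow, essentially verbatim, the argument of Proposition~\ref{prop:DRSp->iesq}, replacing $\C$ by $\I$ throughout. All the required ingredients are already at hand: Corollary~\ref{IDelta} asserts that $\I\comma\un\simplexcategory \to \un\simplexcategory$ is an iesq sesquicartesian fibration; Lemma~\ref{sesquilemma}(1) says that iesq sesquicartesian fibrations are stable under pullback along cartesian fibrations; and by hypothesis $\R\to\I$ is a right fibration, hence in particular a cartesian fibration.

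Concretely, I would form the stacked diagram
$$\xymatrix{
\R\comma\un\simplexcategory \drpullback \ar[r]\ar[d] & \R \ar[d] \\
\I\comma\un\simplexcategory \ar[r]\ar[d] \ar@{}[rd]|{\Leftarrow} & \I \ar@{=}[d] \\
\un\simplexcategory \ar[r] & \I ,
}$$
where the lower square is the defining comma square and the top square is a pullback. The identification of this pullback with the comma category $\R\comma\un\simplexcategory$ uses the standard fact that a pullback stacked on top of a comma square is again a comma square: objects of $(\I\comma\un\simplexcategory)\times_\I \R$ are pairs $(A\to\un n,\,X)$ consisting of a layered finite set together with an $R$-structure lifting $A$, which is precisely an object of $\R\comma\un\simplexcategory$, and morphisms match for the same reason.

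Having made this identification, Lemma~\ref{sesquilemma}(1), applied to the two-sided fibration $\I\comma\un\simplexcategory \to \un\simplexcategory\times\I$ and to the cartesian fibration $\R\to\I$, immediately yields that $\R\comma\un\simplexcategory \to \un\simplexcategory$ is an iesq sesquicartesian fibration, which is the desired conclusion. I do not foresee a genuine obstacle: the argument is a direct transcription of Proposition~\ref{prop:DRSp->iesq}, and the sole new prerequisite, Corollary~\ref{IDelta}, is itself an instance of the directed case---since $\I\to\C$ is a right fibration (cf.~\ref{RSpsubDRSp}), the category $\I$ may be viewed as a directed restriction species, and the comma $\I\comma_{\C}\un\simplexcategory$ over $\C$ coincides with $\I\comma\un\simplexcategory$ because convex maps between discrete posets are precisely injections. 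The one bookkeeping point worth verifying carefully is that the cocartesian/cartesian splittings of the two comma constructions agree under this identification, but this is routine.
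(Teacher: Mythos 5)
Your argument is correct and essentially the paper's own: the paper simply applies Proposition~\ref{prop:DRSp->iesq} to the composite right fibration $\R\to\I\to\C$, while you factor the same pullback through the intermediate stage $\I\comma\un\simplexcategory$ (Corollary~\ref{IDelta}) and invoke Lemma~\ref{sesquilemma} once more, which amounts to the same thing since $(\C\comma\un\simplexcategory)\times_{\C}\R\simeq(\I\comma\un\simplexcategory)\times_{\I}\R$. The only slip is in your diagram: the base of the comma square must be $\kat{Set}$ (or $\kat{Poset}$) rather than $\I$, since there is no forgetful functor $\un\simplexcategory\to\I$ (monotone maps need not be injective); with that cosmetic fix the identification $(\I\comma\un\simplexcategory)\times_{\I}\R\simeq\R\comma\un\simplexcategory$ and the rest of the argument go through exactly as you describe.
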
 

Proposition~\ref{prop:DRSp->iesq}, together with
Theorem~\ref{thm:iesqsesqui->decomp} (that is,
Propositions~\ref{prop:Nabla-to-Decomp} and \ref{prop:iesqsesqiNabla}),
gives the following result, summarising our constructions so far.

\begin{theorem}\label{thm:RSP&DRSp->DS}
  The constructions above define functors
  $$
  \kat{RSp} 
  \xrightarrow{\ref{RSpsubDRSp}} 
  \kat{DRSp} 
  \xrightarrow{\ref{prop:DRSp->iesq}} 
  \kat{IesqSesq}
  \xrightarrow{ \ref{prop:iesqsesqiNabla}} 
  \Fun_{\operatorname{iesq}}^{\operatorname{culf}}(\newnabla,\Grpd)
  \xrightarrow{\ref{prop:Nabla-to-Decomp}} 
  \kat{Decomp}^{\mathrm{culf}} .
  $$
\end{theorem}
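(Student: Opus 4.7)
The plan is to read the theorem as a formal composition assertion: each of the four arrows in the displayed chain is exactly the functor provided by the proposition cited above it, and composition of functors is again a functor. The whole proof therefore amounts to walking once through the chain, checking that targets and sources match up and that no compatibility condition is dropped at any interface.

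Concretely, I would unfold the four constructions in order. The first arrow $\kat{RSp} \to \kat{DRSp}$ sends $\R \to \I$ to the composite $\R \to \I \to \C$, which is again a right fibration because $\I \to \C$ is one (by~\ref{RSpsubDRSp}) and right fibrations compose. The second arrow $\kat{DRSp} \to \kat{IesqSesq}$ is the comma construction $\R \mapsto \R \comma \unDelta$ of Proposition~\ref{prop:DRSp->iesq}. The third arrow $\kat{IesqSesq} \to \Fun^{\operatorname{culf}}_{\operatorname{iesq}}(\newnabla,\kat{Cat})$ is the unstraightening of Proposition~\ref{prop:iesqsesqiNabla}. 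The final arrow $\Fun^{\operatorname{culf}}_{\operatorname{iesq}}(\newnabla,\Grpd) \to \kat{Decomp}^{\operatorname{culf}}$ is restriction along $\gamma : \simplexcategory\op \to \newnabla$ (Proposition~\ref{prop:Nabla-to-Decomp}).

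The only interface that deserves a second look is the implicit passage from $\kat{Cat}$-valued to $\Grpd$-valued $\newnabla$-diagrams, which connects Propositions~\ref{prop:iesqsesqiNabla} and~\ref{prop:Nabla-to-Decomp}; this step is the fibrewise maximal-subgroupoid (core) functor, exactly as recorded in Theorem~\ref{thm:iesqsesqui->decomp}. Since the core is right adjoint to the inclusion $\Grpd \hookrightarrow \kat{Cat}$, it preserves homotopy pullbacks (iso-commas), and hence preserves both the iesq condition on objects and the \culf condition on morphisms, since both are defined as certain squares being homotopy pullbacks. There is no genuine obstacle in the proof; it is purely a bookkeeping statement, and its role is to collect in one place the cascade of functors established piecewise in Sections~\ref{sec:RSp}--\ref{sec:RSp->iesq}.
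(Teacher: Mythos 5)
Your proposal is correct and matches the paper's own treatment: the theorem is stated as a summary, obtained by composing the functors of \ref{RSpsubDRSp}, Proposition~\ref{prop:DRSp->iesq}, Proposition~\ref{prop:iesqsesqiNabla} and Proposition~\ref{prop:Nabla-to-Decomp}, with the fibrewise maximal-subgroupoid functor inserted exactly where you place it (as recorded just before Theorem~\ref{thm:iesqsesqui->decomp}). One small caveat: being a right adjoint does not by itself guarantee preservation of \emph{homotopy} pullbacks, but the conclusion stands because the core functor sends an iso-comma square in $\kat{Cat}$ to the iso-comma square of the cores, so the iesq and \culf conditions are indeed preserved.
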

These functors are not exactly fully faithful but we shall see in the next 
section that they become fully faithful when suitably sliced.

\begin{blanko}{Unpacking, and comparison with the discussion in \S\ref{sec:DRSp}.}\label{unsubmersive} 
  Given a directed restriction species $R:\C\op\to\Grpd$, we may consider
  the associated right fibration $p:\R\to\C$ as a morphism in
  $\kat{RFib}_{/\C}$ from $p$ to the terminal object $\C\to\C$.
  Theorem~\ref{thm:RSP&DRSp->DS} then associates to this a decomposition
  space $\ds R:\simplexcategory\op\to\Grpd$ with a \culf functor
  $\Psi(p):\ds R\to\ds C$, constructed via iesq-sesqui and nabla spaces.

  Indeed, we have a functor
  $$
  \Psi:\kat{RFib}_{/\C}\to\kat{Decomp}_{/\ds C}^{\operatorname{culf}}
  $$
  to the category of decomposition spaces which are \culf over $\ds C$.
   
  Let us unpack the constructions.  Consider the pullback of $p$ to the
  comma categories
  $$
  \xymatrix{ \R_{/\un n}\rto\dto\drpullback & 
  \R\comma \unDelta \drpullback \ar[r] \ar[d] & \R \ar[d]^p \\
  \C_{/\un n}\rto \drpullback \ar[d] &  \C \comma \unDelta \ar[r]\ar[d] 
  \ar@{}[rd]|{\Leftarrow}& \C \ar[d]\\
  1 \ar[r]_{\name{\un n}} &  \unDelta  \ar[r] & \kat{Poset} .
  }$$
  The values of the simplicial groupoids $\ds R$ and $\ds C$ at $[n]$, are
  groupoid interiors of the fibres over $\un n\in\un\simplexcategory$,
  $$
  \ds C_n=( \C\comma \unDelta)^\iso_{\un n}=\C^\iso_{/\un n},\qquad
  \ds R_n=( \R\comma \unDelta)^\iso_{\un n}=\R^\iso_{/\un n}
  =\C^\iso_{/\un n}\times_{\C^\iso}\R^\iso  ,
  $$
  and $\Psi(p)_n:\ds R_n\to \ds C_n$ is the canonical projection.
  The simplicial structure is given as follows:
  \begin{itemize}
 
	\item A generic map $g:[n]\genmap[k]$ in $\simplexcategory$ and 
  the corresponding $\un g:\un k\to \un n$ in $\un\simplexcategory$
  induce, by postcomposition, the map of groupoids 
  $$
  \ds C_k\to\ds C_n
  ,\qquad(P\to\un k)\mapsto\un g\lowershriek(P\to \un k)=(P\to\un k\to \un n).
  $$
  This in turn induces the map $\ds R_k\to\ds R_n$,
  $$\xymatrix{
  \ds R_k\drpullback\rto\dto&\ds R_n\drpullback\rto\dto&\ds R_1\dto\\
  \ds C_k           \rto    &\ds C_n           \rto    &\ds C_1.}
  $$
  and hence the projection $\ds R\to \ds C$ is cartesian on generic maps.

  \item A free map $f:[n]\rat[k]$ in $\simplexcategory$ and the associated 
  convex map $\un f:\un n\rat\un k$ in $\un\simplexcategory$ induce, 
  by pullback, the homomorphism
  $$
  \ds C_{k}\to\ds C_n,\qquad(P\to\un{k})\mapsto(\un f\upperstar P\to \un n).
  $$
  The definition of $\ds R_k\to\ds R_n$
  uses the directed restriction species structure,
  $$
  \ds C_k\times_{\ds C_1}\ds R_1\longrightarrow
  \ds C_n\times_{\ds C_1}\ds R_1,\qquad
  (P\to\un k,\;S)\longmapsto
  (f\upperstar P\to\un n,\,(S|f\upperstar P)\,) .
  $$
  \end{itemize}
\end{blanko}

%%%%%%%%%%%%%%%%%%%%%%%%%%%%%%%%%%%%%%%%%%%%%%%%%%
\section{Decalage and fully faithfulness}
%%%%%%%%%%%%%%%%%%%%%%%%%%%%%%%%%%%%%%%%%%%%%%%%%%

We have already exploited (Proposition \ref{prop:Dec(C)}) the decalage formulae 
  $$
  \Dec_\bot(\ds C) \simeq \fatnerve \C^\low
  \qquad
  \Dec_\top \ds C \simeq \fatnerve (\C^\upper)\op
  $$
  which we now generalise as follows.  For each directed restriction
  species $R$, we can pull back the corresponding right fibration $\R\to\C$
  to these subcategories of upper- and lower-set inclusions, giving
  $$
  \R^\low := \C^\low \times_{\C} \R, \qquad \qquad 
  \R^\upper := \C^\upper \times_{\C} \R, 
  $$
  the categories of $R$-structures and their lower-set and upper-set
  inclusions.  Thus we have pullback functors
  $$
  \kat{RFib}_{/\C^\low}    
  \xleftarrow{\operatorname{pbk}}\kat{RFib}_{/\C}\xrightarrow{\operatorname{pbk}}
  \kat{RFib}_{/\C^\upper} .
  $$

\begin{prop}\label{prop:Dec-of-DRSp}
  We have the following natural (levelwise)
  equivalences of simplicial groupoids:
  $$
  \Dec_\bot \ds R \simeq \fatnerve \R^\low 
  \qquad
  \qquad
  \Dec_\top \ds R \simeq \fatnerve (\R^\upper)\op    .
  $$
\end{prop}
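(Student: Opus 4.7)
The plan is to mimic the construction in Proposition \ref{prop:Dec(C)}, adding the $R$-structure via the right fibration $\R \to \C$. The key observation is that $\R^\low \to \C^\low$, being the pullback of the right fibration $\R \to \C$ along $\C^\low \hookrightarrow \C$, is itself a right fibration of categories, so $\fatnerve \R^\low \to \fatnerve \C^\low$ is a right fibration of Segal spaces.

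Starting from the pullback description $\ds R_n \simeq \ds C_n \times_{\ds C_1} \ds R_1$ from \ref{unsubmersive}, I compute
$$(\Dec_\bot \ds R)_n = \ds R_{n+1} \simeq \C^\iso_{/\un{n+1}} \times_{\C^\iso} \R^\iso \simeq \Map([n], \C^\low) \times_{\C^\iso} \R^\iso,$$
applying Proposition \ref{prop:Dec(C)} in the last step, with the map $\Map([n], \C^\low) \to \C^\iso$ given by evaluation at the top vertex $P_{\un{n+1}} = P$. On the other hand, the right-fibration property of $\fatnerve \R^\low \to \fatnerve \C^\low$ yields
$$(\fatnerve \R^\low)_n \simeq \Map([n], \C^\low) \times_{\C^\iso} \R^\iso,$$
matching the previous formula. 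Explicitly, the resulting levelwise equivalence sends a pair $(P \to \un{n+1}, X \in R[P])$ to the chain of $R$-restrictions along lower-set inclusions
$$X|P_{\un 1} \hookrightarrow X|P_{\un 2} \hookrightarrow \cdots \hookrightarrow X|P_{\un n} \hookrightarrow X,$$
obtained via the (essentially unique) cartesian lifts of the convex inclusions $P_{\un k} \hookrightarrow P_{\un{k+1}}$.

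Next, I would verify compatibility with face and degeneracy maps, following the corresponding check for $\ds C$ in Proposition \ref{prop:Dec(C)}: an inner face map $d_{i+1}$ of $\ds R_{n+1}$ merges adjacent layers, corresponding to omitting the intermediate poset from the chain (with no effect on the $R$-structure $X$, since the total poset does not change); the top face $d_{n+1}$ discards the last layer, truncating the chain from the top and simultaneously restricting $X$ from $P$ to $P_{\un n}$; degeneracies correspond to inserting identities into the chain. The $R$-structure transports coherently by functoriality of $R$.

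The upper Dec case is entirely analogous, using upper-set inclusions in place of lower-set inclusions; the $(-)\op$ appears exactly as in Proposition \ref{prop:Dec(C)}, because under $\Dec_\top$ the kept vertex corresponds to the smallest subposet in the chain, reversing the direction of the inclusions. The principal obstacle is the simplicial bookkeeping, but since the $\ds C$-case is already established and the $R$-structure transports along the relevant convex maps by functoriality of $R$, no new difficulties arise.
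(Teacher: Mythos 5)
Your proposal is correct and follows essentially the same route as the paper: the paper also reduces to the pullback description $\ds R_{k+1}\simeq \ds C_{k+1}\times_{\ds C_1}\ds R_1$, invokes Proposition~\ref{prop:Dec(C)} for the base $\ds C$, identifies $(\fatnerve\R^\low)_k\simeq(\fatnerve\C^\low)_k\times_{\ds C_1}\ds R_1$ via the right-fibration property, and then checks compatibility with the face maps (the only difference being that the paper packages this computation as the left-hand faces of a larger commutative cube, Lemma~\ref{lem:cube}, which it reuses later for fully faithfulness).
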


\begin{proof}
  The equivalences are expressed by commutativity of the left-hand faces
  (incident with the edge labelled by the functor $\Psi:\R\mapsto\ds R$) of
  the cube in the following lemma.
\end{proof}

\begin{lemma}\label{lem:cube}
  We have the commutative diagram
  $$\xymatrix @!=15pt {
  &&\kat{RFib}_{/\C^\upper} \ar[d]^{\fatnerve} \ar[rrrrd]&&&& \\
  \kat{RFib}_{/\C}\ar[ddd]_\Psi\ar[rru]\ar[rrrrd]&&  {\scriptsize 
  \kat{RFib}_{/\fatnerve\C^\upper}} \ar@{=}[d]|(0.5)\hole &&&&\kat{Fib}_{/\C^\iso} 
  \ar[ddd]^{\fatnerve}\\
  &&  {\scriptsize \kat{RFib}_{/(\Dec^\top \!\ds C) \op}} \ar@{=}[d]    &&\kat{RFib}_{/\C^\low}\ar[dd]^{\fatnerve}\ar[rru]&&\\
  &&\kat{LFib}_{/\Dec_\top \!\ds C} \ar[rrrrd]|(0.5)\hole &&&&\\
  \kat{Decomp}_{/\ds C}^{\operatorname{culf}} 
  \ar[rru]^{\Dec_\top}\ar[rrrrd]_{\Dec_\bot}&&&&{\scriptsize\kat{RFib}_{/\fatnerve{\C^\low}}}\ar@{=}[d]&&\kat{Cart}_{/\ds C_1}\\
  &&&&\kat{RFib}_{/\Dec_\bot \!\ds C} \ar[rru]&&
  }$$
\end{lemma}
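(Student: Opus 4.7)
The plan is to reduce commutativity of the cube to the two natural levelwise equivalences
\[
\Dec_\bot \ds R \simeq \fatnerve \R^\low \qquad\text{and}\qquad \Dec_\top \ds R \simeq \fatnerve (\R^\upper)\op,
\]
which are precisely the content of Proposition~\ref{prop:Dec-of-DRSp}. The top face of the cube (pullback functors along $\C^\iso \into \C^\low \into \C$ and symmetrically for upper sets) commutes by pasting of pullback squares in $\kat{Cat}$, and the outer $\fatnerve$-arrows commute with pullback by functoriality. The equalities labelling intermediate vertices of the cube are instances of Proposition~\ref{prop:Dec(C)} (supplying $\Dec_\bot \ds C \simeq \fatnerve \C^\low$ and $\Dec_\top \ds C \simeq \fatnerve(\C^\upper)\op$) combined with the canonical identification $\kat{RFib}_{/X\op} \simeq \kat{LFib}_{/X}$. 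The genuine new content is therefore the two central equivalences, naturally in~$\R$.

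I would focus on the lower case; the upper case is entirely analogous, and the appearance of $\op$ is explained exactly as in Proposition~\ref{prop:Dec(C)}. The strategy is to combine the pullback description of $\ds R$ from~\ref{unsubmersive} with the already-known case of $\ds C$. For each $n \geq 0$ one has
\[
(\Dec_\bot \ds R)_n = \ds R_{n+1} \simeq \ds C_{n+1} \times_{\ds C_1} \R^\iso = (\Dec_\bot \ds C)_n \times_{\ds C_1} \R^\iso,
\]
where $\ds C_1 = \C^\iso$ and the projection $\ds C_{n+1} \to \ds C_1$ is the total inner (generic) face map, sending a layering $P \to \un{n+1}$ to its underlying poset $P$. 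By Proposition~\ref{prop:Dec(C)}, the first factor is equivalent to $\Map([n], \C^\low)$, the map to $\C^\iso$ being evaluation of each chain of lower-set inclusions $P_0 \into \cdots \into P_n$ at its top object $P_n$.

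The next step is to identify this pullback with $(\fatnerve \R^\low)_n = \Map([n], \R^\low)$. Since $\R^\low \to \C^\low$ is a pullback of the right fibration $\R \to \C$, it is itself a right fibration, and $(\R^\low)^\iso \simeq \R^\iso$ because isomorphisms in $\R^\low$ are just isomorphisms in $\R$. A functor $[n] \to \R^\low$ is therefore determined, up to canonical isomorphism, by its projection to $\C^\low$ together with a lift of the top object to $\R^\iso$; equivalently, there is a canonical equivalence
\[
\Map([n], \R^\low) \simeq \Map([n], \C^\low) \times_{\C^\iso} \R^\iso,
\]
matching the pullback above. Naturality in $\R$ is immediate: a morphism $\R' \to \R$ of right fibrations over $\C$ pulls back to a morphism $\R'{}^\low \to \R^\low$ and induces the corresponding map of pullback squares.

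The main obstacle I anticipate is verifying that these levelwise equivalences assemble into an equivalence of simplicial groupoids, compatibly with the rest of the cube. The simplicial compatibility reduces to comparing face and degeneracy maps on the two sides: on $\Dec_\bot \ds R$ they are inherited from the comma-category presentation of Proposition~\ref{prop:DRSp->iesq} (inner face maps by postcomposition in the layering; outer face maps by restriction along convex inclusions, invoking the right-fibration structure of $\R \to \C$), while on $\fatnerve \R^\low$ they come from functoriality of $[n]$-diagrams. Matching these under the pullback-over-$\ds C$ presentation reduces to the already-established match for $\ds C$ (Proposition~\ref{prop:Dec(C)}) together with pseudo-functoriality of the unique-cartesian-lift construction for the right fibration $\R^\low \to \C^\low$. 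Once this compatibility is pinned down, the remaining faces of the cube commute by construction, being instances of pullback functoriality or of $\fatnerve$ applied to pullbacks of right fibrations.
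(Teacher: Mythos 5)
Your proposal is correct and follows essentially the same route as the paper: both identify $(\Dec_\bot\ds R)_n$ and $(\fatnerve\R^\low)_n$ with the same fibre product $(\Dec_\bot\ds C)_n\times_{\ds C_1}\R^\iso$ via Proposition~\ref{prop:Dec(C)} and the pullback description of $\ds R$, check compatibility with the face and degeneracy maps, and dispose of the top, bottom and right-hand faces by pullback pasting and the fact that the fat nerve preserves pullbacks. One small caution: in the paper Proposition~\ref{prop:Dec-of-DRSp} is \emph{deduced from} this lemma, so it should not be cited here --- but since you in fact prove the two central equivalences directly rather than invoking it, your argument is not circular.
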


\begin{proof}
  We first prove that the left-hand faces commute.
  In simplicial degree zero the images of $p:\R\to\C$ clearly coincide: they are
  $p^\iso:\ds R_1=\R^\iso\to \ds C_1=\C^\iso$.  Analogously to
  \ref{unsubmersive} 
  we can write
 \begin{align*}
   (\Dec_\bot\ds R)_k&=\ds R_{k+1}
     =\ds C_{k+1}\times_{\ds C_1}\ds R_1
  =(\Dec_\bot\ds C)_{k}\times_{\ds C_1}\ds R_1
  \\
   (\fatnerve \R^\low)_k&=(\fatnerve \C^\low)_k\times_{(\fatnerve \C^\low)_0}(\fatnerve \R^\low)_0
               =(\fatnerve \C^\low)_k\times_{\ds C_1}\ds R_1  ,
  \end{align*}
  and similarly for $\Dec_\top$ and the categories of upper-set
  inclusions.  From Proposition~\ref{prop:Dec(C)} we have canonical equivalences
  of simplicial groupoids $\Dec_\bot\ds C=\fatnerve \C^\low$ and
  $\Dec_\top\ds C=\fatnerve (\C^\upper)\op$.  We also have commuting
  diagrams for generic or bottom face maps
  $$\xymatrix{
  (\Dec\ds R)_{k}
  \rto^-=\dto&  \Dec(\ds C)_{k}\times_{\ds C_1}\ds R_1
  \dto\rto^-\cong & (\fatnerve \C^\low)_k\times_{\ds C_1}\ds R_1\rto^-=\dto&
  (\fatnerve \R^\low )_k\dto
  \\
  (\Dec\ds R)_{n}
  \rto^- =&\Dec(\ds C)_{n}\times_{\ds C_1}\ds R_1  
  \rto^-\cong & (\fatnerve \C^\low )_n\times_{\ds C_1}\ds R_1\rto^-=&(\fatnerve \R^\low )_n
  }$$
  The diagram for $d_\top:[k{-}1]\to[k]$ also commutes:
  $$
  \xymatrix{
  \left({\begin{array}{c}\scriptstyle 
  P\\[-1.5mm]\downarrow\\[-1.6mm]\scriptstyle\un{k{+}1}\end{array}}
  , S \right)\qquad
  \ar@{<->}[r]\ar@{|->}[d]&\ar@{|->}[d] 
  \left(P_{\un 1}\subseteq P_{\un2}\subseteq\dots\subseteq P, S 
  \right)\\
    \left({\begin{array}{c}\scriptstyle {\un 
	d^\top}^{\!*\!\!}P\\[-1.5mm]\downarrow\\[-1.6mm]\scriptstyle\un{k}\end{array}}
    , (S|{\un d^\top}\upperstar \!P)\right)
  \ar@{<->}[r] &   \left( P_{\un1}\subseteq\dots\subseteq P_{\un k},\, 
  (S|P_{\un k}) \right). 
  }
  $$
  This shows that the two left-hand faces commute.
  
  The top face is just pullback to $\C^\iso$ taken in two steps in two
  ways.  For the bottom face, observe first that $\ds C_1$ is the constant
  simplicial groupoid with value $\ds C_1 = \C^\iso$.  The bottom face
  commutes because both ways around send a \culf map $\ds R \to \ds C$ to
  the (obviously cartesian) simplicial map of constant simplicial groupoids
  $\ds R_1 \to \ds C_1$.  The right-hand faces are easier to understand
  with $\kat{RFib}_{\fatnerve \C^\upper}$ instead of
  $\kat{LFib}_{/\Dec_\top \ds C}$ and $\kat{RFib}_{\fatnerve \C^\low}$
  instead of $\kat{RFib}_{/\Dec_\bot \ds C}$: commutativity of the two
  squares then just amounts to the fact that the fat nerve commutes with
  pullbacks.
\end{proof}

Since ordinary restriction species are just directed restriction 
species supported on discrete posets, Proposition~\ref{prop:Dec-of-DRSp}
implies the following
result, remembering that for discrete posets, every inclusion is
both a lower-set and an upper-set inclusion:
\begin{cor}\label{cor:DecR=NR}
  For an ordinary restriction species $\R\to\I$ with associated decomposition space 
  $\ds R$, we have
  $$
  \Dec_\bot(\ds R) \simeq \fatnerve \R \qquad
  \Dec_\top(\ds R) \simeq \fatnerve \R\op .
  $$
\end{cor}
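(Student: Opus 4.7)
The plan is to deduce Corollary~\ref{cor:DecR=NR} from Proposition~\ref{prop:Dec-of-DRSp} by reinterpreting an ordinary restriction species as a directed restriction species supported on discrete posets, as set up in~\ref{RSpsubDRSp}. Since the functor $\I\to\C$ is a right fibration, every ordinary restriction species $\R\to\I$ yields a directed restriction species by composition $\R\to\I\to\C$, and the decomposition space $\ds R$ constructed via Theorem~\ref{DRisDS} agrees with the one built in Theorem~\ref{RisDS} (this is essentially by construction, since the layered-sets and layered-discrete-posets groupoids coincide, and the restriction along a convex inclusion between discrete posets is the same as restriction along an injection).

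The main observation is then the following: a convex map between discrete posets is automatically both a lower-set inclusion and an upper-set inclusion, since the defining condition ``$x\le b$ in $P$ and $b\in L$ imply $x\in L$'' (and its dual) are vacuous when $P$ carries the discrete order. Consequently, under the identification $\I\subset\C$ of discrete posets, the subcategories $\C^\low$, $\C^\upper$ and $\C$ all restrict to the same thing on the image: namely $\I$ itself. Pulling back, this gives the identifications
$$
\R^\low \;=\; \C^\low \times_\C \R \;\simeq\; \R, \qquad
\R^\upper \;=\; \C^\upper \times_\C \R \;\simeq\; \R,
$$
naturally in the restriction species.

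Feeding these identifications into Proposition~\ref{prop:Dec-of-DRSp} gives precisely
$$
\Dec_\bot(\ds R)\;\simeq\;\fatnerve \R^\low \;\simeq\;\fatnerve \R,
\qquad
\Dec_\top(\ds R)\;\simeq\;\fatnerve (\R^\upper)\op \;\simeq\;\fatnerve \R\op,
$$
as required. The only subtle point that one might want to spell out is the compatibility of the two constructions $\Psi$ (ordinary vs.~directed) along $\kat{RSp}\hookrightarrow\kat{DRSp}$, but this is already implicit in Theorem~\ref{thm:RSP&DRSp->DS}, where the chain of functors factors through this inclusion; so there is no genuine obstacle, only a bookkeeping check that the equivalences $\R^\low\simeq\R\simeq\R^\upper$ are respected by the cube of Lemma~\ref{lem:cube} when specialised to the image of $\I\to\C$.
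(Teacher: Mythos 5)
Your proposal is correct and follows essentially the same route as the paper: the authors likewise deduce the corollary from Proposition~\ref{prop:Dec-of-DRSp} by observing that an ordinary restriction species is a directed restriction species supported on discrete posets, for which every inclusion is simultaneously a lower-set and an upper-set inclusion, so that $\R^\low\simeq\R\simeq\R^\upper$. Your extra remark about compatibility of the two constructions along $\kat{RSp}\hookrightarrow\kat{DRSp}$ is exactly the bookkeeping the paper leaves implicit via Theorem~\ref{thm:RSP&DRSp->DS}.
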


\begin{theorem}\label{thm:DRSp->decomp/C=ff}
   The functor
   $$
   \Psi: \kat{DRSp} \longrightarrow \kat{Decomp}^{\operatorname{culf}}_{/\ds C}
   $$
   is fully faithful.
\end{theorem}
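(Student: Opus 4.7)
The plan is to prove fully faithfulness by reducing a CULF morphism $F \colon \ds R \to \ds R'$ in $\kat{Decomp}^{\operatorname{culf}}_{/\ds C}$ to its level-one component and matching the resulting compatibility conditions with the naturality conditions defining a morphism in $\kat{RFib}_{/\C}$.

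Since $\ds R$ and $\ds R'$ are both \culf over $\ds C$, the CULF pullback squares give $\ds R_n \simeq \ds R_1 \times_{\ds C_1} \ds C_n$ and similarly for $\ds R'$. A map $F$ over $\ds C$ is therefore determined levelwise by its level-one component $F_1 \colon \ds R_1 \to \ds R'_1$, via $F_n \simeq F_1 \times_{\ds C_1} \id_{\ds C_n}$. Compatibility of such an $F$ with inner face maps and degeneracies is automatic from this formula, while compatibility with the outer face maps $d_\bot, d_\top$ translates into naturality conditions on $F_1$. Unpacking the pullback construction of $d_\top \colon \ds R_2 \to \ds R_1$ from the definitions of Sections~\ref{sec:C}--\ref{sec:DRSp}, the condition $d_\top \circ F_2 = F_1 \circ d_\top$ is equivalent to $F_1(X|_L) = F_1(X)|_L$ for every $R$-structure $X$ on $P$ and every lower-set $L \subset P$; dually, $d_\bot$-compatibility gives naturality on upper-set inclusions. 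Higher-level outer face conditions reduce to these level-two conditions by iteration.

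Every convex inclusion $K \subset P$ factors as an upper-set inclusion $K \subset L$ followed by a lower-set inclusion $L \subset P$ (take $L$ to be the down-closure of $K$ in $P$). Therefore naturality of $F_1$ on $\C^\low$ and $\C^\upper$ extends to naturality on all of $\C$: given such a convex $K\subset P$ with a factorization through $L$, one computes
$$F_1(X|_K) = F_1\bigl((X|_L)|_K\bigr) = F_1(X|_L)|_K = \bigl(F_1(X)|_L\bigr)|_K = F_1(X)|_K,$$
using the two partial naturalities in turn, and this is independent of the chosen factorization. Hence the data of $F$ is equivalent to that of a natural transformation $f \colon \R \to \R'$ over $\C$, i.e., a morphism in $\kat{RFib}_{/\C}$. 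Conversely, every such $f$ yields $\Psi(f)$ via the construction of~\ref{unsubmersive}, and the two assignments are mutually inverse (the level-one match being immediate by construction), establishing fully faithfulness.

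The main technical point is the translation between commutativity of $F$ with the outer face maps of $\ds R, \ds R'$ and naturality of $F_1$ with respect to the two generating classes of inclusions in $\C$; this rests on the explicit pullback descriptions of $d_\bot, d_\top$ in Sections~\ref{sec:C} and~\ref{sec:DRSp}. Once these translations are in hand, the factorization-system argument and the check that the result is independent of the choice of intermediate $L$ are routine.
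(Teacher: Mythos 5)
Your proof is correct and reaches the same combinatorial crux as the paper's --- that every convex inclusion of posets factors as an upper-set inclusion followed by a lower-set inclusion (the paper takes the up-closure, you the down-closure; either works) --- but it gets there by a genuinely different route. The paper's proof is categorical bookkeeping on top of the cube of Lemma~\ref{lem:cube}: it factors $\Psi$ through the pair of decalage functors $(\Dec_\bot,\Dec_\top)$, uses the identifications $\Dec_\bot\ds R\simeq\fatnerve\R^\low$ and $\Dec_\top\ds R\simeq\fatnerve(\R^\upper)\op$ of Proposition~\ref{prop:Dec-of-DRSp}, and then combines joint fully-faithfulness of the two pullback functors to $\kat{RFib}_{/\C^\low}$ and $\kat{RFib}_{/\C^\upper}$ with fully-faithfulness of the fat nerve and faithfulness of the Dec pair. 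You instead unpack everything by hand: the \culf condition over $\ds C$ gives $\ds R_n\simeq\ds C_n\times_{\ds C_1}\ds R_1$, so a morphism over $\ds C$ is determined by $F_1$; compatibility with generic maps is then automatic, and compatibility with $d_\top$ (resp.\ $d_\bot$) is exactly naturality of $F_1$ on lower-set (resp.\ upper-set) inclusions, which the factorization upgrades to naturality on all convex maps. Your version is more elementary and self-contained, needing only the explicit face-map description of \ref{unsubmersive} rather than the decalage formulas; the paper's version reuses machinery it establishes anyway and keeps the coherence questions localized inside the fat-nerve and Dec functors. The one point you should not wave away too quickly is that in the groupoid-valued (pseudo) setting your naturality ``equalities'' are really specified isomorphisms, so reconstructing the naturality datum of a general convex map from the two partial naturalities requires checking that different factorizations yield the same isomorphism; this is routine (compare an arbitrary factorization with the canonical closure), and the paper's proof implicitly relies on the same verification when it asserts that the pair of pullback functors is jointly fully faithful.
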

\begin{proof}
  From the cube diagram in Lemma~\ref{lem:cube} we get the commutative
  square
    $$\xymatrix{
  \kat{RFib}_{/\C} \ar[rr]^-{(\mathrm{pbk},\mathrm{pbk})} \ar[d]_-\Psi 
  &&
  \kat{RFib}_{/\C^\low} \!\!\underset{\kat{Fib}_{/\C^\iso}}\times \!\!
  \kat{RFib}_{/\C^\upper} \ar[d]^-{\fatnerve \underset{\fatnerve}\times \fatnerve }
   \\
    \kat{Decomp}^{\operatorname{culf}}_{/\ds C}  \ar[rr]_-{(\Dec_\bot, \Dec_\top)}  
  &&
    \kat{RFib}_{/\Dec_\bot \ds C} \!\underset{\kat{Cart}_{/\ds C_1}}\times 
	\!
  \kat{LFib}_{/\Dec_\top \ds C}   \; .
  }$$
  Now the main point is that
  the pair of pullback functors is
  jointly fully faithful.  Indeed, a transformation 
  is natural in all convex maps if and only if it is natural
  in both lower-set 
  inclusions and upper-set inclusions, since every convex inclusion
  factors (non-uniquely) as a lower-set inclusion followed by an 
  upper-set inclusion.  Since also the fibre product of fat nerves is 
  fully faithful, and since the pair of
  Decs is faithful, we conclude that $\Psi$ is fully faithful.
\end{proof}

\begin{thm}\label{thm:RSp->decomp/I=ff}
  The functor 
  $$
  \kat{RSp} \longrightarrow \kat{Decomp}_{/\ds I}^{\operatorname{culf}} 
  $$
  is fully faithful.
\end{thm}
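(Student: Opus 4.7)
The plan is to mimic the proof of Theorem~\ref{thm:DRSp->decomp/C=ff}, simplified by using only the lower decalage. The crucial observation is that for ordinary restriction species the underlying posets are discrete, so every injection is trivially both a lower-set and an upper-set inclusion; consequently $\R^\low = \R^\upper = \R$, and Corollary~\ref{cor:DecR=NR} delivers $\Dec_\bot \ds R \simeq \fatnerve \R$ directly. Thus there is no need to take a fibre product of two decalages as in the directed case.

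Concretely, I would assemble the commutative square
$$
\xymatrix{
\kat{RSp} = \kat{RFib}_{/\I} \ar[rr]^-{\fatnerve} \ar[d]_-\Psi
&&
\kat{RFib}_{/\fatnerve \I}
\ar@{=}[d]
\\
\kat{Decomp}^{\operatorname{culf}}_{/\ds I} \ar[rr]_-{\Dec_\bot}
&&
\kat{RFib}_{/\Dec_\bot \ds I}
}
$$
whose commutativity records Corollary~\ref{cor:DecR=NR} (upper path) and the identification $\Dec_\bot \ds I \simeq \fatnerve \I$ of Proposition~\ref{prop:dec-bb-is-ii} (right edge). The top composite $\R\mapsto\fatnerve\R$ is fully faithful, since the fat nerve $\kat{Cat}\to\Grpd^{\simplexcategory\op}$ is fully faithful, and this property restricts to slices and further to the subcategory of right fibrations.

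Next, I would verify that the bottom functor $\Dec_\bot$ is faithful on morphisms. This is a direct consequence of naturality: given a simplicial map $f\colon \ds R\to\ds R'$, knowledge of $\Dec_\bot f = (f_1,f_2,\dots)$ together with the simplicial identity $d_0\circ s_0=\id$ and the naturality square $s_0\circ f_0 = f_1\circ s_0$ forces $f_0 = d_0\circ f_1\circ s_0$, so $f_0$ is determined by $f_1$. Hence a morphism in $\kat{Decomp}^{\operatorname{culf}}_{/\ds I}$ is recovered from its lower Dec.

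From these two ingredients the result follows by a formal diagram chase: if the composite around the top is fully faithful and the bottom right arrow $\Dec_\bot$ is faithful, then the left vertical arrow $\Psi$ is fully faithful. The step requiring the most care is checking that the identifications of fat nerves and decalages really do commute coherently on morphisms, not only on objects; but this is already the content (in the ordinary-restriction-species case) of Lemma~\ref{lem:cube} specialised to the discrete-poset fibre, so no new work is needed.
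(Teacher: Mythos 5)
Your proof is correct, but it takes a genuinely different route from the paper's. The paper disposes of this theorem in three lines by a purely formal argument: it places the functor in the commutative square
$$\xymatrix{
\kat{RSp} \ar[r] \ar[d] & \kat{Decomp}^{\operatorname{culf}}_{/\ds I} \ar[d] \\
\kat{DRSp} \ar[r] & \kat{Decomp}^{\operatorname{culf}}_{/\ds C}
}$$
and observes that the composite around the bottom is fully faithful (by the inclusion $\kat{RSp}\subset\kat{DRSp}$ and Theorem~\ref{thm:DRSp->decomp/C=ff}), while the right-hand vertical is fully faithful because $\ds I \to \ds C$ is a monomorphism in $\kat{Decomp}^{\operatorname{culf}}$; fully faithfulness of the top arrow follows by cancellation. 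You instead rerun the decalage argument of Theorem~\ref{thm:DRSp->decomp/C=ff} directly over $\ds I$, using your correct observation that for discrete posets every injection is simultaneously a lower-set and an upper-set inclusion, so that $\R^\low=\R^\upper=\R$, the pullback step is trivial, and a \emph{single} Dec already detects everything (whereas the directed case genuinely needs the pair, since naturality in convex maps must be reassembled from naturality in lower-set and upper-set inclusions separately). Your final cancellation step (top composite fully faithful, $\Dec_\bot$ faithful via $f_0=d_0\circ f_1\circ s_0$, hence $\Psi$ fully faithful) is the same formal pattern the paper uses in the directed case and is sound. What the paper's route buys is brevity and no need to re-examine the commutativity of the Dec/fat-nerve square over $\ds I$; what your route buys is independence from Theorem~\ref{thm:DRSp->decomp/C=ff} and a clearer explanation of why the undirected case is simpler. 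The one point to state explicitly rather than wave at is that the commutativity of your square on morphisms over $\ds I$ (not merely over $\ds C$) does follow from Lemma~\ref{lem:cube}, using that $\I\to\C^\low$ is fully faithful; as you note, no new computation is required.
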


\begin{proof}
  In the commutative diagram
$$\xymatrix{
\kat{RSp} \ar[r] \ar[d]_{\operatorname{f.f.}} &
 \kat{Decomp}^{\operatorname{culf}}_{/\ds I} \ar[d]^{\operatorname{f.f.}} \\
\kat{DRSp} \ar[r]_-{\operatorname{f.f.}} & \kat{Decomp}^{\operatorname{culf}}_{/\ds C}
}$$
$\kat{RSp} \subset \kat{DRSp}$ is clearly fully faithful; 
$\kat{DRSp} \to
\kat{Decomp}^{\operatorname{culf}}_{/\ds C} $ is fully faithful by 
Theorem~\ref{thm:DRSp->decomp/C=ff}, and 
$\kat{Decomp}^{\operatorname{culf}}_{/\ds I}\to 
	\kat{Decomp}^{\operatorname{culf}}_{/\ds C}$
	is fully faithful since 
	$\ds I \to \ds C$ is a monomorphism in 
  $\kat{Decomp}^{\operatorname{culf}}$.
\end{proof}

%%%%%%%%%%%%%%%%%%%%%%%%%%%%%%%%%%%%%%%%%%%%%%%%%%
\section{Remarks on strictness}
%%%%%%%%%%%%%%%%%%%%%%%%%%%%%%%%%%%%%%%%%%%%%%%%%%

\label{sec:strict}

Since our general philosophy is that the homotopy content
is the essence---and in the end we want to take homotopy cardinality 
anyway---we have worked in this paper with groupoids up to homotopy:
when we say simplicial groupoid, we mean pseudo-functor 
$\simplexcategory\op\to \Grpd$, and all pullbacks mentioned are
homotopy pullbacks.

Nevertheless, one may rightly feel that it is nicer to work with strict
simplicial objects.  In the present situation one can actually have a
strict version of everything, if just restriction species and
directed restriction species are assumed to be {\em strict} groupoid-valued
functors, not pseudo-functors (and their morphisms {\em strict} natural 
transformations 
rather than pseudo-natural transformations).  It is doable to trace through all the
construction with sufficient care to ensure that the resulting
decomposition spaces are again strict.

We finish the paper by outlining the arguments going into this.  First of
all: 

\begin{blanko}{Strict decomposition spaces.}
  We define {\em strict decomposition spaces} to be strict functors
  $\simplexcategory\op\to\Grpd$ such that the generic-free squares are
  simultaneously strict pullbacks and homotopy pullbacks.
\end{blanko}
Note that the squares in question are already strictly commutative since
they are strict simplicial identities, so in practice the pullback
condition happens because it is a strict pullback in which one of the legs
is an iso-fibration.

For example, the fat nerve of a small category is a strict decomposition
space: it is clearly a strict functor, the Segal squares are readily seen
to be strict pullbacks, and the face maps
are iso-fibrations because the coface
maps in $\simplexcategory$ are injective on objects.

\begin{blanko}{Strict \culf functors.}
  We define a {\em strict \culf functor} to be a strictly simplicial map, whose
  naturality squares on generic maps are simultaneously strict pullbacks
  and homotopy pullbacks.
\end{blanko}
Again, this typically happens when the simplicial map is degree-wise an
iso-fibration.

\begin{theorem}
  The functors $\kat{RSp} \to \kat{DRSp} \to \kat{Decomp}^{\mathrm{culf}}$ of
  Theorem~\ref{thm:RSP&DRSp->DS} take strict (directed) restriction species and
  their strict morphisms to strict decomposition spaces and strict \culf
  functors.
\end{theorem}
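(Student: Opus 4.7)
The plan is to verify that strictness is preserved at each step of the composition
$$\kat{RSp} \to \kat{DRSp} \to \kat{IesqSesq} \to \Fun^{\operatorname{culf}}_{\operatorname{iesq}}(\newnabla,\Grpd) \to \kat{Decomp}^{\operatorname{culf}}$$
of Theorem~\ref{thm:RSP&DRSp->DS}. The first inclusion $\kat{RSp}\to\kat{DRSp}$ is precomposition with the right fibration $\I\to\C$ and manifestly preserves strict functors and strict natural transformations.

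For $\kat{DRSp}\to\kat{IesqSesq}$, the key observation is that a strict directed restriction species $R:\C\op\to\Grpd$ corresponds under the strict Grothendieck construction to a right fibration $\R\to\C$ equipped with a canonical (split) cleavage: the cartesian lift at $X\in R[P]$ of a convex map $f:Q\to P$ is simply $R(f)(X)\to X$. Forming the comma $\R\comma\unDelta$, the cocartesian fibration over $\unDelta$ is automatically split (a property of any comma category), and the cartesian structure over convex maps inherits a splitting from that of $\R\to\C$, provided the pullbacks in $\kat{Poset}$ afforded by Lemma~\ref{pbkconv} are chosen to be actual subset inclusions. With these strict choices, the Beck--Chevalley comparisons for identity-extension squares become equalities, and the iesq squares become strict pullbacks.

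For $\kat{IesqSesq}\to\Fun^{\operatorname{culf}}_{\operatorname{iesq}}(\newnabla,\Grpd)$, a split sesquicartesian fibration gives rise to a strict nabla space rather than a pseudo-functor, because the reindexing functors now compose on the nose. Taking maximal sub-groupoid fibrewise preserves this strictness, as well as the strict iesq pullbacks. Finally, precomposition with $\gamma:\simplexcategory\op\to\newnabla$ in the last step visibly preserves both strict functoriality and strict pullbacks, so the output is a strict simplicial groupoid whose generic-free squares are strict pullbacks.

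The main obstacle, and the technical heart of the proof, is the upgrading of these strict pullbacks to homotopy pullbacks, as required by the definition of strict decomposition space given just before the theorem. For this I would invoke the iso-fibration criterion: it suffices to show that the outer face maps $d_\bot,d_\top:\ds R_k\to\ds R_{k-1}$ are iso-fibrations of groupoids. This can be verified directly from the explicit construction: given an iso $(P',\ell',X|P')\isopil(Q',\mu',Y')$ downstairs, extend the poset isomorphism by the identity on the layer that was cut off to produce an iso $P\isopil Q$ over $\un k$, and lift the iso of $R$-structures using the chosen cleavage of $R$ on the inclusion $P'\hookrightarrow P$ and the strict naturality of restriction. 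Since in every generic-free square of $\ds R$ at least one leg is such an outer face map (or more generally a pullback along a subset inclusion, which is likewise an iso-fibration), the strict pullback coincides with the homotopy pullback. The same iso-fibration argument applied levelwise shows that a strict morphism $R'\to R$ induces a strictly simplicial map $\ds R'\to\ds R$ whose generic-map naturality squares are strict pullbacks with an iso-fibration leg, hence a strict \culf functor in the sense defined.
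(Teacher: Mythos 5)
Your proposal follows essentially the same route as the paper's own outline: strictness is traced through $\kat{IesqSesq}$ and the nabla-space functor, using the splitness of comma-category projections and the choice of actual subset inclusions for the pullbacks of convex maps, and the resulting strict pullbacks are identified with homotopy pullbacks via the iso-fibration criterion. The only cosmetic difference is that the paper (in the proof of Proposition~\ref{CDelta} it points back to) exhibits the fibration leg as the covariant reindexing functor between slices, whereas you verify the iso-fibration property on the outer face maps; either leg suffices.
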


Let us explain the main intermediate step.

\begin{blanko}{Strictly iesq sesquicartesian fibrations.}
  A sesquicartesian fibration is {\em split} when there are specified
  functorial cocartesian lifts for all maps and specified functorial
  cartesian lifts for convex maps, and such that the Beck--Chevalley
  isomorphisms are strict identities.  A split sesquicartesian fibration is
  {\em strictly iesq} when the strictly commutative Beck--Chevalley squares
  are both strict pullbacks and homotopy pullbacks.  A strict morphism of
  strictly iesq sesquicartesian fibrations is by definition a functor that
  preserves the specified lifts, both cocartesian and cartesian, and for
  which the square~\eqref{eq:subm} is both a strict pullback and a homotopy
  pullback.
\end{blanko}

\begin{lemma}
  The functors $\kat{RSp} \to \kat{DRSp} \to \kat{IesqSesq}$ of
  Proposition~\ref{prop:DRSp->iesq} take strict (directed) restriction species
  and their strict morphisms to strictly iesq sesquicartesian fibrations and
  strict morphisms.
\end{lemma}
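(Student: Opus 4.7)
The plan is to trace through the construction of Proposition~\ref{prop:DRSp->iesq} and verify that with canonical choices of representatives at every step, the splittings become strict, the Beck--Chevalley isomorphisms become strict identities, and the iesq squares become simultaneous strict and homotopy pullbacks. The starting point is the base case $\C\comma\unDelta \to \unDelta$, and the general case is then obtained by strict pullback along $\R \to \C$.

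For the base case, one chooses strict splittings as follows. The cocartesian lift of $f:\un n\to\un k$ at an object $(P\xrightarrow{\alpha}\un n)$ is the canonical square with $P$ unchanged and bottom arrow $f$, giving objectwise composition-equal-to-composition identities. The cartesian lift over a convex map $i:\un k\rat\un k'$ applied to $(P'\xrightarrow{\beta}\un k')$ is taken to be the actual subposet $\beta^{-1}(\operatorname{im}\,i)\subseteq P'$ (so the cartesian arrow is a literal inclusion square). These choices compose strictly, since strict preimages compose strictly, and the Beck--Chevalley squares become strict identities by the explicit computation already carried out in the proof of Proposition~\ref{CDelta}. That same proof exhibits the iesq square as a strict pullback of groupoids; moreover, the vertical legs (the reindexing functors between slices over $\unDelta$) are iso-fibrations because their fibres are discrete choices of subset-inclusion data, so the strict pullback coincides with the homotopy pullback.

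Given a strict directed restriction species $p:\R\to\C$, one forms $\R\comma\unDelta$ as the strict pullback $\C\comma\unDelta\times_\C\R$. A strict right fibration admits a strict cleavage (choose, for each object of $\R$ and each arrow in $\C$ out of its image, a distinguished lift), and pairing it with the splitting above yields a strict splitting of $\R\comma\unDelta\to\unDelta$: cocartesian lifts are cocartesian in $\C\comma\unDelta$ paired with identities in $\R$, and cartesian lifts over convex maps are cartesian lifts in $\C\comma\unDelta$ paired with the chosen $p$-cartesian lifts in $\R$. Since the iesq squares of $\C\comma\unDelta\to\unDelta$ remain simultaneous strict and homotopy pullbacks after applying $-\times_\C\R$ (using that the iso-fibration property is preserved under strict pullback of groupoid-valued functors), the resulting comma projection is strictly iesq. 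For a strict morphism $\R\to\R'$ over $\C$, the induced map $\R\comma\unDelta\to\R'\comma\unDelta$ preserves the designated lifts on the nose, because strict morphisms of right fibrations preserve chosen cartesian lifts by definition; the square~\eqref{eq:subm} is a strict pullback because it is obtained by strict pullback from the defining squares over $\C$, and is a homotopy pullback since the downward maps are iso-fibrations.

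For ordinary restriction species, the inclusion $\kat{RSp}\hookrightarrow\kat{DRSp}$ (\ref{RSpsubDRSp}) manifestly preserves strictness of functors and transformations, so the result for $\kat{RSp}\to\kat{IesqSesq}$ follows formally from the directed case. The main technical point, and the one requiring the most care, is to verify in the base case that the cartesian reindexing functors $i^*:\C^\iso_{/\un k'}\to\C^\iso_{/\un k}$ (and likewise their $\R$-analogues) are genuinely iso-fibrations when $i^*$ is defined by literal subposet inclusion; this is what allows one to identify the strict pullback squares of Proposition~\ref{CDelta} with the corresponding homotopy pullbacks, and is the sole place where the homotopical content of "strictly iesq" intervenes beyond straightforward bookkeeping.
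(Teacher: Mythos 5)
Your proof is correct and follows essentially the same route as the paper's (much terser) argument: the base case $\C\comma\unDelta\to\unDelta$ is handled by inspecting the proof of Proposition~\ref{CDelta}, exploiting that pullbacks along convex maps can be chosen to be actual subset inclusions so that the splittings and Beck--Chevalley identities are strict, and the general case follows by strict pullback, using that comma-category projections are split (co)cartesian fibrations and that a strict right fibration carries a canonical splitting. One correction to your closing paragraph, though: the identification of the strict iesq pullbacks with homotopy pullbacks does \emph{not} rest on the upperstar functors $i^*:\C^\iso_{/\un k'}\to\C^\iso_{/\un k}$ being iso-fibrations --- with $i^*$ defined by literal preimage this is neither clear nor needed. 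What is actually used (in the proof of Proposition~\ref{CDelta}, and correctly in your own second paragraph) is that the \emph{lowershriek} legs, i.e.\ the postcomposition functors $f_!$, are (iso-)fibrations, so that the strict pullback of the cospan $\C^\iso_{/\un k'}\xrightarrow{\;i^*\;}\C^\iso_{/\un k}\xleftarrow{\;f_!\;}\C^\iso_{/\un n}$ is already a homotopy pullback; likewise, for the squares~\eqref{eq:subm} it is the vertical projections coming from right fibrations that are iso-fibrations. So the ``main technical point'' you isolate at the end is a red herring; the rest of your argument already contains the correct justification.
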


The main ingredient in checking this is the fact that the base case 
$\C \comma \unDelta 
\to \unDelta$ is a strictly iesq sesquicartesian fibration.  This follows
from inspection of the proof of Proposition~\ref{CDelta}, where in fact
the crucial pullback square was established as a strict pullback along an 
iso-fibration.  For this we exploited in particular that the pullbacks
of convex maps can be taken to be actual subset inclusions.

For the general strict directed restriction species (which includes $\I$),
the proof follows from niceness of comma categories, including the
fact that comma-category projections are always split cartesian and 
cocartesian fibrations, 
and therefore the top squares in the proof of Proposition~\ref{prop:DRSp->iesq}
can be taken to be strict pullbacks.

\bigskip

Finally, it is straightforward to verify that all the strictnesses are preserved 
by the functor of Proposition~\ref{prop:iesqsesqiNabla}
to (suitably strict) nabla spaces, and from there to strict decomposition spaces
via Proposition~\ref{prop:Nabla-to-Decomp}.

\bigskip

We stress that for the sake of taking homotopy cardinality to obtain
incidence coalgebras, the strictness is irrelevant.

% \bibliographystyle{../scplain}
% \bibliography{../GKT1}

\end{document}